\newenvironment{propositionproof}[1]{\noindent{\bf Proof of Proposition#1:\@}}{\hfill $\square$\\}
\newenvironment{claimproof}[1]{\noindent{\bf Proof of Claim #1:\@}}{\hfill $\square$\\}
\newcommand\nix{\,\cdot\,}
\newcommand\normA{\norm{\rho_{\nix\star}}_2}
\newcommand\normB{\norm{\rho_{\star\nix}}_2}
\newcommand\normC{\norm{\rho}_2}
\newcommand\Forb{\cF}
\newcommand{\beq}{\begin{equation}} \newcommand{\eeq}{\end{equation}}
\newcommand\gnp{G(n,p)}
\newcommand\gnm{\cG(n,m)}
\numberwithin{equation}{section}
\newcommand\br[1]{\left(#1\right)}
\def\vec#1{\mathchoice{\mbox{\boldmath$\displaystyle#1$}}
{\mbox{\boldmath$\textstyle#1$}}
{\mbox{\boldmath$\scriptstyle#1$}}
{\mbox{\boldmath$\scriptscriptstyle#1$}}}
\newcommand{\Ztame}{\widetilde Z^s_{k,\omega, \nu}}
\newcommand{\Zrho}{Z_{k,\rho}}
\newcommand{\Zw}{Z_{k,\omega}}
\newcommand{\Zwni}{Z_{k,\omega, \nu}^s}
\newcommand{\Zwnie}{Z_{k,\omega, \nu, \eta}^{s\,(2)}}
\newcommand{\Zwnis}{Z_{k,\omega, \nu, n^{-3/8}}^{s\,(2)}}
\newcommand{\Aw}{\cA_{k,\w}(n)}
\newcommand{\Awni}{\cA_{k,\w,\nu}^s(n)}
\newcommand{\Bw}{\cB_{k,\w}(n)}
\newcommand{\Bwni}{\cB_{k,\w,\nu}^s(n)}
\newcommand{\Bwnie}{\cB_{k,\w, \nu, \eta}^s(n)}
\newcommand{\Bwnis}{\cB_{k,\w, \nu, n^{-3/8}}^s(n)}
\newcommand{\Bwnisr}{\cB_{k,\w, \nu, n^{-3/8}}^s(n, \rho^0, \rho^1)}
\newcommand{\rwnik}{\rho^{k,\w,\nu,s}}
\newcommand{\Swn}{S_{k,\w,\nu}}
\newcommand{\dc}{d_{k,\mathrm{cond}}}
\newcommand{\dk}{d_{k,\mathrm{col}}}
\newcommand\COMB{Combinatorica}
\DeclareMathOperator{\pr}{\mathbb P}
\newtheorem{definition}{Definition}[section]
\newtheorem{claim}[definition]{Claim}
\newtheorem{remark}[definition]{Remark}
\newtheorem{theorem}[definition]{Theorem}
\newtheorem{lemma}[definition]{Lemma}
\newtheorem{proposition}[definition]{Proposition}
\newtheorem{corollary}[definition]{Corollary}
\newtheorem{fact}[definition]{Fact}
\newcommand\cA{\mathcal{A}}
\newcommand\cB{\mathcal{B}}
\newcommand\cC{\mathcal{C}}
\newcommand\cF{\mathcal{F}}
\newcommand\cG{\mathcal{G}}
\newcommand\cE{\mathcal{E}}
\newcommand\cH{\mathcal{H}}
\newcommand\cS{\mathcal{S}}
\newcommand\cV{\mathcal{V}}
\newcommand\cZ{\mathcal{Z}}
\def\cC{{\mathcal C}}
\def\cE{{\mathcal E}}
\newcommand\eps{\varepsilon}
\newcommand\Var{\mathrm{Var}}
\newcommand\Erw{\mathbb{E}}
\newcommand\w{{\omega}}
\newcommand{\Po}{{\rm Po}}
\newcommand{\bink}[2] {{{#1}\choose {#2}}}
\newcommand\ra{\rightarrow}
\newcommand\bc[1]{\left({#1}\right)}
\newcommand\cbc[1]{\left\{{#1}\right\}}
\newcommand\bcfr[2]{\bc{\frac{#1}{#2}}}
\newcommand{\bck}[1]{\left\langle{#1}\right\rangle}
\newcommand\brk[1]{\left\lbrack{#1}\right\rbrack}
\newcommand\scal[2]{\bck{{#1},{#2}}}
\newcommand\norm[1]{\left\|{#1}\right\|}
\newcommand{\whp}{w.h.p.}
\newcommand{\Erdos}{Erd\H{o}s}
\newcommand{\Renyi}{R\'enyi}
\newcommand\Lem{Lemma}
\newcommand\Prop{Proposition}
\newcommand\Thm{Theorem}
\newcommand\Cor{Corollary}
\newcommand\Sec{Section}
\begin{document}

\title{On the number of solutions in random graph $k$-colouring}

\author[Felicia Rassmann]{Felicia Rassmann$^*$}
\thanks{$^*$The research leading to these results has received funding from the European Research Council under the European Union's Seventh Framework Programme (FP/2007-2013) / ERC Grant Agreement n.\ 278857--PTCC}
\date{\today}

\address{Felicia Rassmann, {\tt rassmann@math.uni-frankfurt.de}, Goethe University, Institute of Mathematics, 10 Robert-Mayer-Str, Frankfurt 60325, Germany.}

\begin{abstract}
\noindent
Let $k \ge 3$ be a fixed integer. We exactly determine the asymptotic distribution of $\ln Z_k(G(n,m))$, where $Z_k(G(n,m))$ is the number of $k$-colourings of the random graph $G(n,m)$. A crucial observation to this aim is that the fluctuations in the number of colourings can be attributed to the fluctuations in the number of small cycles in $G(n,m)$. Our result holds for a wide range of average degrees, and for $k$ exceeding a certain constant $k_0$ it covers all average degrees up to the so-called {\em condensation phase transition}.

\noindent
\end{abstract}

\maketitle
\section{Introduction}

\subsection{Background and motivation}
Going back to the ground-breaking paper of \Erdos\ and \Renyi~\cite{ER} in 1960, the study of the random graph colouring problem has attained a lot of attention and innumerable articles have been published in this area of research over the years. In the most frequently studied model, a random graph $G(n,m)$ on the vertex set $\brk n=\cbc{1,\ldots,n}$ with precisely $m$ edges is drawn uniformly at random from all such graphs. 

A question that has turned out to be a very challenging one is how to choose $n$ and $m$ to obtain a random graph that is colourable \whp. Or, put differently, whether a random graph with given $n$ and $m$ can be coloured with a fixed number of colours, thus determining its chromatic number.

Beginning in the 1990s, considerable progress has been made in the case of {\em sparse} random graphs, where $m=O(n)$ as $n\ra\infty$. Much effort has been devoted to studying the typical value of the chromatic number of $G(n,m)$~\cite{AchNaor,BBColor,LuczakColor,Matula} and its concentration~\cite{AlonKriv,Luczak,ShamirSpencer}. Several experiments and simulations led to the hypothesis, that, when changing the ratio of edges to variables, there is a transition from a regime where the random graph is colourable \whp~to the one where it is not \whp. Furthermore, the observation was that this transition does not happen smoothly, suggesting the existence of a sharp satisfiability threshold. Indeed, in 1999, Achlioptas and Friedgut~\cite{AchFried} proved the existence of a {\em sharp threshold sequence} $\dk(n)$ for any $k\geq3$, meaning that for any fixed $\eps>0$ the random graph $G(n,m)$ is $k$-colourable \whp~if $2m/n<\dk(n)-\eps$, whereas $G(n,m)$ fails to be $k$-colourable \whp~if $2m/n>\dk(n)+\eps$. This threshold sequence is non-uniform, i.e.~it is a function of $n$ and although it is broadly believed to converge for $n$ tending to infinity, this has not been established up to now. 
Also, in spite of continued efforts, the exact value of this threshold remains unknown up to date. The best current bounds~\cite{Covers,Danny} on $\dk(n)$ show that there is a sequence $(\gamma_k)_{k\geq3}$, $\lim_{k\ra\infty}\gamma_k=0$,
such that
\begin{equation*} 
(2k-1)\ln k-2\ln2-\gamma_k\leq\liminf_{n\ra\infty}\dk(n)\leq\limsup_{n\ra\infty}\dk(n)\leq(2k-1)\ln k-1+\gamma_k.
\end{equation*}  

Yet, there exist predictions by statistical physicists regarding the precise location of this threshold. They developed a method called {\em cavity method} that allowed them to gain insights into the combinatorial structure of the random graph colouring problem and to understand the significance of {\em typical} $k$-colourings, i.e.~$k$-colourings chosen uniformly at random from the set of all $k$-colourings, on both the combinatorial and algorithmic aspects of the problem \cite{pnas}. What is more, this method has also been used to predict a further phase transition shortly before the colouring threshold. This transition $\dc$ has been named {\em condensation} and its existence and location have rigorously been determined in 2014 by Bapst et al.~\cite{Cond}.
Under the assumption that $k\geq k_0$ for a certain constant $k_0$ it is possible to calculate the number $\dc$ precisely~\cite{Cond}, and an asymptotic expansion in $k$ yields
\begin{align*}\label{eq_dc}
\dc&=(2k-1)\ln k-2\ln2+\gamma_k,\qquad\mbox{ where }\lim_{k\ra\infty}\gamma_k=0.
\end{align*}

The condensation transition plays a very important role for several reasons. It marks the point where the behaviour of the number of solutions changes significantly, as does the geometry of the solution space~\cite{Barriers,Molloy}. The prediction states that while two $k$-colourings chosen uniformly at random tend to be uncorrelated before the condensation threshold, they typically exhibit long-range correlations afterwards~\cite{Reconstr}. Furthermore, the condensation transition persists, in contrast to the colourability transition, also for finite inverse temperatures \cite{BCOR}.
In recent work, it has been proved that the condensation transition is also related to the information theoretic threshold in the stochastic block model \cite{BanksMoore, NeemanNetra}, where it marks the point from which on it is possible to decide whether a random graph has been drawn from a planted distribution or not.

By obtaining an exact expression for the asymptotic distribution of the logarithm of the number of solutions up to the condensation threshold $\dc$, in the present paper we give a definite and complete answer to the question about the relationship between the planted model and the Gibbs distribution. Furthermore, we show that the fluctuations in the number of solutions can completely be attributed to the presence of short cycles, thereby eliminating the possibility of other influencing factors.  

For a graph $G$ on $n$ vertices, we let $Z_k(G)$ be the number of {\em k-colourings} (also called {\em solutions}) of $G$, which are maps $\sigma: \brk{n} \to \brk{k}$ such that $\sigma(i) \ne \sigma(j)$ for all edges $\{i,j\}$ of $G$. 
We always consider {\em sparse} random graphs $G(n,m)$ where $m=O(n)$. As we are going to need a very precise computation of the first and second moment of the number of $k$-colourings of $G(n,m)$, we distinguish the parameter $d^{\prime}$, which is such that $m = \lceil d^{\prime}n/2 \rceil$,
from $d=2m/n$, which arises naturally in the computations of the first and second moment. We note that $d^{\prime} \sim d$, although $d = d(n)$ might vary with $n$, whereas $d^{\prime}$ is assumed to be fixed as $n\to\infty$.

\subsection{Results}

We show that under certain conditions the number $Z_k(G(n,m))$ of $k$-colourings of the random graph is concentrated tightly and determine the distribution of $\ln Z_k(G(n,m))-\ln \Erw{\brk{Z_k(G(n,m))}}$ asymptotically in a density regime up to the condensation transition.  

Before we state the result, we introduce the following notation. For $k\geq3$, we define
	\begin{equation} \label{eq_dc}
	\dc=\sup\cbc{d^{\prime}>0:\liminf_{n\ra\infty}\Erw\brk{Z_k(G(n,m))^{1/n}}=k(1-1/k)^{d^{\prime}/2}}.
	\end{equation}
This definition is motivated by the well-known fact that
	\begin{align*}
	\Erw\brk{Z_k(G(n,m))}=\Theta\br{k^n(1-1/k)^{m}}.
	\end{align*}
Jensen's inequality shows that $\limsup_{n\ra\infty}\Erw\brk{Z_k(G(n,m))^{1/n}}\leq k(1-1/k)^{d^{\prime}/2}$ for all $d^{\prime}$ and this upper bound is tight up to the density $\dc$.

\begin{theorem}\label{Thm_main}
There is a constant $k_0>3$ such that the following is true.
Assume either  that $k\geq3$ and $d^{\prime}\leq 2(k-1)\ln(k-1)$ or that $k\geq k_0$ and $d^{\prime}<\dc$. Further, let 
\begin{align*}
\lambda_l=\frac{d^l}{2l}\quad\mbox{ and }\quad\delta_l=\frac{(-1)^l}{(k-1)^{l-1}}
\end{align*}
for $l\ge 2$. Let $(X_l)_l$ be a family of independent Poisson variables with $\Erw[X_l]=\lambda_l$, all defined on the same probability space. Then the random variable
$$W=\sum_{l\ge 3}X_l\ln(1+\delta_l)-\lambda_l\delta_l-d^2/(4(k-1))$$
satisfies $\Erw |W|<\infty$ and $\ln Z_k(G(n,m))-\ln \Erw{\brk{Z_k(G(n,m))}}$ converges in distribution to $W$.
\end{theorem}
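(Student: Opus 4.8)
The precise form of $W$ --- a convergent series $\sum_{l\ge3}\bigl(X_l\ln(1+\delta_l)-\lambda_l\delta_l\bigr)$ in independent Poisson variables, plus the deterministic constant $-d^2/(4(k-1))$ --- is the hallmark of the \emph{small subgraph conditioning} method, with the bounded‑length cycles of $G(n,m)$ in the role of the distinguished subgraphs. Write $Z=Z_k(G(n,m))$ and let $C_l$ be the number of $l$‑cycles. The plan is to show that once one conditions on $(C_l)_{3\le l\le L}$ the quantity $\ln Z$ concentrates, so that its only source of fluctuation is the cycle counts, and that $(C_l)_l$ converges jointly to independent $\Po(\lambda_l)$. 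This rests on two sharp moment computations. First, the \emph{first moment}: splitting $Z=\sum_\sigma\mathbf 1\{\sigma\text{ proper}\}$ according to the colour‑class sizes of $\sigma$, the probability that a fixed $\sigma$ is proper is an explicit ratio of binomial coefficients; expanding its logarithm and summing the resulting Gaussian over the $\sqrt n$‑scale class‑size fluctuations by a local central limit argument gives $\Erw[Z]=(1+o(1))\,K(d',k)\,k^n(1-1/k)^m$ for an explicit $K(d',k)$ --- this is where factors such as $e^{d/2}$, $e^{-d^2/(4(k-1))}$ and $(1+\tfrac{d}{k-1})^{-(k-1)/2}$ first appear, and where one tracks the harmless relation $d=2m/n=d'+O(1/n)$. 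Second, the \emph{second moment}: parametrising pairs $(\sigma,\tau)$ by their overlap matrix one has $\Erw[Z^2]=\sum_{\sigma,\tau}\pr[\sigma,\tau\text{ proper}]$, whose exponential order is governed by a rate function on overlaps; in the whole range $d'\le2(k-1)\ln(k-1)$ (any $k\ge3$), respectively $d'<\dc$ (for $k\ge k_0$), this rate function is maximised precisely at the uncorrelated overlap $\rho=\tfrac1k\matone$ --- the classical second‑moment estimate for $k$‑colourability in the first case \cite{AchNaor}, and the content of the condensation analysis of \cite{Cond} in the second --- and a Laplace expansion around $\tfrac1k\matone$ yields $\Erw[Z^2]$ up to a $1+o(1)$ factor. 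In particular $\Erw[Z^2]=O(\Erw[Z]^2)$, so $Z=\Omega(\Erw[Z])$ \whp{} by Paley--Zygmund.

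Next I would establish the cycle statistics. On the one hand, $(C_3,C_4,\dots)$ converges jointly to independent $\Po(\lambda_l)$ in $G(n,m)$, which is classical. On the other hand, in the \emph{planted model} --- $G(n,m)$ conditioned on a fixed colouring $\sigma$ being proper, i.e.\ a uniformly random $m$‑edge subgraph of the complete $k$‑partite graph on the colour classes of $\sigma$ --- the number of $l$‑cycles converges to $\Po(\lambda_l(1+\delta_l))$, because every candidate $l$‑cycle must be properly coloured by $\sigma$ and the number of proper $k$‑colourings of the cycle $C_l$ equals $(k-1)^l+(-1)^l(k-1)=(k-1)^l(1+\delta_l)$. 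Summing over $\sigma$, this turns into the statement that the joint factorial moments of $Z$ against the cycle counts have the product form $\Erw\bigl[Z\prod_l(C_l)_{a_l}\bigr]/\Erw[Z]\to\prod_l\bigl(\lambda_l(1+\delta_l)\bigr)^{a_l}$, where $(x)_a=x(x-1)\cdots(x-a+1)$, required by the method. Combining with the first moment gives $\Erw\bigl[Z\mid(C_l)_{l\le L}=(c_l)_{l\le L}\bigr]/\Erw[Z]\to\prod_{l=3}^{L}(1+\delta_l)^{c_l}e^{-\lambda_l\delta_l}$: conditioning on the short cycles reweights the annealed count by exactly the ``$X_l$'' part of $W$.

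It remains to show that, conditionally on $(C_l)_{l\le L}$, $\ln Z$ concentrates around $\ln\Erw\bigl[Z\mid(C_l)_{l\le L}\bigr]-\tfrac{d^2}{4(k-1)}$; equivalently, $Z/\Erw[Z]$ converges in distribution to $e^W$. The deterministic shift $-\tfrac{d^2}{4(k-1)}$ records a genuine quenched--annealed gap: $\Erw[e^W]=e^{-d^2/(4(k-1))}<1$, so $Z/\Erw[Z]$ is \emph{not} uniformly integrable, $\Erw[Z]$ is inflated by atypically colourful graphs, and the bulk of the distribution of $\ln Z$ sits this far below $\ln\Erw[Z]$, the remainder of the Jensen gap being the cycle contribution. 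To pin this down one must compute the second moment not merely up to a constant but asymptotically exactly, and do so conditionally on the short cycles; the device for this is to restrict to \good{} colourings --- those that are suitably balanced and whose interaction with the bounded‑length cycles is typical --- and work with the truncated partition function $\Zgood\le Z$. One shows $\Erw[\Zgood]=(1+o(1))\Erw[Z]$ and $\Zgood=(1+o(1))Z$ \whp{} (using $Z=\Omega(\Erw[Z])$ and Markov's inequality on $\Erw[Z-\Zgood]=o(\Erw[Z])$), while the restricted conditional second moment $\Erw[\Zgood^2\mid(C_l)_{l\le L}]$ matches the value that makes $\ln\Zgood-\ln\Erw[\Zgood]$ converge, conditionally, to $\sum_{l=3}^{L}\bigl(X_l\ln(1+\delta_l)-\lambda_l\delta_l\bigr)-\tfrac{d^2}{4(k-1)}$. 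Finally one lets $L\to\infty$: since $|\delta_l|\le(k-1)^{-(l-1)}$ decays geometrically, $\ln(1+\delta_l)-\delta_l=O(\delta_l^2)$ and $\Var\bigl(X_l\ln(1+\delta_l)\bigr)=O(\lambda_l\delta_l^2)$ are summable, $\Erw|W|<\infty$, and the tail contributions vanish, giving the claimed convergence of $\ln Z-\ln\Erw[Z]$ to $W$.

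The main obstacle is the sharp second moment, in both its unconditional and its conditional refinement. Two things must be controlled at once: that the maximiser of the overlap rate function stays at $\tfrac1k\matone$ throughout the density range --- near $\dc$ this is exactly the delicate variational problem behind the condensation transition, where the entropy of near‑uncorrelated pairs has to beat that of every other overlap, including the ``frozen'' overlaps close to a permutation matrix --- and that the Laplace/Hessian constant at $\tfrac1k\matone$ is evaluated precisely enough to produce the correct deterministic shift $-\tfrac{d^2}{4(k-1)}$ rather than just a bounded error. The accompanying nuisance, which forces the \good/\bad{} truncation and prevents one from quoting the classical small subgraph conditioning theorem verbatim, is precisely the failure of uniform integrability of $Z/\Erw[Z]$ noted above.
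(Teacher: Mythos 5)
Your skeleton (sharp first moment, overlap second moment maximised at the flat overlap via \cite{AchNaor} resp.\ \cite{Cond,Danny}, planted cycle counts giving the joint factorial moments, a truncation plus a variance decomposition conditional on the short cycle counts, then $L\to\infty$) is the same as the paper's. The genuine gap is in where the deterministic constant $-d^2/(4(k-1))$ comes from. In the paper the entire small subgraph conditioning argument is carried out in the multigraph model $\gnm$, in which cycles of length $2$ (double edges) occur and contribute the term with $\lambda_2\delta_2=d^2/(4(k-1))$; what is proved there is that $\ln Z_k(\gnm)-\ln\Erw\brk{Z_k(\gnm)}$ converges to $W'=\sum_{l\ge2}\bigl(X_l\ln(1+\delta_l)-\lambda_l\delta_l\bigr)$, which satisfies $\Erw\brk{\eul^{W'}}=1$, i.e.\ there is \emph{no} quenched--annealed gap and no escape of mass in this regime. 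The constant only appears at the very end, when one conditions on the event that $\gnm$ is simple: this freezes $X_2=0$ and leaves the deterministic term $-\lambda_2\delta_2$ (the final normalisation by $\Erw\brk{Z_k(G(n,m))}$ is then handled only up to a multiplicative constant via Fact~\ref{Fact_doubleEdge}). Your proposal instead works throughout in the simple graph with cycles $l\ge3$ and tries to manufacture the constant from a quenched--annealed gap detected by an ``exact conditional second moment''. That mechanism cannot work: if the (truncated, conditional) second-moment ratio converges to exactly $\exp\bigl(\sum_{l\ge3}\lambda_l\delta_l^2\bigr)$, then the conditional variance vanishes and the method forces $Z/\Erw\brk Z$ to converge to $\prod_{l\ge3}(1+\delta_l)^{X_l}\eul^{-\lambda_l\delta_l}$, which has mean $1$ and carries no shift; if the ratio converges to anything strictly larger, the conditional variance does not vanish and your concentration step collapses, so again no deterministic shift can be extracted. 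A second moment simply cannot produce a constant downward displacement of $\ln Z$; in the paper the displacement is pure $l=2$ bookkeeping of the model transfer, and your reading ``$\Erw\brk Z$ is inflated by atypically colourful graphs, $Z/\Erw\brk Z$ fails to be uniformly integrable'' is not what the argument establishes in the covered density range.

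Two further points. First, Paley--Zygmund only gives $Z\ge c\,\Erw\brk Z$ with probability bounded away from $0$, not \whp, so your step ``$Z=\Omega(\Erw\brk Z)$ \whp, hence the truncated variable equals $(1+o(1))Z$ \whp'' is unsupported as stated; the paper never needs such a statement and instead compares $Z_k$, $\Zw$ and their conditional expectations via Markov's inequality in \Lem~\ref{Lem_concen_3}. Second, the reason one cannot quote Janson's theorem verbatim is not a failure of uniform integrability: it is that a statement about the limiting \emph{distribution} requires controlling all colour-density classes $\Zwni$, $s\in\Swn$, simultaneously and letting $\w,\nu\to\infty$ (balanced colourings alone do not suffice), and, in the regime $2(k-1)\ln(k-1)\le d^{\prime}<\dc$, replacing $\Zwni$ by the tame variable $\Ztame$ of \Prop~\ref{Prop_first_mom_tame_bal}, whose definition is about separability and cluster size as in \cite{Cond,Danny}, not about the colourings' ``interaction with the bounded-length cycles''. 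Your sketch leaves this partition and the accompanying uniformity in $s$, $\w$, $\nu$ entirely unaddressed, which is precisely the technical core of \Sec s~\ref{Sec_outline_Paper4} and~\ref{Sec_second_moment_Paper4}.
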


\begin{remark}
	By definition, $W$ has an infinitely divisible distribution.
	It was shown in \cite{Janson} that the random variable $W^{\prime}=\exp\brk{W}$ converges almost surely and in $L^2$ with $\Erw\brk{W^{\prime}}=1$ and $\Erw\brk{{W^{\prime}}^2}=\exp\brk{\sum_l\lambda_l\delta_l^2}$. Thus, by Jensen's inequality it follows that $\Erw\brk{W}\le 0 $ and $\Erw\brk{W^2}\le \sum_l\lambda_l\delta_l^2$. 
\end{remark}

\subsection{Discussion and further related work.}\label{Sec_discussion_Paper4}

The crucial observation that the proof of \Thm~\ref{Thm_main} builds upon is that the fluctuations of $\ln Z_k(G(n,m))$
can be attributed to variations in the number of cycles of bounded length in the random graph.
This has first been observed in \cite{aco_plantsil} and has been used to determine the order of magnitude of the fluctuations of $\ln Z_k(G(n,m))$ in the random graph colouring problem. Following this result, the asymptotic distribution of the logarithm of the number of solutions has been established for random regular $k$-SAT \cite{aco_Wormald} and random hypergraph 2-colouring \cite{Rassmann}. 
Our result \Thm~\ref{Thm_main} refines the analysis from \cite{aco_Wormald} to obtain this asymptotic distribution also in random graph $k$-colouring in a broad density regime up to the condensation transition $\dc$ (for large values of $k$).
The proof combines the second moment arguments from Achlioptas and Naor~\cite{AchNaor} and its enhancements from~\cite{Cond,Danny} with the ``small subgraph conditioning''. This method was originally developed in~\cite{RobWor, RobinsonWormald} and extended by Janson \cite{Janson} to obtain limiting distributions. It has been frequently used in random regular graph problems (see \cite{Wormald_Survey} for an enlightening survey), e.g.~in \cite{Kemkes} and \cite{RegCol} to upper-bound the chromatic number of the random $d$-regular graph, as the sharp threshold result~\cite{AchFried} does not apply for this problem. More recently, is has also been used to obtain results in the stochastic block model \cite{NeemanNetra} and to determine the satisfiability threshold for positive 1-in-$k$-SAT \cite{Moore}.
Unfortunately, Janson's result does not apply directly in our case and instead we have to perform a variance analysis along the lines of \cite{RobinsonWormald}, very analogue to \cite{aco_Wormald, Rassmann}. The reason for this is that in contrast to \cite{aco_plantsil}, where only bounds on the fluctuation of $\ln Z_k$ were proven, we aim at a statement about its asymptotic {\em distribution}. Thus, in the present paper it does not suffice to consider colourings with {\em balanced} colour classes (with a deviation of $o(n^{-1/2})$ from their typical value), but we have to get a handle on all colourings providing a positive contribution. To this aim, we collect together colourings exhibiting similar colour class sizes. This results in the need to not only consider one random variable, but a growing number of random variables simultaneously. 
We expect that it is possible to apply a combination of the second moment method and small subgraph conditioning to a variety of further random constraint problems, such as e.g.~random $k$-NAESAT, random $k$-XORSAT or random hypergraph $k$-colourability. However, for asymmetric problems like the well-known benchmark problem random $k$-SAT, we expect that the logarithm of the number of satisfying assignments exhibits stronger fluctuations and we doubt that a result similar to ours can be established.

\subsection{Preliminaries and notation}
We always assume that $n\geq n_0$ is large enough for our various estimates to hold and denote by $[n]$ the set $\{1,...,n\}$. 

We use the standard $O$-notation when referring to the limit $n\ra\infty$.
Thus, $f(n)=O(g(n))$ means that there exist $C>0$, $n_0>0$ such that for all $n>n_0$ we have $|f(n)|\leq C\cdot|g(n)|$.
In addition, we use the standard symbols $o(\cdot),\Omega(\cdot),\Theta(\cdot)$.
In particular, $o(1)$ stands for a term that tends to $0$ as $n\ra\infty$.
Furthermore, the notation $f(n)\sim g(n)$ means that $f(n)=g(n)(1+o(1))$ or equivalently $\lim_{n\to\infty} f(n)/g(n)=1$. Besides taking the limit $n\to\infty$, at some point we need to consider the limit $\nu\to\infty$ for some number $\nu \in \mathbb N$. Thus, we introduce $f(n,\nu)\sim_{\nu} g(n,\nu)$ meaning that $\lim_{\nu\to\infty}\lim_{n\to\infty} f(n,\nu)/g(n,\nu)=1$.

If $p=(p_1,\ldots,p_l)$ is a vector with entries $p_i\geq0$, then we let
	$$\cH(p)=-\sum_{i=1}^lp_i\ln p_i.$$
Here and throughout, we use the convention that $0\ln0=0$.
Hence, if $\sum_{i=1}^lp_i=1$, then $\cH(p)$ is the entropy of the probability distribution $p$.
Further, for a number $x$ and an integer $h>0$ we let $(x)_h=x(x-1)\cdots(x-h+1)$ denote the $h$th falling factorial of $x$.

For the sake of simplicity, we choose to prove \Thm~\ref{Thm_main} using the random graph model $\gnm$.
This is a random (multi-)graph on the vertex set $[n]$ obtained by choosing $m$ edges $\vec e_1,\ldots,\vec e_m$ of the complete graph on $n$ vertices
uniformly and independently at random (i.e., with replacement). In this model we may choose the same edge more than once. However, the following statement shows that for sparse random graphs the probability of this event is bounded away from 1:

\begin{fact}\label{Fact_doubleEdge}
Assume that $m=m(n)$ is a sequence such that $m=O(n)$ and let $\cA_n$ be the event that $\gnm$ has no multiple edges. Then there is a constant $c>0$ such that $\lim_{n\to\infty}\pr\brk{\cA_n}>c$. 
\end{fact}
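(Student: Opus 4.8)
The plan is to compute $\pr[\cA_n]$ exactly and then bound it from below by a constant. Reveal the edges $\vec e_1,\ldots,\vec e_m$ one at a time. Since they are i.i.d.\ uniform over the $N:=\binom n2$ possible edges of the complete graph, conditioning on $\vec e_1,\ldots,\vec e_{j-1}$ being pairwise distinct the next edge $\vec e_j$ avoids all of them with probability $1-(j-1)/N$. Hence
$$\pr[\cA_n]=\prod_{j=0}^{m-1}\br{1-\frac jN}.$$

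Next I would take logarithms. Fix a constant $C$ with $m\le Cn$ for all large $n$. For every $j\le m-1$ we have $j/N\le Cn/\binom n2=O(1/n)$, so for $n$ large each factor lies in $[1/2,1]$ and the elementary inequality $\ln(1-x)\ge-2x$, valid for $x\in[0,1/2]$, applies. This gives
$$\ln\pr[\cA_n]\ge-\frac2N\sum_{j=0}^{m-1}j=-\frac{m(m-1)}{N}=-\frac{2m(m-1)}{n(n-1)}\ge-2C^2\cdot\frac{n}{n-1}\ge-4C^2$$
for all $n\ge2$. Therefore $\pr[\cA_n]\ge\exp(-4C^2)$ for all sufficiently large $n$, and since $\exp(-4C^2)$ is a positive constant independent of $n$ this proves the claim (one may take, say, $c=\exp(-5C^2)$, or simply record that $\liminf_{n\to\infty}\pr[\cA_n]\ge\exp(-4C^2)>0$).

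There is no serious obstacle here; the one point worth flagging is that the crude union bound $\pr[\cA_n^{\mathrm c}]\le\binom m2/N$ is \emph{not} sufficient when the edge-density constant $C$ is large, since it only yields $\pr[\cA_n^{\mathrm c}]=O(C^2)$, which is vacuous once $C^2\ge1$; one really does need the multiplicative structure of $\pr[\cA_n]$ to extract a positive lower bound. The only place the hypothesis $m=O(n)$ is used is in guaranteeing that $j/N\to0$ uniformly over $j<m$, which is exactly what makes the logarithmic expansion legitimate, so keeping track of that uniformity is all the argument requires.
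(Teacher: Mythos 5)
Your proof is correct. For this statement the paper offers no proof at all -- Fact~\ref{Fact_doubleEdge} is stated as a standard fact -- so there is nothing to compare step by step; the route the paper implicitly has in mind is via Fact~\ref{Fact_cyc}: $\cA_n$ is precisely the event $\cbc{C_{2,n}=0}$, so when $d=2m/n$ converges one gets $\pr\brk{\cA_n}\to\exp\brk{-d^2/4}>0$, and general $m=O(n)$ is handled by passing to subsequences along which $m/n$ converges. Your argument is different and, if anything, preferable for this purpose: you compute $\pr\brk{\cA_n}=\prod_{j=0}^{m-1}\br{1-j/N}$ exactly (legitimate because the $\vec e_j$ are independent, which is exactly what your conditioning step uses), and the bound $\ln(1-x)\ge-2x$ on $[0,1/2]$ applies uniformly since $j/N\le 2C/(n-1)\to0$; this is elementary, needs no Poisson approximation and no convergence of $m/n$, and yields the explicit constant $\exp\br{-4C^2}$. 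Your remark that the crude union bound $\pr\brk{\cA_n^{\mathrm c}}\le\binom m2/N$ is useless for large $C$ is also well taken. One cosmetic point: the ``$\lim_{n\to\infty}$'' in the statement should be read as $\liminf_{n\to\infty}$, since for a general sequence $m=O(n)$ the limit need not exist; your conclusion $\liminf_{n\to\infty}\pr\brk{\cA_n}\ge\exp\br{-4C^2}>0$ is exactly what is used later in the paper, namely $\pr\brk S=\Omega(1)$ in the proof of Theorem~\ref{Thm_main}.
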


\section{Outline of the proof}\label{Sec_outline_Paper4}

To determine bounds on $Z_k(\gnm)$, it will be necessary to control the size of the colour classes. To formalize this, we introduce the following notation. For a map $\sigma:\brk n\ra\brk k$, we define
\begin{align*}
\rho(\sigma)=(\rho_1(\sigma),\ldots,\rho_k(\sigma)),\quad\mbox{where }\rho_{i}(\sigma) = | \sigma^{-1}(i) |/n \quad \text{for } i = 1 \dots k.
\end{align*}
Thus, $\rho(\sigma)$ is a probability distribution on $\brk k$, to which we refer as the {\em colour density} of $\sigma$.

Let $\cA_k(n)$ signify the set of all possible colour densities $\rho(\sigma)$ for $\sigma:\brk n\ra\brk k$.
Further, let $\cA_k$ be the set of all probability distributions $\rho=(\rho_1,\ldots,\rho_k)$ on $\brk k$,
and let $\rho^\star=(1/k,\ldots,1/k)$ signify the barycentre of $\cA_k$.

In order to simplify the notation, for the rest of the paper we assume that $\w, \nu$ are odd natural numbers, formally we define $N=\{2i-1:i \in\mathbb N\}$ and let $\w,\nu \in N$.
We say that $\rho=(\rho_1,\ldots,\rho_k)\in\cA_k(n)$ is {\em $(\w,n)$-balanced} if
\begin{align*}
\rho_i\in \left[\frac 1k- \frac{\w}{\sqrt{n}}\ ,\ \frac 1k+ \frac{\w}{\sqrt{n}}\right) \quad\mbox{ for all $i\in\brk k$}
\end{align*}
and let $\Aw$ denote the set of all $(\omega,n)$-balanced $\rho\in\cA_k(n)$. As we will see, in order to prove statements about the number $Z_k$ of all solutions, it suffices to consider solutions $\sigma$ with $\rho(\sigma) \in \Aw$. We let $\Zw(G)$ signify the number of {\em $(\omega,n)$-balanced $k$-colourings} of a graph $G$ on $[n]$, i.e. $k$-colourings $\sigma$ such that $\rho(\sigma) \in \Aw$.

Since verifying the required properties to apply small subgraph conditioning directly for the random variable $\Zw$ is very intricate, we break $\Zw$ down into smaller contributions, for which we determine the first and second moment in the following sections.

To this aim, we decompose the set $\Aw$ into smaller sets. We define
\begin{align}\label{eq_Swn}
\Swn=\cbc{s\in \mathbb Z^k:\|s\|_1=2i, i \in\mathbb N, i \le \frac{\w\nu-1}2}.
\end{align}

$\Swn$ contains vectors that we use as centres of disjoint 'balls' to partition the set $\Aw$: For $s=(s_1,...,s_k) \in \Swn$, we let $\rwnik \in \mathbb R^k$ be the vector with components 
\begin{align}\label{rwnik}
\rwnik_i=\frac 1k+\frac{s_i}{\nu\sqrt n}.
\end{align}
Further, we let $\Awni$ be the set of all colour densities $\rho\in\Aw$ such that
\begin{align*}
\rho_i\in \left[\rwnik_i- \frac{1}{\nu\sqrt{n}}\ ,\ \rwnik_i+ \frac{1}{\nu\sqrt{n}}\right).
\end{align*}
For a graph $G$, we denote by $\Zwni(G)$ the number of 2-colourings $\sigma$ such that $\rho(\sigma)\in\Awni$. For each fixed $\nu$, we have $\Zw=\sum_{s\in \Swn} \Zwni$ and our strategy is to apply small subgraph conditioning to the random variables $\Zwni$ rather than directly to $Z_k$. But first, we will calculate the first moments of $Z_k$ and $\Zw$ in \Sec~\ref{Sec_first_moment_Paper4} to obtain the following.

\begin{proposition} \label{Prop_first_mom}
Fix an integer $k\geq 3$ and a number $d^{\prime} \in (0, \infty)$. Let $\w>0$. Then 
$$\Erw\brk{Z_k(\gnm)}=\Theta(k^n(1-1/k)^m)\quad\mbox{and}\quad
	\lim_{\w \to \infty} \liminf_{n \to \infty}\frac{\Erw\brk{\Zw(\gnm)}}{\Erw\brk{Z_k(\gnm)}}=1. $$
\end{proposition}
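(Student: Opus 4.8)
The plan is to compute $\Erw[Z_k(\gnm)]$ exactly by a standard first-moment expansion over colour densities, and then to argue that the contribution of non-$(\w,n)$-balanced colourings is negligible as $\w \to \infty$. First I would write, by linearity of expectation, $\Erw[Z_k(\gnm)] = \sum_{\sigma:[n]\to[k]} \pr[\sigma \text{ proper}]$, and group the maps $\sigma$ by their colour density $\rho = \rho(\sigma) \in \cA_k(n)$. For a fixed $\rho$, the number of maps with that density is the multinomial coefficient $\binom{n}{\rho_1 n, \ldots, \rho_k n}$, and since edges in $\gnm$ are drawn uniformly and independently with replacement, the probability that a single random edge is bichromatic under $\sigma$ is exactly $q(\rho) := 1 - \sum_{i=1}^k \rho_i^2$; hence $\pr[\sigma \text{ proper}] = q(\rho)^m$. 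This gives
\begin{align*}
\Erw[Z_k(\gnm)] = \sum_{\rho \in \cA_k(n)} \binom{n}{\rho_1 n, \ldots, \rho_k n}\, q(\rho)^m.
\end{align*}

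Next I would apply Stirling's formula to the multinomial coefficient, yielding $\binom{n}{\rho_1 n, \ldots, \rho_k n} = \exp(n\cH(\rho) + O(\ln n))$ uniformly over $\rho \in \cA_k(n)$ with all $\rho_i$ bounded away from $0$ (which holds automatically once $n$ is large since the relevant $\rho$ are close to $\rho^\star$; the contribution of $\rho$ with some tiny coordinate is exponentially smaller and can be discarded up front). Thus each summand is $\exp(n\,\varphi(\rho) + O(\ln n))$ where $\varphi(\rho) = \cH(\rho) + \tfrac{d'}{2}\ln q(\rho)$, using $m = \lceil d'n/2\rceil$. A Lagrange-multiplier computation shows $\varphi$ is maximised uniquely at the barycentre $\rho^\star$, with $\varphi(\rho^\star) = \ln k + \tfrac{d'}{2}\ln(1-1/k)$ and negative-definite Hessian there; a quadratic Taylor expansion around $\rho^\star$ combined with the fact that the lattice $\cA_k(n)$ has spacing $1/n$ then converts the sum into a Gaussian-type sum over the $(k-1)$-dimensional lattice, which evaluates to $\Theta(n^{-(k-1)/2})$ times $\exp(n\varphi(\rho^\star))$, up to bounded factors from the $O(\ln n)$ error and the precise boundary rounding. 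Collecting constants gives $\Erw[Z_k(\gnm)] = \Theta(k^n (1-1/k)^m)$, since $\exp(n\varphi(\rho^\star)) = k^n(1-1/k)^{d'n/2}$ and $(1-1/k)^{d'n/2} = \Theta((1-1/k)^m)$.

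For the second assertion I would run essentially the same Laplace/Gaussian analysis but restricted to $\rho \in \Aw$, i.e. to $\|\rho - \rho^\star\|_\infty < \w/\sqrt n$. In the Gaussian sum over the scaled lattice, the restriction to $\Aw$ corresponds to truncating the Gaussian integral to a box of half-width $\w$ (in units of $1/\sqrt n$). As $\w \to \infty$ this box exhausts all of $\RR^{k-1}$, so the truncated Gaussian sum converges to the full one; hence $\Erw[\Zw(\gnm)]/\Erw[Z_k(\gnm)] \to 1$ after first taking $n\to\infty$ and then $\w\to\infty$. Concretely, one bounds $\Erw[Z_k - \Zw]$ by the same multinomial sum restricted to $\|\rho-\rho^\star\|_\infty \ge \w/\sqrt n$, and the negative-definiteness of the Hessian gives a tail bound $\le C \exp(n\varphi(\rho^\star)) n^{-(k-1)/2} e^{-c\w^2}$ for $\rho$ in the regime near $\rho^\star$, plus an exponentially smaller term $\exp(-\Omega(n))$ for $\rho$ far from $\rho^\star$; dividing by $\Erw[Z_k] = \Theta(\exp(n\varphi(\rho^\star)) n^{-(k-1)/2})$ yields a bound $O(e^{-c\w^2}) \to 0$.

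The main obstacle is not the leading-order asymptotics but making the error terms uniform: one needs the Stirling estimate and the quadratic expansion of $\varphi$ to hold with the same implied constants over the whole window $\|\rho - \rho^\star\|_\infty < \w/\sqrt n$ while $\w$ itself grows, and one must handle the interchange of limits $\lim_{\w\to\infty}\liminf_{n\to\infty}$ carefully together with the ceiling in $m = \lceil d'n/2\rceil$ (which shifts $m$ from $dn/2$ by $O(1)$, affecting $q(\rho)^m$ only by a bounded factor). Keeping track of the constant $\Theta(\cdot)$ through the boundary rounding of the lattice $\cA_k(n)$ is the fiddly part, but it is routine once the Hessian at $\rho^\star$ is pinned down.
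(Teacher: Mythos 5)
Your proposal is correct and takes essentially the same route as the paper: an exact first-moment sum over colour densities, Stirling plus a quadratic Laplace/Gaussian expansion of $\cH(\rho)+\frac{d}{2}\ln\bigl(1-\sum_i\rho_i^2\bigr)$ around $\rho^\star$ (with the far-from-$\rho^\star$ densities discarded by the exponential gap), and then treating the restriction to $\Aw$ as a truncated Gaussian sum whose mass tends to the full one as $\w\to\infty$. One small inaccuracy: the single-edge bichromatic probability is $1-\sum_i\binom{\rho_i n}{2}/\binom{n}{2}=\frac{n}{n-1}\bigl(1-\sum_i\rho_i^2\bigr)$, not exactly $1-\sum_i\rho_i^2$; over $m$ edges this produces the $\rho$-independent factor $e^{d/2}(1+o(1))$ that the paper tracks explicitly, which is harmless for both the $\Theta$-estimate and the ratio.
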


As discussed in \Sec~\ref{Sec_discussion_Paper4}, the key observation the proof is based on is that the fluctuations of $Z_k(\gnm)$ can be attributed to fluctuations in the number of cycles of a bounded length.
Hence, for an integer $l\geq 2$ we let $C_{l,n}$ denote the number of cycles of length exactly $l$ in $\gnm$.
Let
\begin{align}\label{eq_lambdelt}
	\lambda_l=\frac{d^l}{2l}\quad\mbox{ and }\quad\delta_l=\frac{(-1)^l}{(k-1)^{l-1}}.
	\end{align}
The following fact shows that $C_{2,n},\ldots$ are asymptotically independent Poisson variables (e.g.~\cite[\Thm~5.16]{Bollobas}):

\begin{fact}\label{Fact_cyc}
If $c_2,\ldots,c_L$ are non-negative integers, then
	$$\lim_{n\ra\infty}\pr\brk{\forall 2\leq l\leq L:C_{l,n}=c_l}=\prod_{l=2}^L\pr\brk{\Po(\lambda_l)=c_l}.$$
\end{fact}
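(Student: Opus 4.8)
The statement to prove is Fact~\ref{Fact_cyc}: in $\gnm$ with $m=\lceil d'n/2\rceil$, the cycle counts $C_{2,n},\dots,C_{L,n}$ converge jointly to independent Poisson variables with means $\lambda_l=d^l/(2l)$.

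\textbf{Plan of proof.} The standard route is the method of moments for joint convergence to independent Poissons: it suffices to show that for every fixed tuple of non-negative integers $(a_2,\dots,a_L)$ one has
\[
\lim_{n\to\infty}\Erw\Bigl[\prod_{l=2}^L (C_{l,n})_{a_l}\Bigr]=\prod_{l=2}^L \lambda_l^{a_l},
\]
where $(x)_a$ is the falling factorial, since the right-hand side is exactly the product of the corresponding factorial moments of independent $\Po(\lambda_l)$ variables, and the Poisson distribution is determined by its moments. The falling-factorial moment $\Erw[(C_{l,n})_{a_l}]$ counts ordered $a_l$-tuples of distinct $l$-cycles, and the mixed product counts tuples of distinct cycles of the prescribed lengths; I will compute these by linearity of expectation over the collection of potential cycles in the complete graph $K_n$.

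\textbf{Key steps.} First I would fix an unordered collection $\mathcal{C}$ consisting of $a_l$ distinct cycles of length $l$ for each $l$, and compute the probability that all of them appear in $\gnm$. Because $\gnm$ is the multigraph obtained by drawing $m$ edges of $K_n$ uniformly with replacement, the probability that a fixed set $F$ of $t$ distinct edges all occur among $\vec e_1,\dots,\vec e_m$ is, by inclusion–exclusion,
\[
\pr[F\subseteq \gnm]=\sum_{j=0}^{t}(-1)^j\binom{t}{j}\Bigl(1-\tfrac{j}{\binom n2}\Bigr)^m = \bigl(1+o(1)\bigr)\,\Bigl(\tfrac{m}{\binom n2}\Bigr)^{t}= \bigl(1+o(1)\bigr)\Bigl(\tfrac dn\Bigr)^{t},
\]
uniformly over collections with $t=O(1)$ edges, using $m\sim d'n/2\sim dn/2$ and $t$ bounded. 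The dominant contribution comes from vertex-disjoint collections of cycles: the number of ways to choose $a_l$ ordered vertex-disjoint $l$-cycles for each $l$ is $(1+o(1))\prod_l (n^l/(2l))^{a_l}$ (choose the $l$ vertices in order, divide by $2l$ for rotations and reflection), each contributing probability $(1+o(1))(d/n)^{\sum_l l a_l}$; multiplying, the powers of $n$ cancel and one gets $\prod_l \lambda_l^{a_l}$. I would then argue that collections of cycles that share vertices (hence have strictly fewer than $\sum_l l a_l$ vertices, while still having at least as many edges) contribute $o(1)$, because the vertex gain is outweighed by the edge cost; this is the usual ``only vertex-disjoint configurations survive'' estimate. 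Passing from the falling-factorial-moment computation back to the distributional statement is then the classical moment-convergence theorem for Poisson limits.

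\textbf{Main obstacle.} The genuinely delicate point is the uniform control of the error terms in the two estimates above over a number of configurations that, while polynomial in $n$, must be shown to contribute negligibly in aggregate — i.e., bounding the total contribution of non-vertex-disjoint cycle collections. One must check that subtracting an edge never helps as much as it costs a vertex once $d$ is fixed, and handle degenerate ``cycles'' (repeated edges, multigraph artifacts) that the with-replacement model introduces; these are ruled out since a length-$l$ cycle uses $l$ distinct edges and the probability of any prescribed multi-incidence is $O(n^{-1})$ smaller. Since this is a classical result (essentially \cite[\Thm~5.16]{Bollobas}, adapted to $\gnm$ with replacement), I would in practice simply invoke it, noting that the at-most-$O(1/n)$ discrepancy between the with-replacement multigraph model and the uniform simple-graph model $\gnm$ does not affect the limiting Poisson law; the sketch above indicates how the adaptation goes should a self-contained argument be desired.
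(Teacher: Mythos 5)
The paper does not actually prove this statement; it is quoted with a pointer to Bollob\'as, and the only related computation carried out in the paper is the conditional analogue in \Lem~\ref{Lem_planted_cyc}, which uses exactly your strategy (joint falling-factorial moments, dominant contribution from vertex-disjoint cycle collections, method of moments via \cite[\Thm~1.23]{Bollobas}). So your outline is the right classical route, and for $l\ge 3$ the sketch is essentially correct: the inclusion--exclusion estimate $\pr\brk{F\subseteq\gnm}\sim(d/n)^{|F|}$ for a fixed set of distinct $K_n$-edges, the count $\sim (n^l/(2l))^{a_l}$ of ordered vertex-disjoint collections, and the observation that any component containing two intersecting distinct cycles has at least one more edge than vertices (hence contributes $O(1/n)$) together give the factorial moments $\prod_l\lambda_l^{a_l}$.

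There are, however, two concrete missteps. First, the fact includes $l=2$, and in the multigraph $\gnm$ a $2$-cycle \emph{is} a repeated vertex pair among the $m$ draws; $C_{2,n}$ converges to $\Po(\lambda_2)$ with $\lambda_2=d^2/4>0$. Your claim that repeated-edge ``artifacts'' are ruled out ``since a length-$l$ cycle uses $l$ distinct edges'' is false for $l=2$, and your distinct-edge inclusion--exclusion formula does not compute the relevant probability; what is needed is $\pr\brk{\{u,v\}\mbox{ drawn at least twice}}\sim\bink{m}{2}\big/\bink{n}{2}^2$, which multiplied by $\sim n^2/2$ pairs gives precisely $d^2/4$, and these terms must be carried through the joint-moment computation (they are needed in the paper: the passage from $\gnm$ to $G(n,m)$ in the proof of \Thm~\ref{Thm_main} conditions on the absence of $2$-cycles, and $W$ versus $W'$ differ exactly in the $l=2$ term). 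Second, your justification for importing the simple-graph version of the theorem --- that the with-replacement multigraph model and the simple model differ by ``at most $O(1/n)$'' --- contradicts Fact~\ref{Fact_doubleEdge}: the probability that $\gnm$ has no multiple edge converges to a constant in $(0,1)$ (roughly $\eul^{-\lambda_2}$), so the two models are at constant total-variation distance and no $o(1)$-coupling transfers the law. The repair is cheap: run your moment computation directly in the multigraph model (for $l\ge3$ it is unchanged, for $l=2$ as above), rather than appealing to asymptotic equivalence of the models; with that, your argument is complete and matches the standard proof behind the paper's citation.
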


In \Sec~\ref{Sec_Short_cyc} the impact of the cycle counts $C_{l,n}$ on the first moment of $\Zwni(\gnm)$ is investigated. As this was already done in \cite{aco_plantsil}, we carry it out in the present work only for the sake of completeness. The result is the following:

\begin{proposition}\label{Prop_CondRatio1stMom}
Assume that $k\geq3$ and $d^{\prime}\in(0,\infty)$. 
Then
\begin{align*} 
\sum_{l=2}^\infty\lambda_l\delta_l^2<\infty.
\end{align*}
Moreover, let $\w, \nu \in N$ and $c_2,\ldots, c_L$ be non-negative integers. Then
\begin{align}\label{eq_CondRatio1stMom}
\frac{\Erw\brk{\Zwni(\gnm)|\forall 2\leq l\leq L:C_{l,n}=c_l}}{\Erw\brk{\Zwni(\gnm)}}\sim \prod_{l=2}^L\brk{1+\delta_l}^{c_l}\exp\brk{-\delta_l\lambda_l}.
\end{align}
\end{proposition}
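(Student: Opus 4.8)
The plan is to compute the conditional first moment by a direct, combinatorial argument exploiting the fact that in $\gnm$ the edges $\vec e_1,\ldots,\vec e_m$ are chosen independently and uniformly, so that conditioning on the short cycle counts amounts to conditioning on a bounded number of ``edge slots'' being arranged into prescribed cycle patterns. First I would fix a colour density $\rho\in\Awni$ and, using the computation from \Sec~\ref{Sec_first_moment_Paper4} (Proposition~\ref{Prop_first_mom} and its proof), recall that the probability that a fixed $\sigma$ with $\rho(\sigma)=\rho$ is a proper colouring equals, up to $1+o(1)$, $((1-\norm\rho_2^2)^{m})$, and that $\norm\rho_2^2 = 1/k + O(\w^2\nu^2/n)$ on $\Awni$. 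The key point is to re-examine this computation edge by edge: the probability that one random edge is bichromatic under $\sigma$ is exactly $1-\norm{\rho(\sigma)}_2^2$, and these events are independent across the $m$ edges.

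Next I would analyse the numerator. Conditioning on $\{C_{l,n}=c_l\ \forall\, 2\le l\le L\}$ and summing over $\sigma$ with $\rho(\sigma)\in\Awni$, I would use the standard device of writing the cycle-count conditioning in terms of a union bound / inclusion over the $\sum_l c_l$ edge-disjoint cycles (a bounded number of them), each of bounded length. For a fixed placement of these cycles on the vertex set and a fixed $\sigma$, the probability that $\sigma$ properly colours the cycle edges is a product of local factors: along a cycle of length $l$ the number of proper colourings under the constraint that the $l$ chosen vertices have prescribed colour classes is, to leading order, $(k-1)^l + (-1)^l(k-1)$ out of the $k^l$ colour patterns that respect $\rho^\star$ up to the allowed deviation — equivalently, conditioning a single random edge to lie on such a cycle multiplies its ``bichromatic'' probability by a factor $1+\delta_l + o(1)$ relative to the unconditioned $1-1/k$ (this is the eigenvalue computation for the transfer matrix $J-I$ on $[k]$, whose relevant eigenvalue ratio is $-1/(k-1)$). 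Since only a bounded number of edges are affected and all other edges remain independent and contribute the same factor $1-\norm\rho_2^2$ as in the denominator, the ratio of the two moments factorizes. Collecting the $c_l$ cycles of each length and the combinatorial prefactor for choosing their vertex sets (which cancels against the corresponding prefactor implicit in $\pr[C_{l,n}=c_l]$, giving the normalisation $\exp(-\delta_l\lambda_l)$ via $\pr[\Po(\lambda_l)=c_l]$ from Fact~\ref{Fact_cyc}) yields exactly $\prod_{l=2}^L(1+\delta_l)^{c_l}\exp(-\delta_l\lambda_l)$.

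For the summability claim $\sum_{l\ge2}\lambda_l\delta_l^2<\infty$, I would simply substitute the definitions \eqref{eq_lambdelt}: $\lambda_l\delta_l^2 = \frac{d^l}{2l}\cdot\frac{1}{(k-1)^{2(l-1)}} = \frac{(k-1)^2}{2l}\left(\frac{d}{(k-1)^2}\right)^l$, which is summable whenever $d<(k-1)^2$; and this holds throughout the relevant density range since $d^{\prime}\le 2(k-1)\ln(k-1) < (k-1)^2$ for $k\ge3$ (resp. $d^{\prime}<\dc = (2k-1)\ln k + O(1) < (k-1)^2$ for $k\ge k_0$), and $d\sim d^{\prime}$.

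The main obstacle I expect is making the ``only a bounded number of edges are affected, everything else factorizes'' step fully rigorous uniformly over $\rho\in\Awni$: one must control the error terms in the edge-local probabilities (which depend on $\rho$ through $\norm\rho_2^2$ and through the exact colour-class sizes at the cycle vertices) and show they contribute only a $1+o(1)$ multiplicative factor \emph{after} summing over all $\sigma$ with $\rho(\sigma)\in\Awni$ and over all placements of the $O(1)$ cycles, and that the conditioning event can indeed be reduced to a bounded-complexity event up to negligible corrections (e.g. ruling out cycles sharing edges, or cycles through multi-edges, using Fact~\ref{Fact_doubleEdge} and $m=O(n)$). Since the corresponding estimate was carried out in \cite{aco_plantsil}, I would follow that argument closely, taking care that all implicit constants are uniform in the $(\w,\nu)$-ball under consideration so that the conclusion survives the later limits $n\to\infty$ then $\nu\to\infty$ then $\w\to\infty$.
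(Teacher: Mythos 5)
Your overall architecture is the same as the paper's: the paper also rewrites $\Erw\brk{\Zwni(\gnm)\mid\cE}$ via Bayes as $\sum_\tau\pr\brk{\cV(\tau)}\pr\brk{\cE\mid\cV(\tau)}/\pr\brk{\cE}$, obtains the per-cycle factor from the count $(k-1)^l+(-1)^l(k-1)$ of proper colour patterns along an $l$-cycle (via the recurrence $T_l+T_{l-1}=k(k-1)^{l-1}$, equivalent to your eigenvalue computation for $J-I$), and divides by $\pr\brk{\cE}\sim\prod_l\pr\brk{\Po(\lambda_l)=c_l}$ using Fact~\ref{Fact_cyc}. Your treatment of the summability claim is also fine (and your observation that one needs $d<(k-1)^2$, which holds throughout the relevant range, is if anything more careful than the statement).

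The genuine gap is in your handling of the conditioning event. You propose to reduce $\cE$ to a ``bounded-complexity event up to negligible corrections'' and to recover the normalisation $\exp(-\delta_l\lambda_l)$ from a cancellation of combinatorial prefactors against $\pr\brk{\Po(\lambda_l)=c_l}$; that accounting does not close. The event $\cE$ demands not only that the prescribed $c_l$ cycles be present but that \emph{no further} cycles of length at most $L$ occur, and under the planted measure (conditional on $\cV(\sigma)$ for $(\w,n)$-balanced $\sigma$) this ``no extra cycles'' requirement has probability asymptotically $\prod_l e^{-\mu_l}$ with the \emph{shifted} means $\mu_l=\lambda_l(1+\delta_l)$, whereas unconditionally it is $\prod_l e^{-\lambda_l}$. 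The discrepancy $\prod_l e^{-\lambda_l\delta_l}$ is exactly the factor appearing in \eqref{eq_CondRatio1stMom}: it is neither negligible nor produced by a prefactor cancellation, and if you truncate at the prescribed cycles (i.e.\ replace ``exactly $c_l$'' by ``at least these cycles'') the ratio comes out as $(1+\delta_l)^{c_l}$ times a wrong exponential. What is actually needed is full Poisson convergence of the conditional cycle counts: the paper proves in \Lem~\ref{Lem_planted_cyc} that, given $\cV(\sigma)$, the joint factorial moments of \emph{all} orders of $C_{2,n},\dots,C_{L,n}$ converge to $\prod_l\mu_l^{m_l}$ (with Claim~\ref{Claim_OverlapCyc} disposing of intersecting cycles), and then invokes the method of moments (\cite[Theorem~1.23]{Bollobas}) to get $\pr\brk{\cE\mid\cV(\sigma)}\sim\prod_{l=2}^L e^{-\mu_l}\mu_l^{c_l}/c_l!$. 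Your per-placement expectation computation amounts only to the first-order factorial moments; to make the plan rigorous you must either compute all joint factorial moments (at which point you have reproduced the paper's proof) or carry the inclusion--exclusion to arbitrary depth with error control. The uniformity over $\Awni$ and the overlapping-cycle issues you flag are real but comparatively minor.
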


Additionally, to apply small subgraph conditioning, we have to determine the second moment of $\Zwni(\gnm)$ very precisely. This step constitutes the main technical work of this paper. We consider two regimes of $d^{\prime}$ and $k$ separately. In the simpler case, based on the second moment argument from~\cite{AchNaor}, we obtain the following result.

\begin{proposition}\label{Prop_ratio_sec_first}
Assume that $k \geq 3$ and $d^{\prime} < 2(k-1) \ln (k-1)$.
Then
	$$\frac{ \Erw \brk{\Zwni(\gnm)^2}}{\Erw \brk{ \Zwni(\gnm)}^2} \sim \exp \brk{\sum_{l \geq 2} \lambda_l \delta_l^2}.  $$
\end{proposition}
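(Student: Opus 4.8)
The plan is to compute $\Erw[\Zwni(\gnm)^2]$ by expanding it as a sum over \emph{pairs} $(\sigma,\tau)$ of $(\w,n)$-balanced $k$-colourings whose colour densities both lie in $\Awni$, classified according to the \emph{overlap matrix} $\rho = \rho(\sigma,\tau)$ whose entry $\rho_{ij} = |\sigma^{-1}(i)\cap\tau^{-1}(j)|/n$ records the fraction of vertices receiving colour $i$ under $\sigma$ and $j$ under $\tau$. For a fixed overlap $\rho$ with prescribed row sums $\rho(\sigma)$ and column sums $\rho(\tau)$, the number of pairs with that overlap is a multinomial coefficient, and the probability that all $m$ independently chosen edges avoid being monochromatic under both $\sigma$ and $\tau$ equals $\bc{1 - 2/k + \norm{\rho}_2^2}^m$, where I abbreviate $\norm{\rho}_2^2 = \sum_{i,j}\rho_{ij}^2$. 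Applying Stirling's formula turns the sum over $\rho$ into $\exp(n\,f(\rho) + O(\log n))$ for an explicit function $f$; the first moment squared is, similarly, $\exp(2n\,g)$ for the corresponding "independent" value. The standard observation from Achlioptas--Naor is that $f$ is maximised over the relevant polytope exactly at the \emph{flat} overlap $\rho = \rho(\sigma)\otimes\rho(\tau)$ (the product of the marginals), and the assumption $d^{\prime} < 2(k-1)\ln(k-1)$ is precisely what guarantees this: it rules out the "clustered" competitor near $\rho = \mathrm{diag}$. Here the marginals are pinned to within $O(1/(\nu\sqrt n))$ of $\rho^\star$, so the maximiser is within $O(1/(\nu\sqrt n))$ of $\rho^\star\otimes\rho^\star$ and $f$ is smooth and strictly concave there.

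Next I would carry out a local (Laplace/Gaussian) expansion around this maximiser. Writing $\rho = \rho^\star\otimes\rho^\star + z/\sqrt n$ with $z$ ranging over the affine lattice cut out by the two marginal constraints (each forced into a window of width $O(1/\nu)$ around the corresponding point of $\Awni$), the exponent $n f(\rho)$ becomes a nondegenerate negative-definite quadratic form in $z$ plus lower-order terms, and the Hessian is exactly the one already implicitly computed for $\Zwni$'s first moment. Summing the resulting Gaussian over the lattice, the $\Theta$-order prefactors, the entropy normalisations, and the marginal-window factors all cancel against the square of the corresponding quantities in $\Erw[\Zwni(\gnm)]^2$ (which was computed in Section~\ref{Sec_first_moment_Paper4}), leaving $\Erw[\Zwni(\gnm)^2]/\Erw[\Zwni(\gnm)]^2 \sim C$ for a constant $C$ independent of $\w,\nu$. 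Identifying $C$ with $\exp\bc{\sum_{l\ge2}\lambda_l\delta_l^2}$ is then most cleanly done \emph{not} by evaluating the Gaussian integral directly, but by a consistency/bootstrap argument: Fact~\ref{Fact_cyc} and Proposition~\ref{Prop_CondRatio1stMom} give the conditional first moments given the short-cycle counts, and since $\sum_l\lambda_l\delta_l^2<\infty$ (the first assertion of the proposition, proved by the crude bound $|\delta_l|\le (k-1)^{-(l-1)}$ and $\lambda_l = d^l/(2l)$), Janson's machinery forces any constant arising as such a second-moment ratio to equal $\exp\bc{\sum_l\lambda_l\delta_l^2}$. Alternatively one checks directly that the Gaussian integral over $z$ produces $\det(I - \text{(small matrix)})^{-1/2}$ whose logarithm telescopes into $\sum_l\lambda_l\delta_l^2$ via the series $-\tfrac12\log\det(I-A) = \sum_{l\ge1}\tfrac1{2l}\mathrm{tr}(A^l)$ with $A$ the relevant circulant-type operator on $\brk k\times\brk k$.

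The main obstacle I anticipate is controlling the sum over overlaps $\rho$ \emph{away} from the flat maximiser while the marginals are constrained to the tiny window $\Awni$. Globally, one must show the total contribution of overlaps at distance $\ge \eps$ from $\rho^\star\otimes\rho^\star$ is exponentially negligible; this is where the hypothesis $d^{\prime} < 2(k-1)\ln(k-1)$ does its work, and the estimate is essentially the Achlioptas--Naor bound, but it has to be run uniformly over the admissible marginals and then refined — in the intermediate range $n^{-1/2+\eps} \lesssim \|\rho - \rho^\star\otimes\rho^\star\| \lesssim \eps$ one needs the strict concavity of $f$ at the maximiser (a Hessian eigenvalue bound) to kill the contribution, and one must check this is not spoiled by the marginal constraints. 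A secondary technical point is that, unlike in the regular-graph setting where small subgraph conditioning applies verbatim, here the lattice of admissible $z$ shifts with $s\in\Swn$, so the Gaussian sum must be shown to be insensitive (up to $1+o_\nu(1)$) to this shift; this is a routine but careful Euler--Maclaurin / Poisson-summation argument. I would structure the write-up so that the global bound and the local expansion are separate lemmas, with the constant identified at the very end by the bootstrap against Proposition~\ref{Prop_CondRatio1stMom}.
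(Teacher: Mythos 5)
Your route is essentially the paper's: expand $\Erw\brk{\Zwni(\gnm)^2}$ over overlap matrices in $\Bwni$, kill the overlaps at distance $\geq\eta$ from $\bar\rho$ by the Achlioptas--Naor analysis run uniformly over the admissible marginals (this is where $d^{\prime}<2(k-1)\ln(k-1)$ enters; \Prop~\ref{Prop_AchNaor} and \Lem~\ref{Lemma_AchNaor_Main}), kill the intermediate range $n^{-3/8}<\norm{\rho-\bar\rho}_2\leq\eta$ by local strict concavity (\Lem~\ref{Lem_exp_saddle}, \Prop~\ref{Prop_second_mom_new_v}), and evaluate the near-barycentre contribution by a Gaussian summation over the lattice of matrices with vanishing row and column sums, done separately for each pair of marginals in $\Bwni$ and shown independent of that pair (\Prop~\ref{Prop_second_mom_balanced_exact}, \Lem s~\ref{Lem_hessian_wormald} and \ref{Lemma_WormaldMatrix} with $\det\cH=k^{2(k-1)}$); the shift of the marginals inside the window is absorbed by reparametrising $f_2$, exactly your Euler--Maclaurin concern (\Lem s~\ref{Lemma_f2bar} and \ref{Lem_f_2s}). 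Note also that the local nondegeneracy needs $d<(k-1)^2$, which is implied by your hypothesis.

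The one genuine flaw is your preferred identification of the constant by a ``consistency/bootstrap'' against \Prop~\ref{Prop_CondRatio1stMom} and Janson's machinery: this is circular. The equality of the second-moment ratio with $\exp\brk{\sum_{l\geq2}\lambda_l\delta_l^2}$ is precisely the hypothesis that small subgraph conditioning requires as input; what the conditioning argument gives for free (see the computation in \Lem~\ref{Lem_concen_1}) is only the lower bound $\liminf_n\Erw\brk{Z^2}/\Erw\brk{Z}^2\geq\exp\brk{\sum_{l\leq L}\lambda_l\delta_l^2}$ for every $L$. Nothing in the machinery forces the matching upper bound: the ratio could converge to a strictly larger constant (variance not explained by short cycles), in which case the method simply fails rather than pinning down $C$ -- and the paper moreover stresses that Janson's theorem is not applied as a black box here. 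So the constant must be computed, and your stated alternative is the correct (and the paper's) way: the lattice Gaussian produces $\br{1-\frac{d}{(k-1)^2}}^{-\frac{(k-1)^2}{2}}$, the prefactors from Fact~\ref{Fact_varZ} versus the square of \Lem~\ref{Lem_fm_shift} leave a net factor $\exp\brk{-d/2}$, and the elementary series identity $\sum_{l\geq2}\lambda_l\delta_l^2=-\frac{(k-1)^2}{2}\ln\br{1-\frac{d}{(k-1)^2}}-\frac d2$ (expand the logarithm, the $l=1$ term being the subtracted $d/2$) finishes the identification. With the bootstrap replaced by this direct computation, your outline matches the paper's proof.
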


\noindent
The second regime of $d^{\prime}$ and  $k$ is that $k\geq k_0$ for a certain constant $k_0\geq3$ and $d^{\prime}<\dc$ (with $\dc$ the number defined in~(\ref{eq_dc})).
In this case, we replace $\Zwni$ by the slightly tweaked random variable $\Ztame$ used in the second moment arguments from~\cite{Cond,Danny}.

\begin{proposition}\label{Prop_first_mom_tame_bal}
There is a constant $k_0\geq3$ such that the following is true.
Assume that $k\geq k_0$ and $2(k-1)\ln(k-1)\leq d^{\prime}<\dc$. Then for each $\w, \nu \in  N$ and $s \in \Swn$ there exists an integer-valued random variable $0\leq\Ztame\leq\Zwni$ such that
\begin{align}\label{eq_first_moment_tame_bal}
\Erw \brk{\Ztame(\gnm)} &\sim \Erw\brk{\Zwni(\gnm)}	\qquad\mbox{and}\\
\frac{\Erw\brk{\Ztame(\gnm)^2}}{\Erw\brk{\Ztame(\gnm)}^2} &\leq (1+o(1))\exp \brk{\sum_{l \geq 2} \lambda_l \delta_l^2}.	\nonumber
\end{align}
\end{proposition}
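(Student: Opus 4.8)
The plan is to follow the enhanced second moment strategy of Coja-Oghlan and Vilenchik~\cite{Danny} (building on Bapst et al.~\cite{Cond}), adapting it to the refined ``ball'' decomposition of $\Aw$ introduced above. Recall that in the condensation regime $d^{\prime}$ is so large that the naive second moment of $\Zw$ is exponentially larger than the square of its first moment: the obstruction is a small set of ``wild'' pairs of colourings $(\sigma,\tau)$ whose overlap matrix $\rho=\rho(\sigma,\tau)$ (with entries $\rho_{ij}=|\sigma^{-1}(i)\cap\tau^{-1}(j)|/n$) is close to, but not exactly, the ``separated'' barycentre $\bar\rho=\rho^\star{\rho^\star}^{\!\top}$. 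The idea is to define $\Ztame$ by restricting $\Zwni$ to colourings $\sigma$ that are \emph{\good} in the sense of~\cite{Cond,Danny}, namely those for which the number of other colourings $\tau$ at each overlap value is not abnormally large; formally, one truncates to $\sigma$ such that for every doubly-stochastic-type deviation the count of compatible $\tau$ does not exceed its first-moment prediction by more than a sub-exponential factor.

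Concretely I would proceed in the following steps. First, import verbatim the definition of the set $\Dg$ of \good\ colourings and the random variable $\Zkg$ from~\cite{Cond} (or~\cite{Danny}), and set $\Ztame(\gnm)$ to be the number of \good\ $(\omega,\nu,s)$-balanced colourings, i.e.\ those $\sigma$ with $\rho(\sigma)\in\Awni$ that additionally lie in $\Dg$. Second, establish the first-moment statement $\Erw[\Ztame]\sim\Erw[\Zwni]$: this follows because the ``wild'' colourings excluded by the \good ness condition are exponentially rare in expectation, a fact proven in~\cite{Cond} for the global count $Z_k$; one checks that the same large-deviations bound survives the restriction to the tiny window $\Awni$, since the colour density is pinned to within $1/(\nu\sqrt n)$ of $\rho^\star$ and the entropy/rate function is smooth there — the $n$-subexponential corrections from the window are swallowed by the $\sim$. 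Third, and this is the heart of the matter, prove the second moment bound: write $\Erw[\Ztame^2]=\sum_\rho \Erw[\text{number of \good\ pairs with overlap }\rho]$ where $\rho$ ranges over (discretized) $k\times k$ overlap matrices whose row and column marginals lie in the balanced window, and split the sum according to whether $\rho$ is close to $\bar\rho$ or not. Away from $\bar\rho$, the \good ness truncation is exactly what kills the dangerous exponential contributions, by the key lemma of~\cite{Danny} that for $d^{\prime}<\dc$ the function $\rho\mapsto(\text{entropy of }\rho)$ restricted to \good\ configurations is maximized only at $\bar\rho$; near $\bar\rho$, one performs a Laplace/Gaussian expansion of the summand around $\rho=\bar\rho$, exactly as in the Achlioptas--Naor computation, and the local sum contributes $(1+o(1))\Erw[\Ztame]^2$ times a constant. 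Fourth, account for the short-cycle correction: the factor $\exp(\sum_{l\ge2}\lambda_l\delta_l^2)$ arises precisely from the second-order term in the Gaussian expansion near $\bar\rho$ — it is the contribution of the ``eigenvalue directions'' of the overlap matrix that correspond to the cyclic structure — and this is identical to what appears in Proposition~\ref{Prop_ratio_sec_first}; so one reuses that computation, now only as an upper bound because \good ness was imposed.

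The main obstacle I expect is the \emph{uniformity} of all estimates over the growing collection of windows $\Awni$ and centres $s\in\Swn$, whose number grows with $\nu$. In~\cite{Cond,Danny} one deals with a single balanced region, but here the second-moment bound must hold for every $s$ with the same $(1+o(1))$, including those $s$ with $\|s\|_1$ of order $\omega\nu$, i.e.\ with colour density as far as $\approx\omega/\sqrt n$ from $\rho^\star$. At that scale the quadratic form governing the Laplace expansion near $\bar\rho$ is perturbed, and one must verify that its Hessian stays uniformly positive definite and that the off-diagonal ``wild'' region remains exponentially suppressed uniformly in $s$; this requires a careful Taylor expansion of the rate function with explicit control of the error in terms of $\|\rho-\bar\rho\|$, $\omega$, and $\nu$, and is where the bulk of the technical work lies. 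A secondary difficulty is checking that the \good ness condition of~\cite{Cond} — originally phrased for colourings with density exactly $\rho^\star$ up to $o(n^{-1/2})$ — can be ported to density in the $\omega/\sqrt n$ window without losing the first-moment domination; this should follow from continuity of the relevant rate functions, but needs to be spelled out. Once these uniformity points are settled, the conclusion~\eqref{eq_first_moment_tame_bal} follows by combining the local Gaussian contribution with the (negligible) contribution from the wild region, exactly mirroring the proof of Proposition~\ref{Prop_ratio_sec_first}.
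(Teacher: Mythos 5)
Your strategy is essentially the one the paper follows: define $\Ztame$ by restricting $\Zwni$ to colourings that are separable and have a cluster $\cC(\gnm,\sigma)$ of size at most $\Erw\brk{\Zwni(\gnm)}/\exp\brk{\eps n}$ (the ``tame''/``good'' truncation of \cite{Cond,Danny}), show the first moment is asymptotically unaffected, kill the overlaps far from $\bar\rho$ via the analysis of $\bar f_2$ on separable, stable overlap matrices from \cite[\Sec~4]{Danny}, and evaluate the neighbourhood of $\bar\rho$ by the same Laplace/Gaussian computation as in the low-density regime (in the paper this is exactly the reuse of \Prop s~\ref{Prop_second_mom_new_v} and~\ref{Prop_second_mom_balanced_exact}, combined with \Prop~\ref{Prop_first_mom}), yielding the constant $\exp\brk{\sum_{l\ge2}\lambda_l\delta_l^2}=\br{1-\tfrac{d}{(k-1)^2}}^{-(k-1)^2/2}\exp\brk{-d/2}$ as an upper bound.

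The one genuine gap is precisely the step you flag but dispose of too quickly: porting the goodness conditions from the exactly balanced setting of \cite{Cond,Danny} to colour densities at distance up to $\w/\sqrt n$ from $\rho^\star$. Continuity of rate functions is enough for the separability estimate, since in \cite{Danny} that is itself a first-moment union bound and tolerates the $\exp\brk{O(\w^2)}$ perturbation coming from the shifted densities. But for $(2k-1)\ln k-2<d^{\prime}<\dc$ you also need the cluster-size bound for the planted colouring, and \cite[\Cor~1.1]{Cond} is a \whp\ structural statement about the planted model with a \emph{perfectly balanced} planted colouring, not a rate-function computation, so a continuity argument does not transfer it. The paper instead uses a coupling/surgery device: add $k\lceil\alpha\rceil=O(\w\sqrt n)$ auxiliary vertices so that the planted colouring becomes perfectly balanced, apply the results of \cite{Danny,Cond} to that model (Facts~\ref{Fact_samedist} and~\ref{Fact_samedist_1}), then delete the auxiliary vertices, using that each deletion changes $\ln|\cC|$ by $O(\ln n)$, hence in total by $O(\sqrt n\ln n)=o(n)$, and relaxing the separability parameter from $\ln^{20}k/k$ to $\ln^{21}k/k$ (\Lem s~\ref{Lemma_separable} and~\ref{Lemma_Clustersize}). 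Some such transfer mechanism is needed to make your step two rigorous. A minor remark: the uniformity over $s\in\Swn$ that you list as the main obstacle is not really one, since $\w$ and $\nu$ are held fixed as $n\to\infty$ (the limits are taken in that order), so the number of windows is bounded and the proposition is in any case stated for each fixed $s$.
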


\noindent
The proofs of \Prop s~\ref{Prop_ratio_sec_first} and~\ref{Prop_first_mom_tame_bal} 
appear at the end of \Sec~\ref{Sec_second_moment_Paper4}.
In order to apply small subgraph conditioning to the random variable $\Ztame$, we need to investigate the impact of $C_{l,n}$ on the first moment of $\Ztame$. Thus, we need a similar result as \Prop~\ref{Prop_CondRatio1stMom} for $\Ztame$.
Fortunately, instead of having to reiterate the proof of \Prop~\ref{Prop_CondRatio1stMom}, we obtain the following by combining \Prop~\ref{Prop_CondRatio1stMom} with~\eqref{eq_first_moment_tame_bal}:

\begin{corollary} \label{Cor_first_moment_tame_bal}
Let $c_2,\ldots,c_L$ be non-negative integers. With the assumptions and notation of \Prop~\ref{Prop_first_mom_tame_bal} we have
\begin{equation*} 
\frac{\Erw\brk{\Ztame(\gnm)|\forall 2\leq l\leq L:C_{l,n}=c_l}}{\Erw\brk{\Ztame(\gnm)}}\sim \prod_{l=2}^L\brk{1+\delta_l}^{c_l}\exp\brk{-\delta_l\lambda_l}.
\end{equation*}
\end{corollary}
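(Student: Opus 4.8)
The plan is to exploit the pointwise sandwich $0\le\Ztame\le\Zwni$ supplied by \Prop~\ref{Prop_first_mom_tame_bal} together with the first-moment asymptotics $\Erw\brk{\Ztame(\gnm)}\sim\Erw\brk{\Zwni(\gnm)}$ from \eqref{eq_first_moment_tame_bal}. These two facts say that $\Zwni$ and $\Ztame$ differ by a non-negative random variable whose expectation is $o\br{\Erw\brk{\Zwni(\gnm)}}$, and the point is that this control passes unchanged to conditional expectations given the short-cycle counts, because that conditioning event has probability bounded away from $0$.

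Throughout, write $\cC=\cbc{C_{l,n}=c_l\text{ for all }2\le l\le L}$ for the conditioning event. First I would record that, by Fact~\ref{Fact_cyc}, $\pr\brk{\cC}\to\prod_{l=2}^L\pr\brk{\Po(\lambda_l)=c_l}$; since $d=2m/n\sim d^{\prime}>0$ forces each $\lambda_l=d^l/(2l)$ to stay bounded away from $0$, every factor on the right is strictly positive, so there is a constant $c_\star>0$ with $\pr\brk{\cC}\ge c_\star$ for all large $n$. Next, using $0\le\Zwni-\Ztame$ and $\mathbbm{1}_{\cC}\le 1$,
\begin{align*}
0\le\Erw\brk{(\Zwni-\Ztame)\mathbbm{1}_{\cC}}\le\Erw\brk{\Zwni(\gnm)}-\Erw\brk{\Ztame(\gnm)}=o\br{\Erw\brk{\Zwni(\gnm)}}=o\br{\Erw\brk{\Ztame(\gnm)}},
\end{align*}
where the last two equalities are \eqref{eq_first_moment_tame_bal}. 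Hence $\Erw\brk{\Ztame(\gnm)\mathbbm{1}_{\cC}}=\Erw\brk{\Zwni(\gnm)\mathbbm{1}_{\cC}}+o\br{\Erw\brk{\Ztame(\gnm)}}$, and dividing by $\pr\brk{\cC}\ge c_\star$ turns this into
\begin{align*}
\Erw\brk{\Ztame(\gnm)\mid\cC}=\Erw\brk{\Zwni(\gnm)\mid\cC}+o\br{\Erw\brk{\Ztame(\gnm)}}.
\end{align*}

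To finish, I would feed in \Prop~\ref{Prop_CondRatio1stMom} for $\Zwni$, which gives $\Erw\brk{\Zwni(\gnm)\mid\cC}\sim P\cdot\Erw\brk{\Zwni(\gnm)}$ with $P:=\prod_{l=2}^L\br{1+\delta_l}^{c_l}\exp\br{-\delta_l\lambda_l}$; since $\abs{\delta_l}=(k-1)^{-(l-1)}<1$ and the $\lambda_l$ converge, $P$ (which a priori depends on $n$ through $\lambda_l$) is a positive quantity bounded away from $0$ and $\infty$ uniformly in $n$. Combining with $\Erw\brk{\Zwni(\gnm)}\sim\Erw\brk{\Ztame(\gnm)}$ and the previous display,
\begin{align*}
\Erw\brk{\Ztame(\gnm)\mid\cC}=P\cdot\Erw\brk{\Ztame(\gnm)}(1+o(1))+o\br{\Erw\brk{\Ztame(\gnm)}}=P\cdot\Erw\brk{\Ztame(\gnm)}(1+o(1)),
\end{align*}
where the additive error is absorbed into the multiplicative factor precisely because $P$ is bounded below; dividing through by $\Erw\brk{\Ztame(\gnm)}$ yields the statement of \Cor~\ref{Cor_first_moment_tame_bal}.

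I do not expect a genuine obstacle here: the argument is entirely soft once \Prop s~\ref{Prop_CondRatio1stMom} and~\ref{Prop_first_mom_tame_bal} are in hand. The only two points requiring a little care are that $\pr\brk{\cC}$ is bounded away from $0$, which is where $d>0$ and Fact~\ref{Fact_cyc} enter, and that the finite product $P$ is bounded away from $0$ and $\infty$ uniformly in $n$, so that an additive error of order $o\br{\Erw\brk{\Ztame(\gnm)}}$ is interchangeable with a multiplicative factor $1+o(1)$; this uses $\abs{\delta_l}<1$ together with the summability $\sum_{l}\lambda_l\delta_l^2<\infty$ recorded in \Prop~\ref{Prop_CondRatio1stMom}.
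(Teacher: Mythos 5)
Your argument is correct and rests on exactly the same ingredients as the paper's proof of \Cor~\ref{Cor_first_moment_tame_bal}: the sandwich $0\le\Ztame\le\Zwni$, the asymptotic equality $\Erw\brk{\Ztame(\gnm)}\sim\Erw\brk{\Zwni(\gnm)}$ from \eqref{eq_first_moment_tame_bal}, the fact that $\pr\brk{\cC}$ is bounded away from $0$ via Fact~\ref{Fact_cyc}, and \Prop~\ref{Prop_CondRatio1stMom}, together with the observation that the product $\prod_{l=2}^L(1+\delta_l)^{c_l}\exp(-\delta_l\lambda_l)$ is bounded away from $0$ and $\infty$. The only difference is presentational: the paper obtains the lower bound by contradiction through the decomposition $\Erw\brk{\Ztame}=\Erw\brk{\Ztame\mid\cC}\pr\brk{\cC}+\Erw\brk{\Ztame\mid\neg\cC}\pr\brk{\neg\cC}$, while you bound $\Erw\brk{(\Zwni-\Ztame)\mathbbm{1}_{\cC}}$ directly, which is a slightly more streamlined rendering of the same argument.
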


As the proof is nearly identical to the one in \cite{aco_plantsil}, we defer it to Appendix A.

\noindent The aim is now to derive \Thm~\ref{Thm_main} from \Prop s~\ref{Prop_first_mom}-\ref{Prop_ratio_sec_first}. 
The key observation is that the variance of the random variables $\Zwni$ is affected by the presence of cycles of bounded length and that this is the only significant influence. As a consequence, conditioning on the small cycle counts up to some preselected length reduces the variance of $\Zwni$. What is maybe surprising is that conditioning on the number of enough small cycles reduces the variance to any desired fraction of $\Erw[\Zwni]^2$.

As done in \cite{aco_Wormald, Rassmann}, the arguments we use are similar to the small subgraph conditioning from \cite{Janson, RobinsonWormald}. But we do not refer to any technical statements from \cite{Janson, RobinsonWormald} directly because instead of working only with the random variable $Z_k$ we need to control all $\Zwni$ for fixed $\w,\nu \in N$ simultaneously. In fact, ultimately we have to take $\nu\to\infty$ and $\w\to\infty$ as well.
Our line of argument follows the path beaten in \cite{aco_Wormald, Rassmann} and the following three lemmas are nearly identical to the ones derived there.

For $L>2$, let $\cF_L=\cF_{L,n}(d,k)$ be the $\sigma$-algebra generated by the random variables $C_{l,n}$ with $2\le l\le L$.
The set of all graphs can be divided into groups according to the small cycle counts: For each $L \ge 2$, the decomposition of the variance of $\Zwni$ yields
$$\Var\brk{\Zwni(\gnm)}=\Var\brk{\Erw\brk{\Zwni(\gnm)|\cF_L}}+\Erw\brk{\Var\brk{\Zwni(\gnm)|\cF_L}},$$
meaning that the variance can be written as the variance of the group mean plus the expected value of the variance within a group.
The term $\Var\brk{\Erw\brk{\Zwni(\gnm)|\cF_L}}$ accounts for the amount of variance induced by the fluctuations of the number of cycles of length at most $L$. The strategy when using small subgraph conditioning is to bound the second summand, which is the expected conditional variance
$$\Erw\brk{\Var\brk{\Zwni(\gnm)|\cF_L}}=\Erw\brk{\Erw\brk{{\Zwni(\gnm)}^2|\cF_L}-\Erw\brk{\Zwni(\gnm)|\cF_L}^2}.$$
In the following lemma we show that in fact in the limit of large $L$ and $n$ this quantity is negligible. This implies that conditioned on the number of short cycles the variance vanishes and thus the limiting distribution of $\ln \Zwni$ is just the limit of $\ln\Erw\brk{\Zwni|\cF_L}$ as $n,L \to \infty$. This limit is determined by the joint distribution of the number of short cycles.
\begin{lemma}\label{Lem_concen_1}
Let $k \ge 3$ and $d^{\prime} \in \br{0,\infty}$. For any $\w,\nu \in N$ and $s\in \Swn$, we have
\begin{align*}
 \limsup_{L\to\infty} \limsup_{n\to\infty}\Erw\brk{\frac{\Erw\brk{{\Zwni(\gnm)}^2|\cF_L}-\Erw\brk{\Zwni(\gnm)|\cF_L}^2}{\Erw\brk{\Zwni(\gnm)}^2}}=0.
\end{align*}
\end{lemma}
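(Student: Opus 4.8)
The strategy is the standard small-subgraph-conditioning computation, but carried out by hand (rather than by citing \cite{Janson}) because we must control each $\Zwni$ separately. The starting point is the identity
$$\Erw\brk{\Erw\brk{\Zwni^2\mid\cF_L}-\Erw\brk{\Zwni\mid\cF_L}^2}=\Erw\brk{\Zwni^2}-\Erw\brk{\Erw\brk{\Zwni\mid\cF_L}^2},$$
so after dividing by $\Erw\brk{\Zwni}^2$ it suffices to show
$$\limsup_{L\to\infty}\limsup_{n\to\infty}\frac{\Erw\brk{\Erw\brk{\Zwni\mid\cF_L}^2}}{\Erw\brk{\Zwni}^2}\ \geq\ \exp\Bigl(\sum_{l\geq2}\lambda_l\delta_l^2\Bigr),$$
because Proposition~\ref{Prop_ratio_sec_first} (resp.\ the bound in Proposition~\ref{Prop_first_mom_tame_bal}, which gives the matching upper bound only in the case where $\Ztame$ is used, but here the statement is for $\Zwni$ and $d'<2(k-1)\ln(k-1)$ via Proposition~\ref{Prop_ratio_sec_first}) already gives $\Erw\brk{\Zwni^2}/\Erw\brk{\Zwni}^2\sim\exp(\sum_{l\geq2}\lambda_l\delta_l^2)$, which is finite by the first assertion of Proposition~\ref{Prop_CondRatio1stMom}.

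First I would compute $\Erw\brk{\Zwni\mid\cF_L}$ explicitly. Since $\cF_L$ is generated by the finitely many cycle counts $C_{2,n},\dots,C_{L,n}$, and these converge jointly to independent Poisson variables $\Po(\lambda_l)$ by Fact~\ref{Fact_cyc}, Proposition~\ref{Prop_CondRatio1stMom} gives
$$\frac{\Erw\brk{\Zwni\mid C_{2,n}=c_2,\dots,C_{L,n}=c_L}}{\Erw\brk{\Zwni}}\sim\prod_{l=2}^{L}(1+\delta_l)^{c_l}\exp(-\delta_l\lambda_l).$$
Substituting the random values $C_{l,n}$ for $c_l$, squaring, and taking expectations, one gets
$$\frac{\Erw\brk{\Erw\brk{\Zwni\mid\cF_L}^2}}{\Erw\brk{\Zwni}^2}\sim\Erw\Bigl[\prod_{l=2}^{L}(1+\delta_l)^{2C_{l,n}}\exp(-2\delta_l\lambda_l)\Bigr]\xrightarrow{n\to\infty}\prod_{l=2}^{L}\exp\bigl(\lambda_l((1+\delta_l)^2-1)-2\lambda_l\delta_l\bigr)=\prod_{l=2}^{L}\exp(\lambda_l\delta_l^2),$$
using the probability generating function $\Erw[z^{\Po(\lambda)}]=\exp(\lambda(z-1))$ with $z=(1+\delta_l)^2$ and the Poisson limit. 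Letting $L\to\infty$ this tends to $\exp(\sum_{l\geq2}\lambda_l\delta_l^2)$, which matches the second moment exactly, proving the lemma.

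The technical care needed is in the interchange of limits and in making the ``$\sim$'' applications uniform. Two points require attention. First, Proposition~\ref{Prop_CondRatio1stMom} is an asymptotic statement for each \emph{fixed} tuple $(c_2,\dots,c_L)$, so to pass to the expectation over the random $C_{l,n}$ one needs a dominated-convergence argument: I would truncate to the event $\{C_{l,n}\leq C_{l,n}^{\max}\}$ for a slowly growing bound, use Fact~\ref{Fact_cyc} together with standard tail bounds on $C_{l,n}$ (each $C_{l,n}$ is dominated by a $\Bin$-type variable with mean $O(\lambda_l)$) to control the contribution of large cycle counts, and note that $\Erw\brk{\Erw\brk{\Zwni\mid\cF_L}^2}\leq\Erw\brk{\Zwni^2}$ always, which gives an a priori bound legitimising the interchange. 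Second, the finiteness $\sum_l\lambda_l\delta_l^2<\infty$ from Proposition~\ref{Prop_CondRatio1stMom} is what makes $L\to\infty$ harmless and guarantees the target quantity $\exp(\sum_{l\geq2}\lambda_l\delta_l^2)$ is finite, so the equality of the $L\to\infty$ limit with the second moment is an honest equality of finite numbers. \textbf{The main obstacle} is precisely this uniform-integrability/interchange-of-limits bookkeeping: getting the error terms in Proposition~\ref{Prop_CondRatio1stMom} to be summable in the $c_l$'s so that expectation against the (near-)Poisson law commutes with the $n\to\infty$ limit. This is the step where the analogues in \cite{aco_Wormald,Rassmann} expend most of their effort, and I would follow their template, replacing $Z_k$ by $\Zwni$ throughout.
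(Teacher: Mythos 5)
Your proposal is correct and follows essentially the same route as the paper's proof: decompose the variance, lower-bound $\Erw\brk{\Erw\brk{\Zwni(\gnm)\mid\cF_L}^2}$ by restricting to bounded cycle counts and combining \Prop~\ref{Prop_CondRatio1stMom} with Fact~\ref{Fact_cyc} (your Poisson generating-function computation is exactly the paper's truncated sum), and then compare with the second-moment ratio from \Prop~\ref{Prop_ratio_sec_first}. The uniform-integrability bookkeeping you single out as the main obstacle is in fact not needed: only a lower bound on $\Erw\brk{\Erw\brk{\Zwni(\gnm)\mid\cF_L}^2}$ enters (so one truncates to $c_l\le B$ and simply discards the remaining nonnegative terms), which is precisely what the paper does, and the exact limit then follows for free from the a priori bound $\Erw\brk{\Erw\brk{\Zwni(\gnm)\mid\cF_L}^2}\le\Erw\brk{\Zwni(\gnm)^2}$ that you already note.
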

\begin{proof}
Fix $\w, \nu\in N$ and set $Z_s=\Zwni(\gnm)$. Using Fact~\ref{Fact_cyc} and equation (\ref{eq_CondRatio1stMom}) from \Prop~\ref{Prop_CondRatio1stMom} we can choose for any $\eps>0$ a constant $B=B(\eps)$ and $L\ge L_0(\eps)$ large enough such that for each large enough $n\ge n_0(\eps, B, L)$ we have for any $s\in\Swn$:
 \begin{align}\label{eq_lem_concen_1_1}
\nonumber  \Erw\brk{\Erw\brk{Z_s|\cF_L}^2}&\ge \sum_{c_1,...,c_L\le B} \Erw\brk{Z_s|\forall 2 \le l\le L: C_{l,n}=c_l}^2 \pr\brk{\forall 2 \le l\le L: C_{l,n}=c_l}\\
\nonumber   &\ge \exp\brk{-\eps}\Erw\brk{Z_s}^2 \sum_{c_1,...,c_L\le B} \ \prod_{l=2}^L \brk{(1+\delta_l)^{c_l}\exp\brk{-\lambda_l\delta_l}}^2\pr\brk{\Po(\lambda_l)=c_l}\\
\nonumber   &= \exp\brk{-\eps}\Erw\brk{Z_s}^2 \sum_{c_1,...,c_L\le B} \ \prod_{l=2}^L \frac{\brk{(1+\delta_l)^2\lambda_l}^{c_l}}{c_l!\exp\brk{2\lambda_l\delta_l+\lambda_l}}\\
            &\ge \Erw\brk{Z_s}^2 \exp\brk{-2\eps+ \sum_{l=2}^L \delta_l^2\lambda_l}.
 \end{align}
The tower property for conditional expectations and the standard formula for the decomposition of the variance yields
\begin{align*}
 \Erw\brk{Z_s^2}&=\Erw\brk{\Erw\brk{Z_s^2|\cF_L}}=\Erw\brk{\Erw\brk{Z_s^2|\cF_L}-\Erw\brk{Z_s|\cF_L}^2}+ \Erw\brk{\Erw\brk{Z_s|\cF_L}^2}
\end{align*}
and thus, using (\ref{eq_lem_concen_1_1}) we have
\begin{align}\label{eq_lem_concen_1_2}
\frac{\Erw\brk{\Erw\brk{Z_s^2|\cF_L}-\Erw\brk{Z_s|\cF_L}^2}}{\Erw\brk{Z_s}^2 }\le \frac{ \Erw\brk{Z_s^2}}{\Erw\brk{Z_s}^2}- \exp\brk{-2\eps+ \sum_{l=2}^L \delta_l^2\lambda_l}.
\end{align}
Finally, the estimate $\exp[-x]\ge 1-x$ for $|x|<1/8$ combined with (\ref{eq_lem_concen_1_2}) and \Prop~\ref{Prop_ratio_sec_first} implies that for large enough $\nu,n,L$ and each $s\in\Swn$ we have
\begin{align*}
 \frac{\Erw\brk{\Erw\brk{Z_s^2|\cF_L}-\Erw\brk{Z_s|\cF_L}^2}}{\Erw\brk{Z_s}^2 }\le 2\eps \exp\brk{\sum_{l=2}^\infty \delta_l^2\lambda_l}.
\end{align*}
As this holds for any $\eps>0$ and by equation \eqref{eq_lambdelt} the expression $\exp\brk{\sum_{l=2}^\infty \delta_l^2\lambda_l}$ is bounded, the proof of the lemma is completed by first taking $n\to\infty$ and then $L\to\infty$.
\end{proof}

\begin{lemma}\label{Lem_concen_3}
	For any $\alpha>0$, we have
	\begin{align*}
	\limsup_{L\to\infty}\limsup_{n\to\infty} \pr\brk{|Z_k(\gnm)-\Erw\brk{Z_k(\gnm)|\cF_L}|>\alpha\Erw\brk{Z_k(\gnm)}}=0.
	\end{align*}
\end{lemma}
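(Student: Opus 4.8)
The plan is to climb down from $Z_k$ to the balanced count $\Zw$ via the first‑moment comparison of \Prop~\ref{Prop_first_mom}, then split $\Zw$ into the finitely many pieces $\Zwni$ and control each of them with a conditional Chebyshev inequality fed by \Lem~\ref{Lem_concen_1}; finally reassemble everything with a triangle inequality. Throughout, $Z_k$ and $\cF_L$ do not depend on $\w$ or $\nu$, so these parameters are free auxiliary quantities that we choose inside the proof.

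First I would fix $\alpha>0$ and a small parameter $\eta>0$. By \Prop~\ref{Prop_first_mom} there is $\w=\w(\eta)\in N$ with $\liminf_{n\to\infty}\Erw[\Zw(\gnm)]/\Erw[Z_k(\gnm)]\ge 1-\eta$, and since $\Zw\le Z_k$ pointwise this gives $\Erw[Z_k-\Zw]\le 2\eta\,\Erw[Z_k]$ for all large $n$. Applying Markov's inequality to the nonnegative variable $Z_k-\Zw$, and to the nonnegative variable $\Erw[Z_k-\Zw\mid\cF_L]$ (whose mean is $\Erw[Z_k-\Zw]$ by the tower rule), yields, for all large $n$,
\[
\pr\!\left[Z_k-\Zw>\tfrac{\alpha}{3}\Erw[Z_k]\right]\le \tfrac{6\eta}{\alpha},
\qquad
\pr\!\left[\Erw[Z_k-\Zw\mid\cF_L]>\tfrac{\alpha}{3}\Erw[Z_k]\right]\le \tfrac{6\eta}{\alpha}.
\]

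Next I would fix any $\nu\in N$ (say $\nu=1$) and set $M=|\Swn|$, a finite number depending only on $k,\w,\nu$ and not on $n$. For each $s\in\Swn$, the conditional Chebyshev inequality followed by taking $\cF_L$‑expectations gives
\[
\pr\!\left[\,\big|\Zwni-\Erw[\Zwni\mid\cF_L]\big|>\tfrac{\alpha}{3M}\Erw[Z_k]\,\right]
\le \frac{9M^2}{\alpha^2}\cdot\frac{\Erw\!\big[\Erw[\Zwni^2\mid\cF_L]-\Erw[\Zwni\mid\cF_L]^2\big]}{\Erw[Z_k]^2}
\le \frac{9M^2}{\alpha^2}\cdot\frac{\Erw\!\big[\Erw[\Zwni^2\mid\cF_L]-\Erw[\Zwni\mid\cF_L]^2\big]}{\Erw[\Zwni]^2},
\]
where in the last step I used $\Erw[\Zwni]\le\Erw[Z_k]$. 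By \Lem~\ref{Lem_concen_1} the final ratio has $\limsup_{L\to\infty}\limsup_{n\to\infty}$ equal to $0$, so a union bound over the $M$ values of $s$, together with $\Zw=\sum_{s\in\Swn}\Zwni$, $\Erw[\Zw\mid\cF_L]=\sum_{s}\Erw[\Zwni\mid\cF_L]$ and the triangle inequality, gives $\limsup_{L}\limsup_{n}\pr[\,|\Zw-\Erw[\Zw\mid\cF_L]|>\tfrac{\alpha}{3}\Erw[Z_k]\,]=0$.

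Finally I would combine the three events through
\[
\big|Z_k-\Erw[Z_k\mid\cF_L]\big|\le (Z_k-\Zw)+\big|\Zw-\Erw[\Zw\mid\cF_L]\big|+\Erw[Z_k-\Zw\mid\cF_L],
\]
which is valid because $Z_k\ge\Zw$ a.s. On the intersection of the complements of the three bad events the right-hand side is at most $\alpha\Erw[Z_k]$, whence $\limsup_{L\to\infty}\limsup_{n\to\infty}\pr[\,|Z_k-\Erw[Z_k\mid\cF_L]|>\alpha\Erw[Z_k]\,]\le \tfrac{12\eta}{\alpha}$; since the left-hand side does not depend on $\eta$ and $\eta>0$ was arbitrary (with $\w$ chosen accordingly), it must be $0$. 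I do not expect a genuine obstacle here: the substantive variance estimate is exactly \Lem~\ref{Lem_concen_1}, and the only points requiring care are that the three limits ($n\to\infty$, then $L\to\infty$, with $\w$ fixed first) are taken in a legitimate order, and that normalizing the Chebyshev bound by $\Erw[Z_k]^2$ rather than $\Erw[\Zwni]^2$ is harmless because $\Erw[\Zwni]\le\Erw[Z_k]$.
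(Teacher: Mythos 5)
Your proposal is correct and follows essentially the same route as the paper: reduce $Z_k$ to $\Zw$ via \Prop~\ref{Prop_first_mom} and Markov's inequality, control $|\Zw-\Erw[\Zw\mid\cF_L]|$ through the pieces $\Zwni$ by a conditional Chebyshev bound fed by \Lem~\ref{Lem_concen_1}, and reassemble with the triangle inequality. The only (harmless) deviation is in the aggregation step: where the paper works with the truncated deviations $X_s$ and the comparability $\Erw[\Zwni]\le\beta\Erw[Z_k]/|\Swn|$, you use a plain union bound at threshold $\tfrac{\alpha}{3M}\Erw[Z_k]$ with $M=|\Swn|$, which merely costs a factor $M^2$ that is irrelevant because $\w,\nu$ (hence $M$) are fixed before taking $n\to\infty$ and then $L\to\infty$.
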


\begin{proof}
	To unclutter the notation, we set $Z_k=Z_k(\gnm)$ and $\Zw=\Zw(\gnm)$. First we observe that \Prop~\ref{Prop_first_mom} implies that for any $\alpha>0$ we can choose $\w \in N$ large enough such that
	\begin{align}\label{eq_lem_concen_3_1}
	\liminf_{n\to\infty} \Erw\brk{\Zw}>(1-\alpha^2)\Erw\brk{Z_k}.
	\end{align}
	We let $\nu \in N$. To prove the statement, we need to get a handle on the cases where the variables $\Zwni(\gnm)$ deviate strongly from their conditional expectation $\Erw\brk{\Zwni(\gnm)|\cF_L}$. We let $Z_s=\Zwni(\gnm)$ and define 
	\begin{align*}
	X_s=|Z_s-\Erw\brk{Z_s|\cF_L}|\cdot \mathbf{1}_{\{|Z_s-\Erw\brk{Z_s|\cF_L}|>\alpha \Erw\brk{Z_s}\}}
	\end{align*}
	and $X=\sum_{s \in \Swn} X_s$. Then these definitions directly yield 
	\begin{align}\label{eq_lem_concen_3_2}
	\pr\brk{X<\alpha  \Erw\brk{\Zw}}\le \pr\brk{\left|\Zw-\Erw\brk{\Zw|\cF_L}\right|<2\alpha \Erw\brk{\Zw}}.
	\end{align}
	By the definition of the $X_s$'s and Chebyshev's inequality it is true for every $s$ that
	\begin{align*}
	\Erw\brk{X_s|\cF_L}&\le \sum_{j\ge 0} 2^{j+1} \alpha \Erw\brk{Z_s}\pr\brk{\left|Z_s-\Erw\brk{Z_s|\cF_L}\right|>2^j\alpha \Erw\brk{Z_s}}\le \frac{4\Var\brk{Z_s|\cF_L}}{\alpha \Erw\brk{Z_s}}.
	\end{align*}
Hence, using that with \Prop~\ref{Prop_first_mom} there is a number $\beta=\beta(\alpha, \w)$ such that $\Erw\brk{Z_s}/\Erw\brk{Z_k}\le \beta/(|\Swn|)$ for all $s\in \Swn$ and $n$ large enough, we have
	\begin{align*}
	\Erw\brk{X|\cF_L}\le \sum_{s \in \Swn}\frac{4\Var\brk{Z_s|\cF_L}}{\alpha \Erw\brk{Z_s}}\le  \frac{4\beta \Erw\brk{Z_k}}{\alpha|\Swn|} \sum_{s \in \Swn}\frac{\Var\brk{Z_s|\cF_L}}{\Erw\brk{Z_s}^2}.
	\end{align*}
Taking expectations, choosing $\eps=\eps(\alpha, \beta, \w)$ small enough and applying \Lem~\ref{Lem_concen_1}, we obtain
	\begin{align}\label{eq_lem_concen_3_4}
	\Erw\brk X =\Erw\brk{\Erw\brk{X|\cF_L}} \le \frac{4\beta \Erw\brk{Z_k}}{\alpha|\Swn|} \sum_{s \in \Swn}\frac{\Erw\brk{\Var\brk{Z_s|\cF_L}}}{\Erw\brk{Z_s}^2} \le \frac{4\beta\eps\Erw\brk{Z_k}}{\alpha} \le \alpha^2\Erw\brk{Z_k}.
	\end{align}
	Using (\ref{eq_lem_concen_3_2}), Markov's inequality, (\ref{eq_lem_concen_3_4}) and (\ref{eq_lem_concen_3_1}), it follows that
	\begin{align}\label{eq_lem_concen_3_5}
	\pr\brk{\left|\Zw-\Erw\brk{\Zw|\cF_L}\right|<2\alpha \Erw\brk{\Zw}}\ge 1-2\alpha.
	\end{align}
	
	\noindent Finally, the triangle inequality combined with Markov's inequality and equations (\ref{eq_lem_concen_3_1}) and (\ref{eq_lem_concen_3_5}) yields
	\begin{align*}
	\pr\brk{\left|Z_k-\Erw\brk{Z_k|\cF_L}\right|>\alpha\Erw\brk{Z_k}} & \le\pr\brk{\left|Z_k-\Zw\right|+\left|\Zw-\Erw\brk{\Zw|\cF_L}\right|+\left|\Erw\brk{\Zw|\cF_L}-\Erw\brk{Z_k|\cF_L}\right|>\alpha\Erw\brk{Z_k}}\\
	&\le 3\alpha + \alpha/3 + 3\alpha < 7\alpha,
	\end{align*}
	which proves the statement. 
\end{proof}

\begin{lemma}\label{Lem_concen_2}
	Let 
	\begin{align}\label{eq_lem_concen_2_0}
	U_L&=\sum_{l=2}^L C_{l,n}\ln(1+\delta_l)-\lambda_l\delta_l.
	\end{align}
	Then $\limsup_{L\to\infty} \limsup_{n\to\infty} \Erw\brk{|U_L|}<\infty$ and further for any $\eps>0$ we have
	
	\begin{align}\label{eq_lem_concen_2_1}
	\limsup_{L\to\infty}\limsup_{n\to\infty} \pr\brk{|\ln \Erw\brk{Z_k(\gnm)|\cF_L}-\ln \Erw\brk{Z_k(\gnm)}-U_L|>\eps}=0
	\end{align}
\end{lemma}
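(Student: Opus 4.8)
The plan is to derive both assertions of \Lem~\ref{Lem_concen_2} from \Prop~\ref{Prop_first_mom}, \Prop~\ref{Prop_CondRatio1stMom} and \Fact~\ref{Fact_cyc}, passing through the balanced variable $\Zw$ as an intermediary exactly as in \Lem~\ref{Lem_concen_3}. First I would dispose of the claim $\limsup_L\limsup_n\Erw[|U_L|]<\infty$: since $C_{l,n}\to\Po(\lambda_l)$ and $\Erw[C_{l,n}]\to\lambda_l$ (\Fact~\ref{Fact_cyc}), we have $\Erw[C_{l,n}\ln(1+\delta_l)-\lambda_l\delta_l]\to\lambda_l(\ln(1+\delta_l)-\delta_l)$ as $n\to\infty$. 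Because $\ln(1+\delta_l)-\delta_l=-\delta_l^2/2+O(\delta_l^3)$ and $\sum_l\lambda_l\delta_l^2<\infty$ by \Prop~\ref{Prop_CondRatio1stMom}, the deterministic shifts $\lambda_l\delta_l$ are absolutely summable; bounding $\Erw|U_L|\le\sum_{l=2}^L\abs{\ln(1+\delta_l)}\Erw[C_{l,n}]+\sum_{l=2}^L\lambda_l\abs{\delta_l}$ and using $\abs{\ln(1+\delta_l)}=\Theta(\abs{\delta_l})$ together with $\sum_l\lambda_l\abs{\delta_l}\le\sum_l\lambda_l\delta_l^2\cdot(k-1)<\infty$ gives a bound uniform in $L$ and (for large $n$) in $n$.

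For the convergence in probability \eqref{eq_lem_concen_2_1}, the key identity is that conditioning on $\cF_L$ rescales the first moment of each balanced piece $\Zwni$ by essentially the cycle factor of \eqref{eq_CondRatio1stMom}. Concretely, \Prop~\ref{Prop_CondRatio1stMom} gives, on the event $\{C_{l,n}=c_l,\ 2\le l\le L\}$,
\begin{align*}
\Erw\brk{\Zwni(\gnm)\mid\cF_L}\sim\Erw\brk{\Zwni(\gnm)}\prod_{l=2}^L(1+\delta_l)^{C_{l,n}}\exp(-\lambda_l\delta_l),
\end{align*}
hence, summing over $s\in\Swn$ (a fixed finite set once $\w,\nu$ are fixed),
\begin{align*}
\Erw\brk{\Zw(\gnm)\mid\cF_L}\sim\Erw\brk{\Zw(\gnm)}\exp(U_L).
\end{align*}
Taking logarithms and using \Prop~\ref{Prop_first_mom}, which lets us choose $\w$ so that $\Erw[\Zw]/\Erw[Z_k]\to1$ to within any prescribed error, we get $\ln\Erw[\Zw\mid\cF_L]-\ln\Erw[Z_k]-U_L\to0$ in probability after first $n\to\infty$ then $L\to\infty$ then $\w\to\infty$. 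Finally I would upgrade from $\Zw$ to $Z_k$ on the level of conditional expectations: since $0\le Z_k-\Zw$ and $\Erw[Z_k-\Zw]=o(\Erw[Z_k])$ after sending $\w\to\infty$, Markov's inequality applied to the nonnegative random variable $\Erw[Z_k-\Zw\mid\cF_L]$ shows $\Erw[Z_k\mid\cF_L]=(1+o_{\pr}(1))\Erw[\Zw\mid\cF_L]$, and taking logs transfers the estimate, completing \eqref{eq_lem_concen_2_1}.

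The main obstacle is the interchange of the asymptotics in $s$, $n$, $L$ and $\w$: \Prop~\ref{Prop_CondRatio1stMom} is stated for fixed $\w,\nu$ and fixed $L$, with the $\sim$ hiding an $n$-dependent $(1+o(1))$ whose rate may depend on $\w,\nu,L$ and even on $s$. One must therefore fix $\w,\nu$ first (so $\abs{\Swn}$ is a finite constant and the finitely many $s$-wise estimates can be combined uniformly), then send $n\to\infty$, then $L\to\infty$, and only at the very end $\w\to\infty$; the summability $\sum_l\lambda_l\delta_l^2<\infty$ is what guarantees the tail of $U_L$ is uniformly small in this iterated limit so that the $L\to\infty$ step is legitimate. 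A secondary technical point is controlling the rare cycle counts: on the event that some $C_{l,n}$ is atypically large the factor $\prod(1+\delta_l)^{C_{l,n}}$ could in principle be large, but since $\abs{\delta_l}\le1/(k-1)\le1/2$ this factor is bounded by $\exp(\sum_l C_{l,n}\abs{\delta_l})$ and the $C_{l,n}$ concentrate, so truncating at a threshold $B=B(\eps)$ as in the proof of \Lem~\ref{Lem_concen_1} controls the contribution of the bad event.
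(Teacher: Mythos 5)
Your proof of the first assertion contains a genuine error. You bound $\Erw\brk{|U_L|}\le\sum_{l=2}^L|\ln(1+\delta_l)|\,\Erw\brk{C_{l,n}}+\sum_{l=2}^L\lambda_l|\delta_l|$ and then claim $\sum_l\lambda_l|\delta_l|\le(k-1)\sum_l\lambda_l\delta_l^2<\infty$. That inequality is backwards: since $|\delta_l|=(k-1)^{l-1}\delta_l^2$, one has $\lambda_l|\delta_l|=(k-1)^{l-1}\lambda_l\delta_l^2$, and in fact $\lambda_l|\delta_l|=\tfrac{k-1}{2l}\bigl(\tfrac{d}{k-1}\bigr)^l$, so $\sum_l\lambda_l|\delta_l|$ \emph{diverges} whenever $d\ge k-1$ — which is exactly the regime of interest (e.g.\ $d'$ up to $2(k-1)\ln(k-1)$ or $\dc\approx 2k\ln k$). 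The same is true of $\sum_l|\ln(1+\delta_l)|\Erw\brk{C_{l,n}}$, so no triangle-inequality split of $C_{l,n}\ln(1+\delta_l)$ from $\lambda_l\delta_l$ can give a bound uniform in $L$: the two terms are individually huge and the finiteness of $\Erw|U_L|$ rests on their cancellation. The paper exploits this by centring, i.e.\ bounding $|C_{l,n}\ln(1+\delta_l)-\lambda_l\delta_l|\le|\delta_l|\,|C_{l,n}-\lambda_l|+\delta_l^2C_{l,n}$ (using $x-x^2\le\ln(1+x)\le x$), whence $\Erw\brk{|U_L|}\le\sum_l\delta_l\sqrt{\lambda_l}+\delta_l^2\lambda_l$, and both series converge because $d<(k-1)^2$ under the theorem's hypotheses. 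Your argument, as written, does not establish the first assertion.

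For the second assertion your skeleton (conditional first-moment asymptotics for $\Zw$ from \Prop~\ref{Prop_CondRatio1stMom} summed over the finite set $\Swn$, truncation of the cycle counts at a level $B$, then a sandwich $\Erw\brk{\Zw|\cF_L}\le\Erw\brk{Z_k|\cF_L}\le\Erw\brk{\Zw|\cF_L}+\mbox{error}$ via Markov) is essentially the paper's proof of \eqref{eq_lem_concen_2_4}--\eqref{eq_lem_concen_2_5}. However, your prescribed order of limits — fix $\w$ first, send $n\to\infty$, then $L\to\infty$, and ``only at the very end $\w\to\infty$'' — does not work for the upper bound. The additive error coming from $Z_k-\Zw$ must be negligible compared with $\Erw\brk{\Zw|\cF_L}\approx\Erw\brk{\Zw}\exp(U_L)$, i.e.\ (on the atoms of $\cC_B$) compared with $\prod_{l\le L}\pr\brk{\Po(\lambda_l)=c_l}$, which shrinks as $L$ grows; hence $\alpha$, and with it $\w$, must be chosen \emph{after} $L$ and $B$, exactly as in the paper ($\alpha=\alpha(\eps,L,B)$, $\w=\w(\alpha)$). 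With $\w$ fixed independently of $L$ the error term does not vanish as $L\to\infty$ (the only way to rescue a fixed-$\w$ argument would be a uniform-in-$L$ tightness of $U_L$, i.e.\ precisely the first assertion, whose proof is the broken part). Note also that the statement to be proved contains no $\w$ at all, so there is no final limit $\w\to\infty$ to take; $\w$ is an internal parameter whose choice may, and must, depend on $\eps$, $L$ and $B$.
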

\begin{proof}
	In a first step we show that $\Erw\brk{|U_L|}$ is uniformly bounded. As $x-x^2\le\ln(1+x)\le x$ for $|x|\le 1/8$ we have for every $l\le L$: 
	\begin{align*}
	\Erw\brk{\left|C_{l,n}\ln(1+\delta_l)-\lambda_l\delta_l\right|}\le \delta_l\Erw\brk{\left|C_{l,n}-\lambda_l\right|}+\delta_l^2\Erw\brk{C_{l,n}}.
	\end{align*}
	Therefore, Fact~\ref{Fact_cyc} implies that
	\begin{align}\label{eq_lem_concen_2_2}
	\Erw\brk{\left|U_L\right|}\le\sum_{l=2}^L\delta_l\sqrt{\lambda_l}+\delta_l^2\lambda_l.
	\end{align}
	\Prop~\ref{Prop_CondRatio1stMom} ensures that $\sum_l \delta_l^2\lambda_l <\infty$. Furthermore, as $d^{\prime}\le (2k-1)\ln k$, we have 
	$$\sum_l \delta_l\sqrt{\lambda_l}\le \sum_l k^l 2^{-(k-1)l/2} <\infty$$
	and thus (\ref{eq_lem_concen_2_2}) shows that $\Erw\brk{|U_L|}$ is uniformly bounded. 
	
	To prove (\ref{eq_lem_concen_2_1}), for given $n$ and a constant $B>0$ we let $\cC_B$ be the event that $C_{l,n}<B$ for all $l\le L$. Referring to Fact~\ref{Fact_cyc}, we can find for each $L,\eps>0$ a $B>0$ such that
	\begin{align}\label{eq_lem_concen_2_3}
	\pr\brk{\cC_B}>1-\eps.
	\end{align}
	To simplify the notation we set $Z_k=Z_k(\gnm)$ and $\Zw=\Zw(\gnm)$. By \Prop~\ref{Prop_first_mom} we can choose for any $\alpha>0$ a $\w>0$ large enough such that $\Erw\brk{\Zw}>(1-\alpha)\Erw\brk{Z_k}$ for large enough $n$. Then \Prop s~\ref{Prop_first_mom} and \ref{Prop_CondRatio1stMom} combined with Fact~\ref{Fact_cyc} imply that for any $c_1,...,c_L\le B$ and small enough $\alpha=\alpha(\eps, L, B)$ we have for $n$ large enough:
	\begin{align}\label{eq_lem_concen_2_4}
	\nonumber \Erw\brk{Z_k|\forall 2\leq l\leq L:C_{l,n}=c_l}& \ge \Erw\brk{\Zw|\forall 2\leq l\leq L:C_{l,n}=c_l}\\
	& \ge \exp\brk{-\eps}\Erw\brk{Z_k} \prod_{l=2}^L (1+\delta_l)^{c_l}\exp\brk{-\delta_l\lambda_l}.
	\end{align} 
On the other hand, for $\alpha$ sufficiently small and large enough $n$ we have
	\begin{align}\label{eq_lem_concen_2_5}
	 \nonumber  \Erw\brk{Z_k|\forall 2\leq l\leq L:C_{l,n}=c_l}& = \Erw\brk{Z_k-\Zw|\forall 2\leq l\leq L:C_{l,n}=c_l} + \Erw\brk{\Zw|\forall 2\leq l\leq L:C_{l,n}=c_l}\\
	  \nonumber &\le \frac{2\alpha \Erw\brk{Z_k}}{\prod_{l=2}^L\pr\brk{\Po(\lambda_l)=c_l}}+\Erw\brk{\Zw|\forall 2\leq l\leq L:C_{l,n}=c_l}\\
		    & \le \exp\brk{\eps}\Erw\brk{Z_k} \prod_{l=2}^L (1+\delta_l)^{c_l}\exp\brk{-\delta_l\lambda_l}
	\end{align}
Thus, the proof of (\ref{eq_lem_concen_2_1}) is completed by combining (\ref{eq_lem_concen_2_3}), (\ref{eq_lem_concen_2_4}), (\ref{eq_lem_concen_2_5}) and taking logarithms. 

\end{proof}

\begin{proof}[Proof of \Thm~\ref{Thm_main}]
For $L\ge 2$, we define
\begin{align*}
 W_L=\sum_{l=2}^L X_l \ln(1+\delta_l)-\lambda_l\delta_l \quad \text{ and } \quad W'=\sum_{l\ge 2} X_l \ln(1+\delta_l)-\lambda_l\delta_l.
\end{align*}
Then Fact~\ref{Fact_cyc} implies that for each $L$ the random variables $U_L$ defined in (\ref{eq_lem_concen_2_0}) converge in distribution to $W_L$ as $n\to\infty$. Furthermore, because $\sum_{l}\delta_l \sqrt{\lambda_l}, \sum_{l}\delta_l^2\lambda_l<\infty$, the martingale convergence theorem implies that $W'$ is well-defined and that the $W_L$ converge to $W'$ almost surely as $L\to\infty$. Hence, from \Lem s~\ref{Lem_concen_2} and \ref{Lem_concen_3} it follows that $\ln Z_k(\gnm)-\ln\Erw\brk{Z_k(\gnm)}$ converges to $W'$ in distribution, meaning that for any $\eps>0$ we have
\begin{align}\label{eq_proof_main_dist}
\lim_{n\to\infty} \pr\brk{|\ln Z_k(\gnm)-\ln\Erw\brk{Z_k(\gnm)}-W'|>\eps}=0.
\end{align}
To derive \Thm~\ref{Thm_main} from~(\ref{eq_proof_main_dist}), we denote by $S$ the event that $\gnm$ consists of $m$ distinct edges, or, equivalently, that no cycles of length 2 exist in $\gnm$. Given that $S$ occurs, $\gnm$ is identical to $G(n,m)$ and $W'$ is identical to $W$.
Furthermore, Fact~\ref{Fact_doubleEdge} implies that $\pr\brk{S}=\Omega(1)$. Consequently, \eqref{eq_proof_main_dist} yields 
\begin{align}\label{eq_proof_main_1_dist}
\nonumber 0&=\lim_{n\to\infty} \pr\brk{|\ln Z_k(\gnm)-\ln\Erw\brk{Z_k(\gnm)}-W'|>\eps|S}\\
&=\lim_{n\to\infty} \pr\brk{|\ln Z_k(G(n,m))-\ln\Erw\brk{Z_k(\gnm)}-W|>\eps}.
\end{align}
As \Lem~\ref{Lem_first_mom_balanced} implies that
$\Erw\brk{Z_k(\gnm)},\Erw\brk{Z_k(G(n,m)}=\Theta\br{k^n\br{1-1/k}^m}$, we have $\Erw\brk{Z_k(\gnm)}=\Theta(\Erw\brk{Z_k(G(n,m)})$ and with (\ref{eq_proof_main_1_dist}) it follows that
$$\lim_{n\to\infty} \pr\brk{|\ln Z_k(G(n,m))-\ln\Erw\brk{Z_k(G(n,m)))}-W|>\eps}=0,$$
which proves \Thm~\ref{Thm_main}.
\end{proof}

\section{The first moment}\label{Sec_first_moment_Paper4}

\noindent
The aim in this section is to prove \Prop~\ref{Prop_first_mom}.
The calculations that have to be done follow the path beaten in~\cite{AchNaor,Danny,Kemkes, Rassmann} and are in fact very similar to \cite{aco_plantsil}. Thus, most of the proofs are deferred to the appendix. 
Furthermore, at the end of the section we state a result that we need for \Prop~\ref{Prop_ratio_sec_first}.

Let $\Zrho(G)$ be the number of $k$-colourings of the graph $G$ with colour density $\rho$. Let $\rho^\star$ be a $k$-dimensional vector with all entries set to $1/k$. We define
	\begin{equation*}
	f_1:\rho\in \cA_k\mapsto \cH(\rho) + \frac{d}{2} \ln \br{1 - \sum_{i=1}^k \rho_i^2 }.
	\end{equation*}
In order to determine the expectation of $\Zrho$, we have to analyse the function $f_1(\rho)$. The following lemma was already obtained in \cite{aco_plantsil} and its proof can be found in \Sec~\ref{Sec_appendix_first}.

\begin{lemma} \label{Lem_first_mom_balanced} 
Let $k \geq 3$ and $d^{\prime} \in (0, \infty)$. Then there exist numbers $C_1=C_1(k,d), C_2=C_2(k,d) > 0$ such that for any $\rho\in\cA_k(n)$ we have
\begin{equation}\label{eq_first_mom_balanced_1}	
C_1 n^{\frac{1-k}{2}} \exp \brk{n f_1(\rho)} \leq \Erw\brk{\Zrho(\gnm)} \leq C_2   \exp \brk{ n f_1(\rho)}.
\end{equation}
Moreover, if $\| \rho-\rho^\star\|_2 = o(1)$ and $d=2m/n$, then 
\begin{align}\label{eq_first_mom_balanced_2}
\Erw \brk{\Zrho(\gnm)} \sim  \br{{2 \pi n}}^{\frac{1-k}{2}} k^{k/2} \exp \brk{d/2+n f_1(\rho)}.
\end{align}
\end{lemma}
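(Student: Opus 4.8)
The final statement to prove is \Lem~\ref{Lem_first_mom_balanced}, the first-moment estimate for $\Zrho(\gnm)$.

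\medskip

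\textbf{Plan of proof.} The approach is a direct computation of $\Erw[\Zrho(\gnm)]$ by linearity of expectation, followed by Stirling-type asymptotics. First I would fix $\rho\in\cA_k(n)$ and write $n_i=\rho_i n$ for the prescribed colour-class sizes, so that $\sum_i n_i=n$. The number of maps $\sigma:[n]\to[k]$ with $\abs{\sigma^{-1}(i)}=n_i$ is the multinomial coefficient $\binom{n}{n_1,\dots,n_k}$. For each such $\sigma$, since the $m$ edges of $\gnm$ are drawn independently and uniformly from the $\binom n2$ potential edges (with replacement), the probability that a single random edge is properly coloured by $\sigma$ is exactly
\[
p(\rho)=\frac{\binom n2-\sum_{i=1}^k\binom{n_i}2}{\binom n2}=\frac{n^2-\sum_i n_i^2}{n^2-n}+O(1/n)=\frac{1-\sum_i\rho_i^2}{1-1/n}+O(1/n),
\]
and the probability that $\sigma$ properly colours all $m$ edges is $p(\rho)^m$. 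Hence
\[
\Erw[\Zrho(\gnm)]=\binom{n}{n_1,\dots,n_k}\,p(\rho)^m.
\]

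\medskip

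Next I would apply Stirling's formula $h!=\sqrt{2\pi h}\,(h/\eul)^h(1+O(1/h))$ to each factorial in the multinomial coefficient. Taking logarithms, $\ln\binom{n}{n_1,\dots,n_k}=n\cH(\rho)-\frac12\sum_{i=1}^k\ln(2\pi n\rho_i)+\frac12\ln(2\pi n)+O(1/n_{\min})$, where $n_{\min}=\min_i n_i$. For the edge factor, $m\ln p(\rho)=\frac d2\,n\ln(1-\sum_i\rho_i^2)+O(1)$ once we use $m=\lceil d'n/2\rceil$ and absorb the $O(1/n)$ corrections and the $d'$-versus-$d$ discrepancy into the constant (this is why the statement only claims $\Theta$ in \eqref{eq_first_mom_balanced_1}). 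Combining, $\ln\Erw[\Zrho(\gnm)]=nf_1(\rho)+\tfrac{1-k}2\ln n+\Theta(1)$, which gives the two-sided bound \eqref{eq_first_mom_balanced_1} with constants depending only on $k,d$. For the sharper asymptotic \eqref{eq_first_mom_balanced_2}, I would use that when $\norm{\rho-\rho^\star}_2=o(1)$ each $\rho_i\to 1/k$, so $\prod_i\rho_i\to k^{-k}$ and $\sum_i\ln(2\pi n\rho_i)=k\ln(2\pi n/k)+o(1)$; tracking all the $\sqrt{2\pi}$ and $k^{k/2}$ factors carefully and using $p(\rho)^m\sim(1-\sum_i\rho_i^2)^{d n/2}\eul^{d/2}$ (the $\eul^{d/2}$ coming from $(1-1/n)^{-m}$) yields the stated constant $(2\pi n)^{(1-k)/2}k^{k/2}\eul^{d/2}$.

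\medskip

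\textbf{Main obstacle.} The only genuinely delicate points are bookkeeping ones: first, ensuring the error terms in Stirling are uniform over all $\rho\in\cA_k(n)$ — this requires that $n_{\min}=\rho_{\min}n$ not be too small, but for $\rho$ far from the barycentre some $n_i$ could in principle be tiny or zero; one handles the boundary case $n_i=0$ separately (the colour $i$ is simply unused, the multinomial degenerates, and the bound still holds a fortiori), and otherwise $n_i\ge 1$ suffices for the crude $\Theta$-bound \eqref{eq_first_mom_balanced_1} since constants may depend on $k$. Second, the passage from $d'$ (fixed) to $d=2m/n=2\lceil d'n/2\rceil/n$ introduces a multiplicative factor $(1-\sum\rho_i^2)^{m-d'n/2}$ with bounded exponent, hence bounded, which again is absorbed into $C_1,C_2$ but must be kept out of the refined formula \eqref{eq_first_mom_balanced_2}, where $d$ is used throughout rather than $d'$. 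Since this lemma was already established in \cite{aco_plantsil}, I would present the computation concisely and refer to \Sec~\ref{Sec_appendix_first} for the remaining routine details.
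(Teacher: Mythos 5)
Your proposal is correct and follows essentially the same route as the paper: the exact formula $\Erw[\Zrho(\gnm)]=\binom{n}{\rho_1 n,\dots,\rho_k n}\bigl(1-\tfrac1N\sum_i\binom{\rho_i n}{2}\bigr)^m$, the identity $1-\tfrac1N\sum_i\binom{\rho_in}{2}=(1-\sum_i\rho_i^2)/(1-1/n)$ producing the $\eul^{d/2}$ factor, and Stirling asymptotics for the multinomial, with $d=2m/n$ so that $m\ln(1-\sum_i\rho_i^2)=n\tfrac d2\ln(1-\sum_i\rho_i^2)$ exactly. Your side remark that a $d'$-versus-$d$ correction factor is ``bounded'' is unnecessary (and not uniformly true near the boundary of $\cA_k(n)$), but since you, like the paper, state both estimates with $d$ rather than $d'$, no such factor actually enters and the argument stands.
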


We can now state the expectation of $Z_k$. The proof will be carried out in detail in \Sec~\ref{Sec_appendix_first}.

\begin{corollary} \label{Cor_first_mom_total}
For any $k \geq 3, d^{\prime} \in (0, \infty)$ and $d=2m/n$, we have
$$ \Erw\brk{Z_k(\gnm)}\sim \exp \brk{d/2+ n f_1\br{\rho^\star}} \br{1 + \frac{d}{k-1}}^{-\frac{k-1}{2}}.$$
\end{corollary}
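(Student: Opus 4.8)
\textbf{Proof proposal for Corollary~\ref{Cor_first_mom_total}.}
The plan is to derive the asymptotic formula for $\Erw\brk{Z_k(\gnm)}$ by summing the per-density estimate from Lemma~\ref{Lem_first_mom_balanced} over all colour densities $\rho\in\cA_k(n)$, and showing that the sum is dominated by a small neighbourhood of the barycentre $\rho^\star$, where the Gaussian approximation \eqref{eq_first_mom_balanced_2} applies. First I would write $\Erw\brk{Z_k(\gnm)}=\sum_{\rho\in\cA_k(n)}\Erw\brk{\Zrho(\gnm)}$. The function $f_1$ has a strict global maximum at $\rho^\star$ on the simplex $\cA_k$: indeed $\cH$ is maximised there and $\ln(1-\sum_i\rho_i^2)$ is also maximised there (the quantity $\sum_i\rho_i^2$ is minimised at the barycentre), so both terms peak simultaneously at $\rho^\star$, with $f_1(\rho^\star)=\ln k+\tfrac d2\ln(1-1/k)$. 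A second-order Taylor expansion of $f_1$ around $\rho^\star$ on the affine hull of the simplex (parametrised by $k-1$ free coordinates) shows that $n(f_1(\rho)-f_1(\rho^\star))$ behaves like a negative-definite quadratic form; the relevant scale is therefore $\|\rho-\rho^\star\|_2=O(\sqrt{\log n/n})$.

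Next I would split the sum into the ``central'' range $\|\rho-\rho^\star\|_2\le n^{-1/2}\log n$ and the ``tail'' range. On the tail, the crude upper bound $\Erw\brk{\Zrho(\gnm)}\le C_2\exp[nf_1(\rho)]$ from \eqref{eq_first_mom_balanced_1}, together with the strict concavity of $f_1$ and the polynomial bound $|\cA_k(n)|\le(n+1)^{k}$ on the number of densities, gives a contribution that is $\exp[nf_1(\rho^\star)]\cdot n^{O(1)}\cdot\exp[-\Omega(\log^2 n)]=o\br{\exp[nf_1(\rho^\star)]\cdot n^{-(k-1)/2}}$, hence negligible against the central term. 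On the central range, \eqref{eq_first_mom_balanced_2} applies uniformly, so
\begin{align*}
\sum_{\|\rho-\rho^\star\|_2\le n^{-1/2}\log n}\Erw\brk{\Zrho(\gnm)}\sim (2\pi n)^{\frac{1-k}{2}}k^{k/2}\eul^{d/2}\eul^{nf_1(\rho^\star)}\sum_{\rho}\exp\brk{n(f_1(\rho)-f_1(\rho^\star))}.
\end{align*}
The densities $\rho\in\cA_k(n)$ in the affine hull form a translate of a sublattice of $\tfrac1n\ZZ^{k-1}$ with co-volume $c_k/n^{k-1}$ for an explicit constant $c_k$, so the sum over $\rho$ is a Riemann sum: $\sum_\rho\exp[-\tfrac12 x^\top Q x]$ with $x=\sqrt n(\rho-\rho^\star)$ ranging over lattice points of spacing $\sim n^{-1/2}$ converges, after multiplying by the cell volume $\sim c_k n^{-(k-1)/2}$, to $c_k^{-1}n^{(k-1)/2}(2\pi)^{(k-1)/2}(\det Q)^{-1/2}$.

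The remaining task is the bookkeeping of constants: one computes the Hessian $Q$ of $-f_1$ at $\rho^\star$ in suitable coordinates, evaluates $\det Q$, and multiplies together all the prefactors so that the powers of $n$ and $2\pi$ and the factor $k^{k/2}$ cancel correctly, leaving exactly $\exp[d/2+nf_1(\rho^\star)](1+d/(k-1))^{-(k-1)/2}$. The Hessian of $\cH$ at $\rho^\star$ (restricted to the tangent space $\{\sum_i v_i=0\}$) is $k\,\mathrm{Id}$, while the Hessian of $\tfrac d2\ln(1-\sum_i\rho_i^2)$ contributes $\tfrac{d}{1-1/k}\,\mathrm{Id}=\tfrac{dk}{k-1}\,\mathrm{Id}$ on that subspace; together $Q=\br{k+\tfrac{dk}{k-1}}\mathrm{Id}=\tfrac{k}{k-1}(k-1+d)\,\mathrm{Id}$ on a $(k-1)$-dimensional space, so $\det Q=\br{\tfrac{k}{k-1}(k-1+d)}^{k-1}$, and the factor $(\det Q)^{-1/2}$ combined with the lattice co-volume constant $c_k$ and $k^{k/2}$ produces the stated $(1+d/(k-1))^{-(k-1)/2}$. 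I expect the main obstacle to be precisely this constant-chasing step — in particular correctly identifying the lattice $\cA_k(n)$ lives in (it is the intersection of $\tfrac1n\ZZ^k$ with the hyperplane $\sum\rho_i=1$, whose induced $(k-1)$-dimensional co-volume carries a factor of $\sqrt k$) and making sure the Jacobian factors from changing to the $x=\sqrt n(\rho-\rho^\star)$ variables and from the Gaussian integral over the $(k-1)$-dimensional tangent space line up exactly with the $n^{(1-k)/2}$ and $k^{k/2}$ already present in \eqref{eq_first_mom_balanced_2}. Since Lemma~\ref{Lem_first_mom_balanced} is quoted from \cite{aco_plantsil}, this argument is essentially a Laplace-method computation and the proof is deferred to Section~\ref{Sec_appendix_first}.
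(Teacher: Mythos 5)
Your proposal is correct and follows essentially the same route as the paper's proof: split the sum over $\cA_k(n)$ at a mesoscopic distance from $\rho^\star$, discard the tail via the crude bound \eqref{eq_first_mom_balanced_1} together with concavity of $f_1$, and evaluate the central part by a Gaussian lattice summation against the sharp estimate \eqref{eq_first_mom_balanced_2}. The only cosmetic difference is the bookkeeping for the constraint $\sum_i\rho_i=1$: the paper eliminates $\rho_k$ and sums over $\tfrac1n\mathbb{Z}^{k-1}$ with the matrix $J$ ($\det J=k$), whereas you work intrinsically on the tangent hyperplane with Hessian $k\br{1+\tfrac{d}{k-1}}\mathrm{Id}$ and lattice covolume $\sqrt k$ — both yield the same constant $\br{1+\tfrac{d}{k-1}}^{-\frac{k-1}{2}}$.
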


\begin{proof}[Proof of \Prop~\ref{Prop_first_mom}]
The first assertion is immediate from \Cor~\ref{Cor_first_mom_total}.
Moreover, the second assertion follows from \Cor~\ref{Cor_first_mom_total} and the second part of \Lem~\ref{Lem_first_mom_balanced}.
\end{proof}

\noindent Finally, as our approach requires the analysis of the random variables $\Zwni(\gnm)$, we derive an expression for $\Erw\brk{\Zwni(\gnm)}$ that we will need to prove \Prop~\ref{Prop_ratio_sec_first}.

\begin{lemma}\label{Lem_fm_shift}
 Let $k \ge 3, \w,\nu \in N, d^{\prime} \in (0,\infty)$ and $d=2m/n$. For $s \in \Swn$ and $\rwnik$ as defined in (\ref{rwnik}), we have
\begin{align*}
 \Erw\brk{\Zwni(\gnm)}\sim_{\nu} |\Awni|\br{{2 \pi n}}^{\frac{1-k}{2}} k^{k/2} \exp \brk{d/2+n f_1(\rwnik)}.
\end{align*}
\end{lemma}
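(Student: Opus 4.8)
The plan is to reduce the statement to the per-density asymptotics already established in \Lem~\ref{Lem_first_mom_balanced}, plus a uniformity argument over the small set $\Awni$. First I would write $\Zwni(\gnm)=\sum_{\rho\in\Awni}\Zrho(\gnm)$ by definition of $\Awni$, so that by linearity of expectation
\begin{align*}
\Erw\brk{\Zwni(\gnm)}=\sum_{\rho\in\Awni}\Erw\brk{\Zrho(\gnm)}.
\end{align*}
Every $\rho\in\Awni\subseteq\Aw$ satisfies $\norm{\rho-\rho^\star}_2=O(\w/\sqrt n)=o(1)$, so the precise asymptotic \eqref{eq_first_mom_balanced_2} of \Lem~\ref{Lem_first_mom_balanced} applies to each term, giving
\begin{align*}
\Erw\brk{\Zrho(\gnm)}\sim (2\pi n)^{\frac{1-k}2}k^{k/2}\exp\brk{d/2+nf_1(\rho)}
\end{align*}
for each fixed $\rho$. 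The key point is that this asymptotic equivalence is \emph{uniform} over $\rho\in\Awni$; this can be extracted from the proof of \Lem~\ref{Lem_first_mom_balanced} in \Sec~\ref{Sec_appendix_first}, since the error terms there depend only on $\norm{\rho-\rho^\star}_2$, which is uniformly $o(1)$ on $\Aw$ for fixed $\w$.

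Second, I would control the variation of $f_1$ across the ball $\Awni$. For $\rho\in\Awni$ we have $\norm{\rho-\rwnik}_\infty\le 1/(\nu\sqrt n)$, hence $\norm{\rho-\rwnik}_2\le \sqrt k/(\nu\sqrt n)$. Since $f_1$ is smooth near $\rho^\star$ with $\nabla f_1(\rho^\star)=0$ (the barycentre is the critical point, as used throughout the first-moment analysis) and bounded Hessian on a neighbourhood of $\rho^\star$, a second-order Taylor expansion about $\rwnik$ gives
\begin{align*}
n f_1(\rho)=n f_1(\rwnik)+n\,O\!\br{\norm{\rho-\rwnik}_2\cdot\norm{\rwnik-\rho^\star}_2}+n\,O\!\br{\norm{\rho-\rwnik}_2^2}.
\end{align*}
Here $\norm{\rwnik-\rho^\star}_2=O(\w/\sqrt n)$ and $\norm{\rho-\rwnik}_2=O(1/(\nu\sqrt n))$, so the first error is $O(\w/\nu)$ and the second is $O(1/\nu^2)$. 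Thus $\exp\brk{nf_1(\rho)}=\exp\brk{nf_1(\rwnik)}\cdot e^{O(\w/\nu)}$ uniformly over $\rho\in\Awni$, and taking $\nu\to\infty$ makes this factor $1+o_\nu(1)$. This is exactly the place the $\sim_\nu$ notation is needed: we first let $n\to\infty$ (to apply the uniform asymptotic from \Lem~\ref{Lem_first_mom_balanced} and to make the sum over $\Awni$ comparable to $|\Awni|$ times the central term) and then $\nu\to\infty$ (to kill the $O(\w/\nu)$ drift in $f_1$).

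Combining, $\Erw\brk{\Zwni(\gnm)}=\sum_{\rho\in\Awni}\Erw\brk{\Zrho(\gnm)}$ is, up to a $(1+o(1))(1+o_\nu(1))$ factor, equal to $|\Awni|$ times $(2\pi n)^{\frac{1-k}2}k^{k/2}\exp\brk{d/2+nf_1(\rwnik)}$, which is the claimed expression. The main obstacle I anticipate is establishing the uniformity of the asymptotic \eqref{eq_first_mom_balanced_2} over $\rho\in\Awni$ — that is, that the implied $(1+o(1))$ is uniform — rather than merely pointwise in $\rho$; this requires inspecting the Stirling/local-central-limit estimates in the appendix proof of \Lem~\ref{Lem_first_mom_balanced} and checking their error bounds depend on $\rho$ only through $\norm{\rho-\rho^\star}_2$. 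The Taylor expansion of $f_1$ and the bookkeeping of the two limits $n\to\infty$, $\nu\to\infty$ are routine once that uniformity is in hand.
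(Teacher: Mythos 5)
Your proposal is correct and follows essentially the same route as the paper: decompose $\Erw\brk{\Zwni}$ as the sum over $\rho\in\Awni$ of $\Erw\brk{\Zrho}$, apply the precise asymptotic \eqref{eq_first_mom_balanced_2} of \Lem~\ref{Lem_first_mom_balanced}, and use a Taylor expansion of $f_1$ about $\rwnik$ (with gradient of size $O(\w/\sqrt n)$ since $\nabla f_1(\rho^\star)=0$) to show $nf_1(\rho)=nf_1(\rwnik)+O(\w/\nu)$ uniformly on $\Awni$, the error being absorbed by the $\sim_\nu$ limit. The paper's proof is exactly this, stated more tersely; your additional remark on the uniformity of \eqref{eq_first_mom_balanced_2} over $\Awni$ is a valid point that the paper leaves implicit.
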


\begin{proof}
Using a Taylor expansion of $f_1(\rho)$ around $\rho=\rwnik$, we get
 \begin{align}\label{eq_fm_shift}
 f_1(\rho)=f_1(\rwnik)+\Theta\br{\frac \w{\sqrt n}}\|\rho-\rwnik\|_1+\Theta\br{\|\rho-\rwnik\|_2^2}.
 \end{align}
As $\|\rho-\rwnik\|_1=O\br{\frac{1}{\nu \sqrt n}}$ for $\rho\in\Awni$ and $\|\rho-\rwnik\|_2^2=O\br{\frac{1}{\nu^2 n}}$, we conclude that  $f_1(\rho)=f_1(\rwnik)+O\br{\frac \w{\nu n}}$ and as this is independent of $\rho$, the assertion follows by inserting (\ref{eq_fm_shift}) in (\ref{eq_first_mom_balanced_2}) and multiplying by $|\Awni|$.

\end{proof}

\section{The second moment}\label{Sec_second_moment_Paper4}

The aim of this section is to prove \Prop~\ref{Prop_ratio_sec_first}, which constitutes the main technical contribution of this work 
and \Prop~\ref{Prop_first_mom_tame_bal}, which is done in the last subsection and is based on and an enhancement of results derived in \cite{AchNaor}.
The crucial points in our analysis are that, similar to \cite{aco_plantsil, Rassmann}, we need an asymptotically tight expression for the second moment and instead of confining ourselves to the case of colourings whose colour densities are $(O(1),n)$-balanced, as done in most of prior work ~\cite{AchNaor,Cond,Danny, Kemkes}, we need to deal with $(\omega,n)$-balanced colour densities for a diverging function $\omega=\omega(n)\ra\infty$. However, our work has to extend the calculations from \cite{aco_plantsil} following the example of \cite{Rassmann}, because we aim for a statement about the whole distribution of $\ln Z_k(G(n,m))$. Our line of argument follows that of \cite{Rassmann}, where analogue statements are proven for the problem of hypergraph 2-colouring. 

\subsection{Classifying the overlap}
To standardise the notation, we define the \textit{overlap matrix} $\rho(\sigma, \tau)=(\rho_{ij}(\sigma, \tau))_{i,j\in[k]}$  for two colour assignments $\sigma, \tau : [n] \to [k]$ as the doubly stochastic $k\times k$-matrix with entries
	$$ \rho_{ij}(\sigma, \tau) = \frac{1}{n} \cdot | \sigma^{-1}(i) \cap \tau^{-1}(j) |. $$
We let $\cB_k(n)$ denote the set of all overlap matrices and $\cB_k$ denote the set of all probability measures $\rho=(\rho_{ij})_{i,j\in[k]}$ on $[k]\times[k]$. Moreover, we let $\bar\rho$ signify the $k\times k$-matrix with all entries equal to $k^{-2}$, the barycentre of $\cB_k$. For a $k\times k$-matrix $\rho=(\rho_{ij})$, we introduce the shorthands
	$$ \rho_{i \star} = \sum_{j=1}^k \rho_{ij}, \qquad\rho_{\nix\star}=(\rho_{i \star})_{i\in[k]},
	 \qquad \qquad \rho_{\star j} = \sum_{i=1}^k \rho_{ij},\qquad
	 	\rho_{\star\nix}=(\rho_{\star i  })_{i\in[k]}. $$
With the notation from \Sec~\ref{Sec_outline_Paper4}, we observe that for any $\sigma, \tau : [n] \to [k]$ we have
 $\rho_{\nix\star},\rho_{\star\nix}\in\cA_{k}(n)$. 
We introduce the set
\begin{align*}
	\Bw&=\cbc{\rho\in\cB_k(n):
		\rho_{i \star},  \rho_{\star i} \in \left[\frac 1k -\frac{w}{\sqrt n},\frac 1k+ \frac{w}{\sqrt n}\right)\mbox{ for all $i\in [k]$}},
\end{align*}
which corresponds to $\Aw$ insofar as for $\rho \in \Bw$ we have $\rho_{i \star}, \rho_{\star i} \in \Aw$ for all $i \in [k]$. We remember $\Swn$ from \eqref{eq_Swn}. Then for $s\in \Swn$ we define
\begin{align*}
	\Bwni&=\cbc{\rho\in\Bw:
		\rho_{i \star},  \rho_{\star i} \in \left[\rwnik_i-\frac{1}{\nu \sqrt n},\rwnik_i+ \frac{1}{\nu \sqrt n}\right)\mbox{ for all $i\in [k]$}}.
\end{align*}
Thus, for any fixed $\nu$, $\Bw$ is a disjoint union of all $\Bwni$ for $s \in \Swn$.
For a given graph $G$ on $[n]$, we let $\Zrho^{(2)}(G)$ be the number of pairs $(\sigma,\tau)$ of $k$-colourings  of $G$ whose overlap 
is $\rho$.
By the linearity of expectation,
\begin{align}\label{eq_smm_linear}
\Erw \brk{\Zwni(\gnm)^2}&=\sum_{\rho\in\Bwni}\Erw\brk{\Zrho^{(2)}(\gnm)}.
\end{align}

To proceed calculating this quantity, we first need the following elementary estimates whose proofs can be found in \Sec~\ref{Sec_appendix_second}.

\begin{fact}\label{Fact_varZ}
For any $k \geq 3$, $d^{\prime} \in (0, \infty)$ and $d=2m/n$, the following estimates are true.
\begin{enumerate}
\item Let  $\rho \in \cB_k(n)$. Then
	\begin{align} \label{eq_Zrho}
	\Erw \brk{\Zrho^{(2)}(\cG(n,m))}
	\sim \frac{\sqrt{2 \pi} n^{\frac{1-k^2}{2}}} {\prod_{i,j=1}^k 			\sqrt{2 \pi \rho_{ij}}} \exp\brk{d/2+ n \cH(\rho) +m \ln( 1 -\normA^2-\normB^2+\normC^2)}.
	\end{align} 
\item For any $\rho \in \cB_k(n)$ with $\|\rho-\bar \rho\|_2^2=o(1)$, we have
	\begin{align} \label{eq_Z_om} 
	\Erw\brk{\Zrho^{(2)}(\cG(n,m))}
	\sim k^{k^2} \br{2 \pi n}^{\frac{1-k^2}{2}}\exp\brk{d/2 + n \cH(\rho) +m \ln( 1 -\normA^2-\normB^2+\normC^2)}.
	\end{align}
\end{enumerate}
\end{fact}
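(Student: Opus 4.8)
The plan is a direct first-moment computation, exactly along the lines of the classical second moment method for random graph colouring. Fix $\rho\in\cB_k(n)$, so that the numbers $n\rho_{ij}$ are non-negative integers summing to $n$. First I would count the ordered pairs $(\sigma,\tau)$ of maps $[n]\to[k]$ with overlap matrix $\rho$: such a pair is the same thing as an ordered partition of $[n]$ into classes $\sigma^{-1}(i)\cap\tau^{-1}(j)$ of prescribed sizes $n\rho_{ij}$, so there are $n!/\prod_{i,j}(n\rho_{ij})!$ of them. Next, for one fixed such pair and a single edge $\vec e$ drawn uniformly from the $\binom n2$ edges of $K_n$, I would compute the probability that $\vec e$ is bichromatic under both $\sigma$ and $\tau$. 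By inclusion--exclusion on the events ``$\vec e$ monochromatic under $\sigma$'' and ``$\vec e$ monochromatic under $\tau$'', the number of good edges equals $\binom n2-\sum_i\binom{n\rho_{i\star}}2-\sum_j\binom{n\rho_{\star j}}2+\sum_{i,j}\binom{n\rho_{ij}}2$, which simplifies, using $\sum_i\rho_{i\star}=\sum_j\rho_{\star j}=\sum_{i,j}\rho_{ij}=1$, to $\tfrac{n^2}2(1-\normA^2-\normB^2+\normC^2)$. Dividing by $\binom n2=\tfrac{n^2-n}2$ gives the single-edge success probability $(1-\normA^2-\normB^2+\normC^2)/(1-1/n)$. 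Since in the model $\gnm$ the $m$ edges are drawn independently, the probability that the fixed pair is a pair of proper colourings is the $m$-th power of this, and by symmetry (the probability depends on the pair only through $\rho$) $\Erw[\Zrho^{(2)}(\gnm)]$ is the product of the multinomial coefficient and this probability.

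It then remains to expand both factors asymptotically. For the multinomial coefficient I would apply Stirling's formula $h!\sim\sqrt{2\pi h}\,(h/\mathrm e)^h$: the powers of $\mathrm e$ cancel because the class sizes sum to $n$, the ratio $n^n/\prod_{i,j}(n\rho_{ij})^{n\rho_{ij}}$ equals $\exp(n\cH(\rho))$, and the square-root factors combine to $\sqrt{2\pi}\,n^{(1-k^2)/2}/\prod_{i,j}\sqrt{2\pi\rho_{ij}}$. For the edge-probability factor, $(1-1/n)^{-m}=\exp(m/n+O(m/n^2))=\exp(d/2)(1+o(1))$ because $m=dn/2$, and this is precisely the source of the $\exp(d/2)$ prefactor; pulling the remaining $\ln(1-\normA^2-\normB^2+\normC^2)$ into the exponent produces the factor $\exp(m\ln(1-\normA^2-\normB^2+\normC^2))$ and hence claim (1). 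For claim (2), under $\|\rho-\bar\rho\|_2^2=o(1)$ one has $\rho_{ij}=k^{-2}(1+o(1))$ uniformly over $i,j$ (here $k$ is fixed), so $\prod_{i,j}\sqrt{2\pi\rho_{ij}}=(2\pi)^{k^2/2}k^{-k^2}(1+o(1))$; substituting this into the prefactor of (1) turns $\sqrt{2\pi}\,n^{(1-k^2)/2}/\prod_{i,j}\sqrt{2\pi\rho_{ij}}$ into $k^{k^2}(2\pi n)^{(1-k^2)/2}(1+o(1))$, which is (2).

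There is no genuine obstacle here --- the estimate is entirely elementary --- but two points need a little care. First, the Stirling approximation of $(n\rho_{ij})!$ carries a multiplicative error $1+O(1/(n\rho_{ij}))$, so the asymptotics as stated presuppose that every entry $\rho_{ij}$ stays bounded away from $0$ (equivalently $n\rho_{ij}\to\infty$); this is automatic in all later applications of the Fact, where $\rho$ lies in a neighbourhood of $\bar\rho$, and overlap matrices with tiny or vanishing entries are handled separately by a crude upper bound. Second, one must keep the $(1-1/n)^{-m}$ correction rather than simply replacing $\binom n2$ by $n^2/2$, since it is exactly this term that yields the $\exp(d/2)$ in both formulas; everything else goes through verbatim as in the first-moment computations of \cite{AchNaor,Danny}.
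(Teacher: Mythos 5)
Your proposal is correct and follows essentially the same route as the paper: count the pairs with a given overlap by the multinomial coefficient, compute the probability that all $m$ independently drawn edges avoid the forbidden (monochromatic) pairs via inclusion--exclusion, and expand with Stirling's formula; your derivation of the $\exp(d/2)$ prefactor through the exact cancellation and the factor $(1-1/n)^{-m}$ is just a rearrangement of the paper's expansion of $m\ln\left(1-\cF(\sigma,\tau)/N\right)$. The caveat you flag about Stirling requiring $n\rho_{ij}\to\infty$ is likewise implicit in the paper's own proof, so nothing further is needed.
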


To simplify the notation, we introduce the function $f_2: \cB_k \to \mathbb R$ defined as
\begin{align}\label{eq_def_f_2}
f_2(\rho)=\cH(\rho) +\frac d2 \ln( 1 -\normA^2-\normB^2+\normC^2).
\end{align}

A direct consequence of Fact \ref{Fact_varZ} that will be used in the sequel is that for every $\rho\in\cB_k(n)$ we have
\begin{align}\label{eq_erw_Z_rho}
\Erw\brk{\Zrho^{(2)}(\gnm)}=\exp\brk{nf_2(\rho) +O(\ln n)}.
\end{align}

\subsection{Dividing up the hypercube}

To proceed, we refine equation \eqref{eq_smm_linear}. For each $\w, \nu \in  N, s \in \Swn$ and $\eta>0$, we introduce
\begin{align*}
	\Bwnie&=\cbc{\rho\in\Bwni:\norm{\rho-\bar\rho}_2\leq\eta}.
\end{align*}

We are going to show that the r.h.s.\ of~(\ref{eq_smm_linear}) is dominated by the contributions with $\rho$ ``close to'' $\bar\rho$ in terms of the euclidean norm.
More precisely, for a graph $G$ let
	$$\Zwnie(G)=\sum_{\rho\in\Bwnie}\Zrho^{(2)}(G)\qquad
		\mbox{for any }\eta>0.$$
Then the second moment argument performed in~\cite{AchNaor} fairly directly yields the following statement showing that overlap matrices that are far apart from $\bar \rho$ do asymptotically not contribute to the second moment.

\begin{proposition}\label{Prop_AchNaor}
Assume that $k \geq 3$ and $d^{\prime} < 2(k-1) \ln (k-1)$. Further, let $\w,\nu \in N$. Then for any fixed $\eta >0$ and any $s\in \Swn$, it holds that $$\Erw\brk{\Zwni(\gnm)^2} \sim \Erw\brk{\Zwnie(\gnm)}.$$
\end{proposition}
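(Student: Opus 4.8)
\textbf{Proof plan for Proposition \ref{Prop_AchNaor}.}
The plan is to reduce the claim to the classical second moment estimate of Achlioptas and Naor \cite{AchNaor}, which shows that for $d^{\prime}<2(k-1)\ln(k-1)$ the function $f_2$ on $\cB_k$ attains its maximum over the relevant domain uniquely at the barycentre $\bar\rho$, with a strict separation between $f_2(\bar\rho)$ and $\sup\{f_2(\rho):\|\rho-\bar\rho\|_2>\eta\}$. Concretely, I would first recall from \cite{AchNaor} (in the form used in \cite{Cond,Danny}) that there is a constant $c=c(k,d^{\prime},\eta)>0$ such that $f_2(\rho)\le f_2(\bar\rho)-c$ for every doubly stochastic $\rho\in\cB_k$ with $\|\rho-\bar\rho\|_2>\eta$ and $\rho_{\nix\star},\rho_{\star\nix}$ close to $\rho^\star$; here one uses that $f_2(\bar\rho)=2f_1(\rho^\star)$, i.e. $f_2$ is maximised exactly where the ``independent'' overlap $\rho=\rho_{\nix\star}\otimes\rho_{\star\nix}$ sits and the marginals are balanced. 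Since $\Bwni\setminus\Bwnie$ consists of matrices with $\|\rho-\bar\rho\|_2>\eta$ whose marginals lie in the $(\w,n)$-balanced window (hence within $O(\w/\sqrt n)=o(1)$ of $\rho^\star$), this separation applies uniformly to all such $\rho$ once $n$ is large.

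Next I would turn the pointwise bound on $f_2$ into a bound on the expected contribution. By \eqref{eq_smm_linear} and \eqref{eq_erw_Z_rho},
\begin{align*}
\Erw\brk{\Zwni(\gnm)^2}-\Erw\brk{\Zwnie(\gnm)}
=\sum_{\rho\in\Bwni\setminus\Bwnie}\Erw\brk{\Zrho^{(2)}(\gnm)}
=\sum_{\rho\in\Bwni\setminus\Bwnie}\exp\brk{nf_2(\rho)+O(\ln n)}.
\end{align*}
Using $f_2(\rho)\le f_2(\bar\rho)-c$ on this range and the fact that $\cB_k(n)$ has at most $n^{k^2}$ elements, the right-hand side is at most $n^{k^2}\exp\brk{nf_2(\bar\rho)-cn+O(\ln n)}=\exp\brk{nf_2(\bar\rho)-cn+O(\ln n)}$. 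On the other hand $\bar\rho\in\Bwnie$ for every $\w,\nu,\eta$ (its marginals are exactly $\rho^\star$), so by \eqref{eq_Z_om}, $\Erw\brk{\Zwnie(\gnm)}\ge\Erw\brk{\Zrhobar^{(2)}(\gnm)}=\exp\brk{nf_2(\bar\rho)+O(\ln n)}$, where I write $\Zrhobar^{(2)}$ for the count at overlap $\bar\rho$. Hence the difference above is a factor $\exp\brk{-cn+O(\ln n)}=o(1)$ times $\Erw\brk{\Zwnie(\gnm)}$, which is precisely the asserted asymptotic equivalence.

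The only point requiring a little care, and the main obstacle, is the uniformity of the constant $c$ over the off-barycentre region as the marginals are allowed to drift by $O(\w/\sqrt n)$ rather than being pinned exactly at $\rho^\star$: one must check that the Achlioptas--Naor bound is stable under this $o(1)$ perturbation of the marginals, which follows from continuity of $f_2$ and compactness of $\cB_k$ (the set of doubly-substochastic matrices with prescribed near-balanced marginals and $\|\rho-\bar\rho\|_2\ge\eta$ is compact, and $f_2$ is continuous there, so the strict inequality $f_2<f_2(\bar\rho)$ forces a uniform gap). Since $\w$ and $\nu$ are fixed throughout this proposition and only $n\to\infty$ is taken, no interplay with the later limits $\nu\to\infty$, $\w\to\infty$ is needed here, and the argument is genuinely as short as in \cite{AchNaor}.
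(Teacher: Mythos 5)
Your overall route is the same as the paper's: reduce to the Achlioptas--Naor analysis of the exponential rate function, extract a uniform gap $c>0$ for overlaps with $\norm{\rho-\bar\rho}_2>\eta$ whose marginals are within $O(\w/\sqrt n)$ of $\rho^\star$, kill the off-ball contribution by the polynomial bound $|\cB_k(n)|\le n^{k^2}$ together with \eqref{eq_erw_Z_rho}, and lower-bound $\Erw\brk{\Zwnie(\gnm)}$ by a single term of order $\exp\brk{nf_2(\bar\rho)+O(\ln n)}$. The paper implements the uniformity step slightly differently (via \Lem~\ref{Lemma_f2bar}, which controls $f_2-\bar f_2$ quantitatively on $\Bw$, plus the quantitative bound of \Lem~\ref{Lemma_AchNaor_Main} and strict convexity of $\rho\mapsto k^2\norm\rho_2^2$), whereas you invoke continuity and compactness; at the crude $\eul^{-cn}$ level needed here both are fine, and your closing paragraph correctly identifies and handles this point.

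There is, however, one concrete misstep: the claim that $\bar\rho\in\Bwnie$ ``for every $\w,\nu,\eta$''. By definition, every $\rho\in\Bwni$ has marginals $\rho_{i\star},\rho_{\star i}\in\left[\rwnik_i-\tfrac1{\nu\sqrt n},\rwnik_i+\tfrac1{\nu\sqrt n}\right)$ with $\rwnik_i=\tfrac1k+\tfrac{s_i}{\nu\sqrt n}$, and the proposition is asserted for \emph{every} $s\in\Swn$, where $\norm s_1\ge 2$; so for $s\neq 0$ the barycentre $\bar\rho$ (whose marginals are exactly $\rho^\star$) does not lie in $\Bwnie$, and in general $\bar\rho$ need not even belong to $\cB_k(n)$ since its entries $1/k^2$ need not be in $\tfrac1n\mathbb Z$. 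The fix is immediate and is exactly what the paper does in \eqref{eqAchNaor_12}: pick any $\hat\rho\in\Bwnie$ (such matrices exist, e.g.\ a suitable rounding of the product of the prescribed marginals), note $\norm{\hat\rho-\bar\rho}_2=O(\w/\sqrt n)$, and use smoothness of $f_2$ near $\bar\rho$ to get $f_2(\hat\rho)=f_2(\bar\rho)+O(\w^2/n)$, whence $\Erw\brk{Z^{(2)}_{k,\hat\rho}(\gnm)}\ge\exp\brk{nf_2(\bar\rho)+O(\ln n)}$ because $\w$ is fixed. With that repair your argument goes through and matches the paper's proof.
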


To prove this proposition, we first define a function  
\begin{equation*} \label{eq_f}
\bar f_2:\rho\in\Bw\ra\mathbb R,\quad\rho\mapsto \cH(\rho) + \frac{d}{2} \ln\bc{ 1 - \frac{2}{k} + \norm\rho_2^2 }.
\end{equation*}
The following lemma shows how $f_2$ defined in \eqref{eq_def_f_2} relates to $\bar f_2$.

\begin{lemma}\label{Lemma_f2bar}
	For $\rho=(\rho_{ij}) \in \Bw$, we have
	\begin{align*} 
	\exp\brk{nf_2(\rho)} \sim \exp\brk{n\bar f_2(\rho)+O\br{\w^2}}.
	\end{align*}
\end{lemma}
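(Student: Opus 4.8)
\textbf{Plan for proving Lemma~\ref{Lemma_f2bar}.} The statement compares $f_2(\rho)=\cH(\rho)+\frac d2\ln(1-\normA^2-\normB^2+\normC^2)$ with $\bar f_2(\rho)=\cH(\rho)+\frac d2\ln(1-\tfrac2k+\norm\rho_2^2)$ on the set $\Bw$, where the marginals $\rho_{i\star},\rho_{\star i}$ differ from $1/k$ by at most $\w/\sqrt n$. Since the entropy terms are literally identical, everything reduces to comparing the two logarithmic terms, i.e. to estimating the difference of the arguments $A:=1-\normA^2-\normB^2+\normC^2$ and $\bar A:=1-\tfrac2k+\norm\rho_2^2$. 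Note $\normC^2=\norm\rho_2^2$, so $A-\bar A=\tfrac2k-\normA^2-\normB^2$. The plan is to show $A-\bar A=O(\w/\sqrt n)$ and that $A,\bar A$ are bounded away from $0$ (both are $\sim 1-2/k+1/k^2=(1-1/k)^2>0$ on $\Bw$ since $\norm\rho_2^2$ is close to $k^{-2}$), so that $\ln A-\ln\bar A=O(\w/\sqrt n)$ by the mean value theorem, and hence $n(f_2(\rho)-\bar f_2(\rho))=\frac{dn}{2}\cdot O(\w/\sqrt n)=O(\sqrt n\,\w)$ — wait, that is not $O(\w^2)$. So I need a second-order cancellation, not just a first-order bound.

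\textbf{The key point: the first-order term vanishes, leaving $O(\w^2)$.} Write $\rho_{i\star}=\tfrac1k+x_i$ and $\rho_{\star i}=\tfrac1k+y_i$ with $\sum_i x_i=\sum_i y_i=0$ (because both $\rho_{\nix\star}$ and $\rho_{\star\nix}$ are probability vectors) and $|x_i|,|y_i|\le \w/\sqrt n$. Then
\begin{align*}
\normA^2=\sum_i\Bigl(\tfrac1k+x_i\Bigr)^2=\tfrac1k+\tfrac2k\sum_i x_i+\sum_i x_i^2=\tfrac1k+\sum_i x_i^2,
\end{align*}
and similarly $\normB^2=\tfrac1k+\sum_i y_i^2$. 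Hence $A-\bar A=\tfrac2k-\normA^2-\normB^2=-\sum_i x_i^2-\sum_i y_i^2$, which is $O(\w^2/n)$ since there are $k$ terms each of size $O(\w^2/n)$. Crucially the linear-in-$x,y$ terms cancelled because the marginals sum to $1$. Now with $A=\bar A+O(\w^2/n)$ and $\bar A\ge c>0$ uniformly on $\Bw$ (for $n$ large, using $\norm\rho_2^2\ge k^{-2}$ always and $\norm\rho_2^2=k^{-2}+O(\eta)$ or just $\norm\rho_2^2\le 1$ giving an upper bound too — in fact $\bar A\le 1-2/k+1$ is crude but $\bar A\ge 1-2/k+k^{-2}=(1-1/k)^2$ suffices for the lower bound, which is what matters for applying the MVT to $\ln$), I get $\ln A-\ln\bar A=O(\w^2/n)$, and therefore $n(f_2(\rho)-\bar f_2(\rho))=\frac d2\cdot n\cdot O(\w^2/n)=O(\w^2)$, as claimed. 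Exponentiating gives the stated asymptotic equivalence; more precisely $\exp[nf_2(\rho)]=\exp[n\bar f_2(\rho)]\exp[O(\w^2)]$, which is exactly $\exp[n\bar f_2(\rho)+O(\w^2)]$.

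\textbf{What to be careful about.} The one subtlety is uniformity: the $O(\w^2)$ must be uniform over all $\rho\in\Bw$ (and the constant may depend on $k$ and $d^\prime$, which is fine). This follows because the bounds $|x_i|,|y_i|\le\w/\sqrt n$ are uniform over $\Bw$ by definition, the count of terms is $k$, and the lower bound $\bar A\ge(1-1/k)^2>0$ holds for \emph{every} $\rho\in\cB_k$, not just balanced ones, so the Lipschitz constant of $\ln$ on the relevant interval is bounded uniformly. I should also note $\w=\w(n)$ may diverge, but the estimate only uses $\w^2/n=o(1)$ implicitly to keep $A$ close to $\bar A$ — and indeed on $\Bw$ we need $\w/\sqrt n$ small, which is the standing assumption (balanced densities), so $A-\bar A=O(\w^2/n)=o(1)$ is legitimate and the MVT applies on a fixed interval around $\bar A$. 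This is a short, purely computational lemma; there is no real obstacle, the only thing worth stating explicitly is the cancellation of the linear terms via $\sum_i x_i=\sum_i y_i=0$.
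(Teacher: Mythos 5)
Your proposal is correct and takes essentially the same route as the paper: parametrise the marginals as $1/k$ plus deviations of size at most $\w/\sqrt n$, show the difference $f_2(\rho)-\bar f_2(\rho)$ is $O(\w^2/n)$ uniformly on $\Bw$, and multiply by $n$. The only difference is cosmetic: you extract the second-order smallness by a direct expansion in which the linear terms cancel exactly because the deviations sum to zero, whereas the paper bounds the partial derivatives of $\zeta=f_2-\bar f_2$ and applies the fundamental theorem of calculus; your explicit cancellation (together with the uniform lower bound $1-2/k+\norm\rho_2^2\ge(1-1/k)^2$ before taking logarithms) is, if anything, the more transparent way to justify the same estimate.
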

\begin{proof}
We define the function 
	$$\zeta(\rho)=f_2(\rho)- \bar f_2(\rho)$$
and derive an upper bound on $\zeta(\rho)$. By definition, for each $\rho \in \Bw$ there exist $\alpha=(\alpha_i)_{i\in [k]}$ and $\beta=(\beta_j)_{j \in [k]}$ such that $\rho_{i\star}=\frac 1k+\alpha_i$ and $\rho_{\star j}=\frac 1k+\beta_j$ for all $i,j \in [k]$ with $|\alpha_i|, |\beta_j| \le \frac{\w}{\sqrt n}$. Thus, 
	\begin{align*}
	f_2(\rho)= \cH(\rho)+\frac d2\ln\br{1-\|\bar \rho_{\nix \star}+\alpha\|_2^2-\|\bar \rho_{\star \nix}+\beta\|_2^2+\|\rho\|_2^2}.
	\end{align*}
	As we are only interested in the difference between $f_2$ and $\bar f_2$, we can reparametrise $\zeta$ as
	\begin{align*}
	\zeta(\alpha, \beta)=\frac d2 \ln\br{\frac{1-\|\bar \rho_{\nix \star}+\alpha\|_2^2-\|\bar \rho_{\star \nix}+\beta\|_2^2+\|\rho\|_2^2}{1-\frac 2k+\|\rho\|_2^2}}.
	\end{align*} 
	Differentiating and simplifying the expression yields
	$\frac{\partial \zeta}{\partial \alpha_i}(\alpha, \beta), \frac{\partial \zeta}{\partial \beta_j}(\alpha, \beta)=O\br{\frac{\w}{\sqrt n}}$ for all $i,j \in [k]$. 
	According to the fundamental theorem of calculus, it follows that
	$$\max_{\rho\in\Bw}|\zeta(\rho)|=\int_{-\w/\sqrt n}^{\w/\sqrt n} O\br{\frac{\w}{\sqrt n}} d\alpha_1=O\br{\frac{\w^2}{n}},$$
	completing the proof.
\end{proof}
\begin{proof}[Proof of \Prop~\ref{Prop_AchNaor}]
Equation \eqref{eq_erw_Z_rho} combined with \Lem~\ref{Lemma_f2bar} reduces our task to studying the function $\bar f_2(\rho)$.
For the range of $d$ covered by \Prop~\ref{Prop_AchNaor}, this analysis is the main technical achievement of~\cite{AchNaor},
where (essentially) the following statement is proved.

\begin{lemma}\label{Lemma_AchNaor_Main}
Assume that $k \geq 3, \w\in N$ as well as $d^{\prime} \leq 2(k-1) \ln (k-1)$ and $d=2m/n$. For any $n >0$ and any overlap matrix $\rho \in \Bw$, we have
	\begin{align}\label{eqAchNaor_Main}
	\bar f_2(\rho) \leq \bar f_2(\bar{\rho}) - \frac{2(k-1) \ln (k-1) - d}{4(k-1)^2}\br{k^2 \| \rho \|_2^2 -1} + o(1). 
	\end{align}	
\end{lemma}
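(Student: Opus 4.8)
The goal is to prove Lemma~\ref{Lemma_AchNaor_Main}, the bound
$\bar f_2(\rho)\le \bar f_2(\bar\rho)-\frac{2(k-1)\ln(k-1)-d}{4(k-1)^2}(k^2\|\rho\|_2^2-1)+o(1)$ for all overlap matrices $\rho\in\Bw$, where $\bar f_2(\rho)=\cH(\rho)+\frac d2\ln(1-\tfrac2k+\|\rho\|_2^2)$.

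The plan is to reduce the statement to the entropy-versus-norm trade-off that is at the heart of the Achlioptas--Naor second moment argument. First I would observe that, because $\rho\in\Bw$ has row and column sums within $\w/\sqrt n=o(1)$ of $1/k$, the matrix $\rho$ is close to doubly stochastic up to an $o(1)$ additive error in every entry; hence it suffices to prove the inequality for genuinely doubly stochastic $\rho$ and absorb the discrepancy into the $o(1)$ term (the functions $\cH$ and $\log(1-2/k+\|\cdot\|_2^2)$ are Lipschitz on the relevant compact region, and a perturbation of size $O(\w/\sqrt n)$ in the entries changes $\bar f_2$ by $O(\w/\sqrt n\cdot\log n)=o(1)$ — here one uses that entries bounded away from $0$ contribute Lipschitz-continuously, while entries near $0$ are handled by the concavity of $x\mapsto -x\ln x$). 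So from now on assume $\rho$ doubly stochastic. Write $\|\rho\|_2^2=\frac1{k^2}+\xi$ with $\xi\ge 0$ measuring the deviation of $\rho$ from the barycentre $\bar\rho$; the claimed bound then reads $\bar f_2(\rho)\le \bar f_2(\bar\rho)-c_{k,d}\,k^2\xi+o(1)$ with $c_{k,d}=\frac{2(k-1)\ln(k-1)-d}{4(k-1)^2}>0$.

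Next I would split into the ``near'' and ``far'' regimes for $\xi$. For the far regime (say $\xi\ge \eps_0$ for a small absolute constant $\eps_0$), one invokes the core combinatorial estimate of Achlioptas--Naor: over all doubly stochastic $\rho$ with prescribed $\|\rho\|_2^2$, the entropy $\cH(\rho)$ is maximised and one gets $\cH(\rho)\le \ln k +\frac{k^2}{2}\big(\tfrac1{k^2}-\|\rho\|_2^2\big)\cdot(\text{something})$ — more precisely one uses the explicit bound from \cite{AchNaor} comparing $\cH(\rho)$ against a function of $\|\rho\|_2^2$, together with the elementary inequality $\ln(1-2/k+\|\rho\|_2^2)\le \ln((k-1)/k)^2+\frac{k^2}{(k-1)^2}\xi$ obtained from $\ln(1+t)\le t$ applied at $t=k^2\xi/(k-1)^2$. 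Combining these two one-variable bounds and using $d\le 2(k-1)\ln(k-1)$ to make the resulting coefficient of $\xi$ at most $-c_{k,d}k^2$ gives the claim in the far regime with room to spare. For the near regime ($\xi<\eps_0$) I would Taylor-expand both $\cH$ and $\frac d2\ln(1-2/k+\|\rho\|_2^2)$ around $\rho=\bar\rho$: the gradients of $\bar f_2$ vanish at $\bar\rho$ among doubly stochastic matrices (by symmetry / Lagrange conditions), so $\bar f_2(\rho)-\bar f_2(\bar\rho)=\tfrac12\langle \rho-\bar\rho,\,H(\rho-\bar\rho)\rangle+O(\|\rho-\bar\rho\|_2^3)$ where $H$ is the Hessian; a direct computation shows the Hessian form equals $-\tfrac{k^2}{2}\|\rho-\bar\rho\|_2^2+\tfrac{d}{2}\cdot\tfrac{k^2}{(k-1)^2}\|\rho-\bar\rho\|_2^2$ restricted to the mean-zero subspace (the $\cH$ Hessian contributes $-k^2$, the log term contributes $d k^2/(k-1)^2$), which is $\le -2c_{k,d}k^2\|\rho-\bar\rho\|_2^2=-2c_{k,d}k^2\xi$, and for $\eps_0$ small enough the cubic error is dominated, yielding the bound with constant $c_{k,d}$.

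The main obstacle is the far regime: proving that for doubly stochastic $\rho$ with $\|\rho\|_2^2$ bounded away from $1/k^2$ the entropy $\cH(\rho)$ is sufficiently smaller than $\ln k$ relative to $\tfrac d2\ln(1-2/k+\|\rho\|_2^2)$ is exactly the delicate global optimisation carried out in \cite{AchNaor}, and the cleanest route is to quote their estimate essentially verbatim rather than re-derive it; the subtlety is bookkeeping the constants so the $d\le 2(k-1)\ln(k-1)$ hypothesis turns into the stated coefficient $\frac{2(k-1)\ln(k-1)-d}{4(k-1)^2}$ with the correct sign and magnitude. The near-regime Hessian computation and the $\Bw$-to-doubly-stochastic reduction are routine by comparison.
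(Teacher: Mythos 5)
Your route is essentially the paper's. The paper's entire proof consists of two steps: quote the inequality for matrices with row and column sums exactly $1/k$ from \cite[\Sec~3]{AchNaor} (in the quantitative form also used in \cite{aco_plantsil}), and transfer it to $\Bw$ by continuity, since every $\rho\in\Bw$ is within $O(\w/\sqrt n)=o(1)$ of such a matrix and both $\bar f_2$ and $\|\rho\|_2^2$ then move by $o(1)$. Your opening reduction is exactly this transfer step, and your near-regime Hessian computation is correct: on the subspace of matrices with vanishing row and column sums the half-Hessian form is $\bigl(-\tfrac{k^2}{2}+\tfrac{dk^2}{2(k-1)^2}\bigr)\|\rho-\bar\rho\|_2^2$, and this is at most $-2c_{k,d}k^2\|\rho-\bar\rho\|_2^2$ precisely because $k-1\ge 2\ln(k-1)$ for all $k\ge3$, which leaves the factor-$2$ margin you need against the cubic error. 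It is, however, redundant once the Achlioptas--Naor bound is quoted, since that bound already covers a neighbourhood of $\bar\rho$.

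The only substantive weak point is your description of the far regime. What \cite{AchNaor} prove is a maximisation statement for the combined functional $\cH(\rho)+\frac d2\ln(1-\frac 2k+\|\rho\|_2^2)$ over doubly stochastic matrices; there is no standalone ``maximum entropy at given $\|\rho\|_2^2$'' estimate of the linear form your sketch requires (your ``(something)''). After linearising the logarithm as you propose, your sufficient condition becomes a global linear entropy-versus-norm inequality which is nearly tight rather than holding ``with room to spare'': at $\rho$ a permutation matrix scaled by $1/k$ and $d^{\prime}=2(k-1)\ln(k-1)$ the two sides of \eqref{eqAchNaor_Main} differ only by $\ln k-(k-1)\ln(k-1)\ln\frac{k}{k-1}=\Theta((\ln k)/k)$, and after the linearisation the remaining slack is only $\ln\frac{k}{k-1}$. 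Verifying that condition therefore amounts to redoing the delicate optimisation of \cite[\Sec~3]{AchNaor}, not to combining a quotable estimate with elementary bounds; since your stated fallback is to quote their result verbatim, your argument in the end coincides with the paper's citation. If you want the specific constant to come out of their qualitative statement (maximum attained at $\bar\rho$ for all $d\le d^*:=2(k-1)\ln(k-1)$) rather than quoting the quantitative form, the clean way is: write $\bar f_2(\rho)=\cH(\rho)+\frac{d^*}{2}L(\rho)-\frac{d^*-d}{2}L(\rho)$ with $L(\rho)=\ln(1-\frac 2k+\|\rho\|_2^2)$, apply their result at density $d^*$, and use $L(\rho)-L(\bar\rho)=\ln(1+t)\ge t/2$ with $t=\frac{k^2\|\rho\|_2^2-1}{(k-1)^2}\in[0,\frac{1}{k-1}]$; this yields exactly the correction term $\frac{d^*-d}{4(k-1)^2}\bigl(k^2\|\rho\|_2^2-1\bigr)$, with no case split needed.
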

\begin{proof}
For $\rho$ such that $\sum_{i=1}^k \rho_{ij} = \sum_{i=1}^k \rho_{ji} = 1/k$,
the bound~(\ref{eqAchNaor_Main}) is proved in \cite[\Sec~3]{AchNaor}.
This implies that~(\ref{eqAchNaor_Main}) also holds for $\rho\in\Bw$, because $\bar f_2$ is uniformly continuous on the compact set $\Bw$.
\end{proof}

Now, assume that $k$ and $d$ satisfy the assumptions of \Prop~\ref{Prop_AchNaor} and let $\nu \in N$ and $\eta >0$ be any fixed number.
Then, for any $\hat\rho \in \Bwni$, we have $\| \hat \rho- \bar{\rho}\|_2 =O\br{\frac{\w}{\sqrt n}}$. Consequently, we obtain with \eqref{eq_erw_Z_rho} that
\begin{align}\label{eqAchNaor_12}
\sum_{ \substack{\rho \in \Bwni \\ \| \rho - \bar{\rho} \|_2 \leq \eta  }} \Erw \brk{\Zrho^{(2)} (\gnm)}\geq  
\Erw \brk{Z_{k,\hat \rho}^{(2)} (\gnm)}\geq 
\exp \brk{n f_2(\bar{\rho}) +O(\ln n)}.
\end{align}

On the other hand, the function $\cB \to \mathbb{R}$,  $\rho \to k^2 \| \rho\|_2$ is smooth, strictly convex and attains its global minimum of $1$ at $\rho = \bar{\rho}$.
Consequently, there exist $(c_k)_k >0$ such that if $\| \rho - \bar{\rho} \|_2 > \eta$, then $\br{k^2\|\rho\|_2-1} \geq c_k$.
Hence, Fact~\ref{Fact_varZ}, \Lem~\ref{Lemma_f2bar} and \Lem~\ref{Lemma_AchNaor_Main} yield
	\begin{align}\label{eqAchNaor_11}
	\sum_{ \substack{\rho \in \Bwni\\ \| \rho - \bar{\rho} \|_2 > \eta  }}
		\Erw \brk{\Zrho^{(2)}(\gnm)} \leq \exp \brk{n f_2(\bar{\rho}) - n c_k d_k + o(n)},\quad\mbox{where }
			d_k = \frac{2(k-1) \ln (k-1)- d}{4 (k-1)^2} > 0.
	\end{align}

Combining~(\ref{eqAchNaor_11}) and~(\ref{eqAchNaor_12}), we conclude that
$\Erw\brk{\Zwni(\gnm)^2} \sim \Erw\brk{\Zwnie(\gnm)}$, 
thereby completing the proof of \Prop~\ref{Prop_AchNaor}.

\end{proof}

Having reduced our task to studying overlaps $\rho$ such that $\norm{\rho-\bar\rho}_2\leq\eta$ for a small but fixed $\eta>0$,
in this section we are going to argue that, in fact, it suffices to consider $\rho$ such that
	$\norm{\rho-\bar\rho}_2\leq n^{-3/8}$
(where the constant $3/8$ is somewhat arbitrary; any number smaller than $1/2$ would do).
More precisely, we have

\begin{proposition} \label{Prop_second_mom_new_v}
Assume that $k \geq 3$ and that $d^{\prime} < \dc$. Let $\nu,\w \in N$ and $s\in \Swn$. There exists a number $\eta_0=\eta_0(d^{\prime},k)$
such that for any $0<\eta<\eta_0$ we have %
$$ \Erw \brk{\Zwnie(\gnm)} \sim \Erw \brk{\Zwnis(\gnm)}. $$
\end{proposition}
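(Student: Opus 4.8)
The plan is to show that the overlap matrices lying in the ``annulus'' $\set{\rho\in\Bwni:n^{-3/8}<\norm{\rho-\bar\rho}_2\le\eta}$ contribute only an exponentially small fraction of $\Erw\brk{\Zwnie(\gnm)}$, so that they may be discarded. Since $n^{-3/8}<\eta$ for $n$ large, $\Bwnis\subseteq\Bwnie$, and linearity of expectation gives
\[
\Erw\brk{\Zwnie(\gnm)}=\Erw\brk{\Zwnis(\gnm)}+\sum_{\rho}\Erw\brk{\Zrho^{(2)}(\gnm)},
\]
the sum extending over the annulus; it suffices to bound this sum by $o\!\br{\Erw\brk{\Zwnis(\gnm)}}$. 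First I would record two routine reductions. By~\eqref{eq_erw_Z_rho} together with \Lem~\ref{Lemma_f2bar} (and the fact that $\w,\nu$ are fixed here), for every $\rho\in\Bw$ one has $\Erw\brk{\Zrho^{(2)}(\gnm)}=\exp\brk{n\bar f_2(\rho)+O(\ln n)}$, and once $\eta$ is small enough that the entries $\rho_{ij}$ stay bounded away from $0$, the error term is uniform over $\norm{\rho-\bar\rho}_2\le\eta$. Moreover every $\rho\in\cB_k(n)$ satisfies $n\rho_{ij}\in\set{0,\dots,n}$, so there are at most $(n+1)^{k^2}=n^{O(1)}$ overlap matrices; hence summing this estimate over any subset of them alters the exponent by a further $O(\ln n)$ only. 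The whole problem thus reduces to the local behaviour of $\bar f_2(\rho)=\cH(\rho)+\tfrac d2\ln\!\br{1-\tfrac2k+\norm{\rho}_2^2}$ near $\rho=\bar\rho$.

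The key step is the local analysis of $\bar f_2$ at the barycentre. Since $\bar f_2$ depends on $\rho$ only through the symmetric functions $\cH(\rho)$ and $\norm{\rho}_2^2$, its gradient at $\bar\rho$ has all entries equal, while every overlap matrix obeys $\sum_{i,j}(\rho_{ij}-k^{-2})=0$; hence the first-order term vanishes and a Taylor expansion gives
\[
\bar f_2(\rho)=\bar f_2(\bar\rho)+\tfrac12 Q(\rho-\bar\rho)+O\!\br{\norm{\rho-\bar\rho}_2^3},
\]
where a short computation (the second derivative of $\cH$ at $\bar\rho$ contributes $-k^2$, that of the logarithmic term $\tfrac{d}{(1-1/k)^2}$, on the tangent space $\set{\xi:\sum_{ij}\xi_{ij}=0}$) yields $Q(\xi)=\br{\tfrac{d}{(1-1/k)^2}-k^2}\norm{\xi}_2^2=-\tfrac{(k-1)^2-d}{(1-1/k)^2}\norm{\xi}_2^2$. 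Here the hypothesis enters: throughout the density regime of \Thm~\ref{Thm_main} one has $d'<(k-1)^2$ — immediately when $d'\le 2(k-1)\ln(k-1)$, since $2\ln(k-1)<k-1$ for all $k\ge3$, and via the expansion $\dc=(2k-1)\ln k-2\ln2+o(1)$ when $k\ge k_0$ and $d'<\dc$ — so that, as $d=2m/n\sim d'$, the constant $c:=\tfrac{(k-1)^2-d}{2(1-1/k)^2}$ is positive and bounded away from $0$ for $n$ large. Choosing $\eta_0=\eta_0(d',k)$ small enough that the cubic remainder is at most $c\norm{\rho-\bar\rho}_2^2$ on the ball of radius $\eta_0$, I obtain $\bar f_2(\rho)\le\bar f_2(\bar\rho)-c\norm{\rho-\bar\rho}_2^2$ for every overlap matrix with $\norm{\rho-\bar\rho}_2\le\eta\le\eta_0$.

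It then remains to put the two bounds together. For the lower bound on $\Erw\brk{\Zwnis(\gnm)}$ I would note that $\Bwnis$ contains, for $n$ large, an overlap matrix $\rho_0$ with $\norm{\rho_0-\bar\rho}_2=O(\w/\sqrt n)<n^{-3/8}$ — for instance an integer rounding of $\rho^0_i\rho^1_j$ for some $\rho^0,\rho^1\in\Awni$, a routine lattice-point construction (if no such point exists then $\Bwnis=\Bwnie=\emptyset$ and the claim is trivial). Because the first-order term of $\bar f_2$ at $\bar\rho$ vanishes on overlap matrices, $\bar f_2(\rho_0)=\bar f_2(\bar\rho)+O(\w^2/n)$, so $\Erw\brk{\Zwnis(\gnm)}\ge\Erw\brk{Z^{(2)}_{k,\rho_0}(\gnm)}=\exp\brk{n\bar f_2(\bar\rho)+O(\ln n)}$. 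For the upper bound, every $\rho$ in the annulus has $\norm{\rho-\bar\rho}_2^2>n^{-3/4}$, hence $\bar f_2(\rho)\le\bar f_2(\bar\rho)-cn^{-3/4}$ and $\Erw\brk{\Zrho^{(2)}(\gnm)}\le\exp\brk{n\bar f_2(\bar\rho)-cn^{1/4}+O(\ln n)}$; summing over the at most $(n+1)^{k^2}$ matrices in the annulus, $\sum_\rho\Erw\brk{\Zrho^{(2)}(\gnm)}\le\exp\brk{n\bar f_2(\bar\rho)-cn^{1/4}+O(\ln n)}$. Dividing, the ratio of the annulus contribution to $\Erw\brk{\Zwnis(\gnm)}$ is at most $\exp\brk{-cn^{1/4}+O(\ln n)}\to0$, which proves the proposition.

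The only genuinely delicate point is the sign of the Hessian $Q$, i.e.\ the inequality $d<(k-1)^2$ across the entire regime up to $\dc$; this is precisely the local content of the Achlioptas--Naor estimate and is what connects the statement to the condensation threshold. Everything else — the passage from $f_2$ to $\bar f_2$, the crude counting of overlap matrices, the auxiliary near-barycentre lattice point $\rho_0$, and the Taylor remainders — is bookkeeping entirely analogous to the corresponding step in~\cite{Rassmann,aco_plantsil}.
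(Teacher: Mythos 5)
Your proposal is correct and follows essentially the same route as the paper: the paper's Lemma~\ref{Lem_exp_saddle} performs the same quadratic expansion of the second-moment exponent at $\bar\rho$ (with the same Hessian constant $D(d,k)=k^2(1-d/(k-1)^2)$, positive exactly when $d<(k-1)^2$), and the proof of the proposition then combines the near-barycentre lower bound with the $\exp[-\Theta(n^{1/4})]$ annulus bound and polynomial counting of overlap matrices, just as you do. Your only deviations are cosmetic — you expand $\bar f_2$ via \Lem~\ref{Lemma_f2bar} instead of $f_2$ directly, and you make explicit the point $d'<(k-1)^2$ and the existence of a lattice point of $\Bwnis$ near $\bar\rho$, which the paper treats implicitly.
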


The key to proving this proposition is the following lemma. It specifies the expected number of pairs of solutions in the cases where the overlap matrices $\rho \in \Bwni$ satisfy $\| \rho - \bar{\rho} \|_2 \leq n^{-3/8}$ or $\| \rho - \bar{\rho} \|_2 \in (n^{-3/8}, \eta)$.

\begin{lemma}\label{Lem_exp_saddle}
Let $k \geq 3, d^{\prime} < (k-1)^2 $ and $d=2m/n$. Set
\begin{align}\label{eq_CD}
C_n(d,k) = \exp\brk{d/2} k^{k^2} (2 \pi n)^{\frac{1-k^2}{2}} \quad  \text{ and } \quad D(d,k) = k^2 \br{1 - \frac{d}{(k-1)^2}}.
\end{align}
\begin{itemize}
\item If $\rho \in \Bwnie$ satisfies $\| \rho - \bar{\rho} \|_2 \leq n^{-3/8}$, then 
	\begin{equation} \label{eq_aux_bound_Z_rho_1}
		\Erw \brk{\Zrho^{(2)} (\gnm)} \sim C_n(d,k) \exp \brk{2 n f_1(\rho^\star) -n  \frac{D(d,k)}{2}  \| \rho - \bar{\rho}\|_2^2}.
	\end{equation}
\item There exist numbers $\eta=\eta(d,k)  >0$ and $A=A(d,k) >0$ such that if $ \rho \in \Bwnie$ satisfies $\| \rho - \bar{\rho} \|_2 \in (n^{-3/8}, \eta)$, then
	\begin{equation} \label{eq_aux_bound_Z_rho_2}
		\Erw \brk{\Zrho^{(2)} (\gnm)} = \exp \brk{2 n f_1(\rho^\star) - A n^{1/4}}.
	\end{equation}
\end{itemize}
\end{lemma}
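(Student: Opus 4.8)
The plan is to read off both estimates from the pair first‑moment formula of Fact~\ref{Fact_varZ} by Taylor‑expanding $f_2$ about the barycentre $\bar\rho$. Write $\rho=\bar\rho+\xi$; as $\rho,\bar\rho\in\cB_k$ we have $\sum_{i,j}\xi_{ij}=0$. A direct computation gives $f_2(\bar\rho)=2\ln k+d\ln(1-1/k)=2f_1(\rho^\star)$, so $C_n(d,k)\exp\brk{nf_2(\bar\rho)}=C_n(d,k)\exp\brk{2nf_1(\rho^\star)}$. The gradient of $f_2$ at $\bar\rho$ is a scalar multiple of the all‑ones matrix: both $\nabla\cH$ and the gradient of $\ln(1-\normA^2-\normB^2+\normC^2)$ are constant at $\bar\rho$ (each of $\normA^2,\normB^2,\normC^2$ has a constant gradient there). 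Hence $\langle\nabla f_2(\bar\rho),\xi\rangle=0$ for every admissible $\xi$, and the expansion is governed by the Hessian.

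For the second‑order term of $f_2(\bar\rho+\xi)$, the entropy contributes $-\tfrac{k^2}{2}\norm\xi_2^2$, while differentiating $\tfrac d2\ln(1-\normA^2-\normB^2+\normC^2)$ twice — using that the argument equals $(1-1/k)^2$ at $\bar\rho$ — contributes $\tfrac{dk^2}{2(k-1)^2}$ times $\norm\xi_2^2$ minus the squared $2$‑norms of the row‑sum and column‑sum vectors of $\xi$. Adding these, the quadratic part of $f_2(\bar\rho+\xi)$ equals $-\tfrac{D(d,k)}{2}\norm{\rho-\bar\rho}_2^2$ up to a correction depending only on the marginals of $\rho$; since $d^{\prime}<(k-1)^2$ and $d\sim d^{\prime}$, for $n$ large $D(d,k)=k^2\br{1-d/(k-1)^2}$ is bounded away from $0$, so this leading quadratic form is negative definite. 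On $\Bwni$ the marginals of $\rho$ deviate from $(1/k,\dots,1/k)$ only by $O(\w/\sqrt n)$, so the marginal correction is of lower order there. Finally, the cubic Taylor remainder is $O(\norm\xi_2^3)$ uniformly on a fixed ball $\norm\xi_2\le\eta_0=\eta_0(d,k)$, because on such a ball each entry $\rho_{ij}$ and the argument $1-\normA^2-\normB^2+\normC^2$ stay bounded away from $0$, so $f_2$ is smooth.

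Both bullets now follow. If $\norm{\rho-\bar\rho}_2\le n^{-3/8}$, then $\norm{\rho-\bar\rho}_2^2=o(1)$, so Fact~\ref{Fact_varZ}(2) gives $\Erw\brk{\Zrho^{(2)}(\gnm)}\sim C_n(d,k)\exp\brk{nf_2(\rho)}$; since $n\norm{\rho-\bar\rho}_2^3\le n^{-1/8}=o(1)$, substituting the expansion yields $nf_2(\rho)=2nf_1(\rho^\star)-\tfrac{D(d,k)}{2}\,n\norm{\rho-\bar\rho}_2^2+o(1)$, which is~\eqref{eq_aux_bound_Z_rho_1}. If instead $\norm{\rho-\bar\rho}_2\in(n^{-3/8},\eta)$, choose $\eta=\eta(d,k)\le\eta_0$ small enough that the $O(\norm\xi_2^3)$ remainder is dominated by $\tfrac14 D(d,k)\norm\xi_2^2$ throughout $\norm\xi_2\le\eta$; then $f_2(\rho)\le 2f_1(\rho^\star)-\tfrac14 D(d,k)\norm{\rho-\bar\rho}_2^2$. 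Because all entries of $\rho\in\Bwnie$ are within $\eta$ of $k^{-2}$, the factor $\br{\prod_{i,j}\sqrt{2\pi\rho_{ij}}}^{-1}$ in Fact~\ref{Fact_varZ}(1) is $\Theta(1)$, so $\Erw\brk{\Zrho^{(2)}(\gnm)}=\exp\brk{nf_2(\rho)+O(1)}$; combining with $\norm{\rho-\bar\rho}_2^2>n^{-3/4}$ gives $\Erw\brk{\Zrho^{(2)}(\gnm)}\le\exp\brk{2nf_1(\rho^\star)-\tfrac14 D(d,k)\,n^{1/4}+O(1)}\le\exp\brk{2nf_1(\rho^\star)-An^{1/4}}$ for, say, $A=D(d,k)/8$ and $n$ large, which is~\eqref{eq_aux_bound_Z_rho_2}.

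The delicate step is the Hessian bookkeeping for the logarithmic part of $f_2$: one has to differentiate $\ln(1-\normA^2-\normB^2+\normC^2)$ twice at $\bar\rho$, watch the factor $d/(k-1)^2$ appear, and keep the row/column‑marginal directions separate from the ``interior'' ones — this is exactly where $d<(k-1)^2$, i.e.\ $D(d,k)>0$, is used and is what makes $\bar\rho$ a strict maximiser of $f_2$ in the present regime. Everything else reduces to uniform Taylor estimates, the only care being to fix $\eta_0$ small enough that $f_2$ is smooth and its Hessian dominates the cubic remainder on $\{\norm{\rho-\bar\rho}_2\le\eta_0\}$ (and, when this lemma is later fed into \Prop~\ref{Prop_second_mom_new_v}, to use that the number of admissible overlap matrices is only polynomial in $n$).
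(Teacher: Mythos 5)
Your proposal is correct and follows essentially the same route as the paper: Fact~\ref{Fact_varZ} together with a second-order Taylor expansion of $f_2$ about $\bar\rho$, using $f_2(\bar\rho)=2f_1(\rho^\star)$ and the Hessian coefficient $D(d,k)$ for the regime $\norm{\rho-\bar\rho}_2\le n^{-3/8}$, and a local quadratic upper bound plus the polynomially bounded prefactor for the annulus $\norm{\rho-\bar\rho}_2\in(n^{-3/8},\eta)$. Your explicit Hessian bookkeeping (including the row/column-marginal correction, which the paper's expansion~\eqref{eq_exp_f} silently absorbs into its error term, both arguments using the marginal constraints of $\Bwni$ to control it) only adds detail to the same argument.
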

\begin{proof} 

As Fact~\ref{Fact_varZ} yields
	$ \Erw \brk{\Zrho^{(2)} (\cG(n,m))}\sim C_n(d,k) \exp \brk{n f_2(\rho)},$ 
we have to analyse $f_2$. Expanding this function around $\bar{\rho}$ yields
\begin{align}\label{eq_exp_f}
 f_2(\rho)
 & = f_2(\bar{\rho}) - \frac{D(d,k)}{2} \| \rho-\bar \rho \|_2^2 + O(\|\rho-\bar{\rho}\|_2^3).  
\end{align}
Consequently, for $\| \rho - \bar{\rho} \|_2 \leq n^{-3/8}$,
	$$\exp \brk{n  f_2(\rho)} = \exp \brk{n f_2(\bar{\rho}) - n \frac{D(d,k)}{2} \| \rho- \bar{\rho}\|_2^2  + O(n^{-1/8})}.$$
As $f_2$ satisfies $f_2(\bar{\rho}) = 2 f_1(\rho^\star)$, the statement in \eqref{eq_aux_bound_Z_rho_1} follows.

To prove \eqref{eq_aux_bound_Z_rho_2}, we observe that similarly to \eqref{eq_exp_f} and because $f_2$ is smooth in a neighbourhood of $\bar{\rho}$, 
there exist $\eta >0$ and $A >0$ such that for $\| \rho - \bar{\rho} \|_2 \leq \eta$, 
$$ f_2(\rho) \leq f_2(\bar{\rho}) - A\| \rho - \bar{\rho} \|_2^2 . $$
Hence, if $\| \rho - \bar{\rho} \|_2 \in (n^{-3/8}, \eta)$, then
		 $$\Erw \brk{\Zrho^{(2)}(\gnm)} = O \br{n^{\frac{1-k^2}{2}}}
		 	\exp \brk{n f_2(\rho)}\leq %
					\exp \brk{2 n f_1(\rho^\star) - A n^{1/4}},$$ 
as claimed.
 \end{proof}

\begin{proof}[Proof of \Prop~\ref{Prop_second_mom_new_v}] We fix $s \in \Swn$. Further, we fix $\eta >0$ and $A>0$ as given by \Lem~\ref{Lem_exp_saddle}. 
For each $\hat \rho \in \Bwnie$, we have $\| \hat\rho- \bar{\rho}\|_2 =O\br{\frac{\w}{\sqrt{n}}}$ and obtain from the first part of \Lem~\ref{Lem_exp_saddle} that
	\begin{align}\label{eqVV01}
	\Erw \brk{\Zwnis(\gnm)}  \geq \Erw \brk{Z^{(2)}_{k,\rho_0} (\gnm)}\sim C_n(d,k) \exp \brk{2 n f_1(\rho^\star)+O\br{\w^2} }.
	\end{align}
On the other hand, because $|\Bwnie|$ is bounded by a polynomial in $n$,
the second part of \Lem~\ref{Lem_exp_saddle} yields
\begin{align} \label{eqVV02}
\sum_{\substack{\rho \in \Bwnie \\  \| \rho - \bar{\rho} \|_2 > n^{-3/8} }} \Erw \brk{\Zrho^{(2)}(\gnm)} &  \leq  \exp \brk{2 n f_1(\rho^\star)-A n^{1/6} + O(\ln n)} . %
  \end{align} 
Combining~(\ref{eqVV01}) and~(\ref{eqVV02}), we obtain
 \begin{align*} \Erw \brk{\Zwnie(\gnm)}  &\sim  \sum_{\substack{\rho \in \Bwnis}} \Erw \brk{\Zrho^{(2)}(\gnm)}\sim 
 \Erw \brk{\Zwnis(\gnm)}, \end{align*}
as claimed.
 \end{proof}

\subsection{Calculating the constant}
This section is dedicated to computing the contribution of the overlap matrices $\rho \in \Bwnis$. To this aim, we first show that in each region of the hypercube we can approximate $f_2$ by a function where the marginals are set to those of the centre of this region as defined in \eqref{rwnik}. More formally, let $f_2^s: \cB_k \to \mathbb R$ be defined as
\begin{align*} 
f_2^s:\rho \mapsto \cH(\rho)+\frac d2 \ln\br{1-2\|\rwnik\|_2^2+\|\rho\|_2^2}.
\end{align*}
Then the following is true
\begin{lemma}\label{Lem_f_2s}
	Let $k \ge 3, \w,\nu \in N$ and $C_n(d,k)$ as in \eqref{eq_CD}. Then for $\rho \in \Bwnis$ it holds that
	\begin{align*}
	\Erw \brk{\Zrho^{(2)}(\gnm)} &\sim C_n(d,k)\exp\brk{nf_2^s(\rho)+O\br{\frac{\w}{\nu}}}.
	\end{align*}
\end{lemma}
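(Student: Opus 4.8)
The plan is to start from the asymptotic expression for $\Erw\brk{\Zrho^{(2)}(\gnm)}$ provided by Fact~\ref{Fact_varZ}. Since $\rho\in\Bwnis$ satisfies $\|\rho-\bar\rho\|_2\le n^{-3/8}=o(1)$, part~(2) of Fact~\ref{Fact_varZ} applies directly and gives
\begin{align*}
\Erw\brk{\Zrho^{(2)}(\gnm)}\sim k^{k^2}(2\pi n)^{\frac{1-k^2}{2}}\exp\brk{d/2+nf_2(\rho)}=C_n(d,k)\exp\brk{nf_2(\rho)},
\end{align*}
where $f_2$ is as in \eqref{eq_def_f_2}. So the whole task reduces to showing that for $\rho\in\Bwnis$ we have $nf_2(\rho)=nf_2^s(\rho)+O(\w/\nu)$, i.e.\ $f_2(\rho)-f_2^s(\rho)=O(\w/(\nu n))$. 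This is the same type of estimate as in \Lem~\ref{Lemma_f2bar}, only now the reference marginals are $\rwnik$ (the centre of the region $\Bwnis$) rather than $\bar\rho_{\nix\star}$.

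The second step is to control the difference $\zeta(\rho):=f_2(\rho)-f_2^s(\rho)$. Since $\cH(\rho)$ cancels, we have
\begin{align*}
\zeta(\rho)=\frac d2\ln\br{1-\|\rho_{\nix\star}\|_2^2-\|\rho_{\star\nix}\|_2^2+\|\rho\|_2^2}-\frac d2\ln\br{1-2\|\rwnik\|_2^2+\|\rho\|_2^2}.
\end{align*}
For $\rho\in\Bwnis$ the marginals satisfy $\rho_{i\star},\rho_{\star i}\in[\rwnik_i-\tfrac1{\nu\sqrt n},\rwnik_i+\tfrac1{\nu\sqrt n})$, so writing $\rho_{i\star}=\rwnik_i+\alpha_i$, $\rho_{\star j}=\rwnik_j+\beta_j$ with $|\alpha_i|,|\beta_j|\le 1/(\nu\sqrt n)$, I would parametrise $\zeta$ by $(\alpha,\beta)$ exactly as in the proof of \Lem~\ref{Lemma_f2bar}. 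Expanding, $\|\rho_{\nix\star}\|_2^2=\|\rwnik\|_2^2+2\langle\rwnik,\alpha\rangle+\|\alpha\|_2^2$; since each $|\rwnik_i|\le \tfrac1k+\tfrac{\w}{\sqrt n}=O(1)$ and $\|\alpha\|_2=O(1/(\nu\sqrt n))$, the numerator and denominator of the logarithm differ by $O(\w/(\nu\sqrt n)\cdot 1/\sqrt n)+O(1/(\nu^2 n))=O(\w/(\nu n))$ — wait, more carefully: the cross term is $\langle\rwnik,\alpha\rangle=O(1)\cdot O(k/(\nu\sqrt n))$, but $\|\rwnik-\rho^\star\|_\infty=O(\w/\sqrt n)$, so actually $\langle\rwnik,\alpha\rangle=\langle\rho^\star,\alpha\rangle+\langle\rwnik-\rho^\star,\alpha\rangle$, and $\langle\rho^\star,\alpha\rangle=\tfrac1k\sum_i\alpha_i$. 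The cleanest way is to differentiate: $\partial\zeta/\partial\alpha_i=O(\w/\sqrt n)$ and $\partial\zeta/\partial\beta_j=O(\w/\sqrt n)$, uniformly over the region, by the same computation as in \Lem~\ref{Lemma_f2bar} (the $\w/\sqrt n$ comes from $\rwnik_i-$(anything symmetric)$=O(\w/\sqrt n)$ since the symmetric part of the derivative w.r.t.\ perturbing all marginals equally vanishes at the symmetric point). Then by the fundamental theorem of calculus, integrating over the box of side $O(1/(\nu\sqrt n))$ in each of the $2k$ coordinates gives $|\zeta(\rho)|\le \zeta$ at the centre (which is $0$) plus $O(\w/\sqrt n)\cdot O(1/(\nu\sqrt n))=O(\w/(\nu n))$.

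Finally, combining $f_2(\rho)=f_2^s(\rho)+O(\w/(\nu n))$ with the Fact~\ref{Fact_varZ} asymptotics gives
\begin{align*}
\Erw\brk{\Zrho^{(2)}(\gnm)}\sim C_n(d,k)\exp\brk{nf_2^s(\rho)+O(\w/\nu)},
\end{align*}
as claimed. The main (minor) obstacle is bookkeeping the derivative bound: one has to verify that $\partial\zeta/\partial\alpha_i=O(\w/\sqrt n)$ rather than merely $O(1)$, which relies on the fact that at $\alpha=\beta=0$ the gradient of $f_2$ in the marginal directions, restricted to the "diagonal" where all marginals shift equally, is exactly what $f_2^s$ is built to match — so the leading order cancels and only the $O(\w/\sqrt n)$ discrepancy between $\rwnik$ and the true symmetric point $\rho^\star$ survives. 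Everything else is routine Taylor expansion and the compactness/continuity argument already used for \Lem~\ref{Lemma_f2bar}.
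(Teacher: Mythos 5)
Your proposal is correct and follows essentially the same route as the paper: apply part (2) of Fact~\ref{Fact_varZ}, reparametrise the marginals around $\rwnik$ by $\alpha,\beta$ with $|\alpha_i|,|\beta_j|\le 1/(\nu\sqrt n)$, bound the partial derivatives of $\zeta^s=f_2-f_2^s$ by $O(\w/\sqrt n)$, and integrate via the fundamental theorem of calculus to get $|\zeta^s|=O(\w/(\nu n))$, hence the $O(\w/\nu)$ term after multiplying by $n$. Your extra remark about why the derivative bound is $O(\w/\sqrt n)$ rather than $O(1)$ is exactly the point the paper's computation relies on, so nothing is missing.
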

\begin{proof}
Equation (\ref{eq_Z_om}) of Fact~\ref{Fact_varZ} yields that
	\begin{align}\label{eq_sm_shift}
	\Erw \brk{\Zrho^{(2)}(\gnm)} \sim C_n(d,k)\exp\brk{nf_2(\rho)}.
	\end{align}
\noindent For $s \in \Swn$, we define the function 
	$$\zeta^s(\rho)=f_2(\rho)- f_2^s(\rho).$$
	To derive an upper bound on $\zeta^s(\rho)$ for all values $\rho \in \Bwnis$, we first we observe that there exist $\alpha=(\alpha_i)_{i\in [k]}$ and $\beta=(\beta_j)_{j \in [k]}$ such that the function $f_2$ can be expressed by setting $\rho_{i\star}=\rwnik_i+\alpha_i$ and $\rho_{\star j}=\rwnik_j+\beta_j$ for all $i,j \in [k]$ with $|\alpha_i|, |\beta_j| \le \frac{1}{\nu\sqrt n}$. Thus, 
	\begin{align*}
	f_2: \rho \mapsto \cH(\rho)+\frac d2\ln\br{1-\|\rwnik+\alpha\|_2^2-\|\rwnik+\beta\|_2^2+\|\rho\|_2^2}.
	\end{align*}
	As we are only interested in the difference between $f_2$ and $f_2^s$, we can reparametrise $\zeta^s$ as
	\begin{align*}
	\zeta^s(\alpha, \beta)=\frac d2 \ln\br{\frac{1-\|\rwnik+\alpha\|_2^2-\|\rwnik+\beta\|_2^2+\|\rho\|_2^2}{1-2\|\rwnik\|_2^2+\|\rho\|_2^2}}.
	\end{align*} 
	Differentiating and simplifying the expression yields
	$\frac{\partial \zeta^s}{\partial \alpha_i}(\alpha, \beta), \frac{\partial \zeta^s}{\partial \beta_j}(\alpha, \beta)=O\br{\frac{\w}{\sqrt n}}$ for all $i,j \in [k]$. 
	According to the fundamental theorem of calculus it follows for every $s \in \Swn$ that
	$$\max_{\rho\in\Bwnis}|\zeta^s(\rho)|=\int_{-\br{\nu\sqrt n}^{-1}}^{\br{\nu \sqrt n}^{-1}} O\br{\frac{\w}{\sqrt n}} d\alpha_1=O\br{\frac{\w}{n\nu}}.$$
	
	\noindent Combining this with (\ref{eq_sm_shift}) yields the assertion.
\end{proof}

Now we are able to give a very precise expression for the second moment. 

\begin{proposition}\label{Prop_second_mom_balanced_exact} Assume that
 $k \geq 3, \w,\nu \in N, d^{\prime} < (k-1)^2$ and $d=2m/n$. Let $s \in \Swn$. Then
\begin{align*}
\nonumber &\Erw \brk{\Zwnis(\gnm)}\\
& \qquad \qquad \sim_\nu \br{|\Aw| \br{{2 \pi n}}^{\frac{1-k}{2}} k^{k/2} \exp \brk{n f_1(\rwnik)}}^2 \exp\brk{d/2} \br{1- \frac{d}{(k-1)^2}}^{-\frac{(k-1)^2}{2} }.  
\end{align*}
\end{proposition}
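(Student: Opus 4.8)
The plan is to evaluate the sum $\Erw[\Zwnis(\gnm)] = \sum_{\rho \in \Bwnis} \Erw[\Zrho^{(2)}(\gnm)]$ by means of a Gaussian integral approximation, having first reduced $f_2$ to $f_2^s$ via \Lem~\ref{Lem_f_2s}. So first I would write, using \Lem~\ref{Lem_f_2s}, $\Erw[\Zwnis(\gnm)] \sim_\nu C_n(d,k) \exp(O(\w/\nu)) \sum_{\rho \in \Bwnis} \exp(n f_2^s(\rho))$. The key point is that $f_2^s$ is a function whose $\|\rho\|_2^2$-term and entropy term are tractable: since the marginals of all $\rho \in \Bwnis$ have been pinned (up to $O(1/(\nu\sqrt n))$) to $\rwnik$, the function $f_2^s$ depends on $\rho$ only through $\cH(\rho)$ and $\|\rho\|_2^2$, so its maximiser subject to the marginal constraints $\rho_{i\star} = \rwnik_i$, $\rho_{\star j} = \rwnik_j$ is the product matrix $\hat\rho$ with $\hat\rho_{ij} = \rwnik_i \rwnik_j$ (this is the standard second-moment ``independent colourings'' point; it follows by Lagrange multipliers, or by noting $\cH(\rho) \le \cH(\rho_{\nix\star}) + \cH(\rho_{\star\nix})$ with equality iff $\rho$ is a product). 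At $\hat\rho$ one checks $f_2^s(\hat\rho) = 2 f_1(\rwnik)$ by direct substitution, matching the prefactor $\exp(2n f_1(\rwnik))$ in the target.

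Next I would perform the local expansion of $f_2^s$ around $\hat\rho$. Writing $\rho = \hat\rho + \theta$ where $\theta$ is a $k\times k$ matrix with all row sums and column sums zero (the constraint surface has dimension $(k-1)^2$), a Taylor expansion gives $n f_2^s(\rho) = 2n f_1(\rwnik) - \tfrac n2 \langle \theta, H \theta\rangle + O(n\|\theta\|_2^3)$ for the Hessian $H$ of $f_2^s$ at $\hat\rho$ restricted to the zero-sum subspace; since we are on the regime $\|\rho - \bar\rho\|_2 \le n^{-3/8}$ the cubic error is $O(n \cdot n^{-9/8}) = o(1)$, and since $\rwnik = \rho^\star + O(s/(\nu\sqrt n))$ the Hessian agrees with that computed at $\bar\rho$ up to $o(1)$ corrections. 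The Hessian computation is the one already implicit in \eqref{eq_exp_f}: the entropy contributes $k^2 \cdot$ identity (in appropriate coordinates) and the $\ln(\cdots + \|\rho\|_2^2)$ term contributes the correction giving overall curvature $D(d,k) = k^2(1 - d/(k-1)^2)$ along the relevant directions — more precisely, restricted to the zero-sum subspace the quadratic form is $D(d,k)\|\theta\|_2^2$, which is positive definite exactly because $d < (k-1)^2$. Then I would convert the sum over the lattice $\Bwnis$ into a Gaussian integral: the spacing of the lattice of admissible $\rho$ inside the $(k-1)^2$-dimensional constraint surface is $\sim 1/n$ per coordinate, so $\sum_{\rho} \exp(-\tfrac n2 D\|\theta\|_2^2) \sim n^{(k-1)^2/2} \int \exp(-\tfrac n2 D\|x\|_2^2)\,dx \cdot (\text{lattice density factor}) = (2\pi/D)^{(k-1)^2/2}$, i.e. the sum is $\sim n^{(k-1)^2/2} (2\pi/(nD))^{(k-1)^2/2}$ up to the constant accounting for the covolume of the zero-sum lattice — which is exactly where the factor $|\Aw|^2$ enters, since $|\Aw|$ counts the admissible marginal vectors and the product structure of the maximiser means the number of admissible $\rho$ near $\hat\rho$ factors as (marginal count)$^2$ times the internal Gaussian volume.

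Finally I would collect constants. Combining $C_n(d,k) = \exp(d/2) k^{k^2}(2\pi n)^{(1-k^2)/2}$ with the Gaussian volume $(2\pi/(nD(d,k)))^{(k-1)^2/2}$ and $D(d,k) = k^2(1 - d/(k-1)^2)$, and matching against the square of the first-moment expression $|\Aw| (2\pi n)^{(1-k)/2} k^{k/2} \exp(n f_1(\rwnik))$, the powers of $n$, of $2\pi$, of $k$ must all cancel correctly: $(1-k^2) = 2(1-k) - (k-1)^2$ handles the $n$ and $2\pi$ exponents, $k^{k^2}/k^{(k-1)^2} = k^{2k-1}$ versus $k^k$ leaves a residual $k^{2k-1}/k^k = k^{k-1}$ which combines with the $\exp(d/2)$ and the $(1-d/(k-1)^2)^{-(k-1)^2/2}$ from $D^{-(k-1)^2/2}$; I should verify these bookkeeping identities carefully, as this is one place small slips hide. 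Taking $n \to \infty$ and then $\nu \to \infty$ absorbs the $O(\w/\nu)$ error from \Lem~\ref{Lem_f_2s} into the $\sim_\nu$, and since the statement is uniform over $s \in \Swn$ the proof is complete.

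\textbf{Main obstacle.} I expect the principal difficulty to be the Gaussian-integral/lattice-point step: one must pass from a sum over the discrete set $\Bwnis$ of admissible overlap matrices to a continuous integral, track the correct covolume of the lattice of zero-sum integer matrices scaled by $1/n$, and show that the $|\Aw|^2$ factor emerges precisely (rather than, say, $|\Aw|^2$ times a spurious constant) — this requires knowing that near the product maximiser $\hat\rho$ the admissible $\rho$'s are parametrised by (choice of marginals) $\times$ (internal fluctuation) in a way compatible with the restriction $\rho \in \Bwnis$, and that the fluctuations stay within the $n^{-3/8}$ ball with overwhelming contribution. The curvature computation and the constant bookkeeping are routine but error-prone; the lattice-to-integral conversion is the conceptual crux.
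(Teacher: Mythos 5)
Your plan is correct and follows essentially the same route as the paper: reduce to $f_2^s$ via \Lem~\ref{Lem_f_2s}, expand quadratically around the product point $\rho^s=(\rwnik_i\rwnik_j)_{ij}$ with curvature $D(d,k)$ (the small linear term caused by $\rwnik\neq\rho^\star$ is $o(n^{1/2})\|\eps\|_2$ and is absorbed exactly as you anticipate), decompose the sum as (choice of marginals)$\times$(zero-row/column-sum fluctuations), and evaluate the inner lattice sum by a Gaussian approximation. The ``covolume of the zero-sum lattice'' step you single out as the crux is precisely the content of \Lem~\ref{Lem_hessian_wormald}, proved via the matrix $\cH$ of \Lem~\ref{Lemma_WormaldMatrix} with $\det\cH=k^{2(k-1)}$, which supplies the $k^{-(k-1)}$ factor your bookkeeping needs.
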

		
The rest of this subsection will be dedicated to proving this proposition. 
In due course we are going to need the set of matrices with coefficients in $\frac1n\mathbb{Z}$ whose lines and columns sum to zero:
\begin{equation} \label{eq_def_cEn}
\cE_n = \cbc{ \br{\epsilon_{i,j}}_{\substack{1 \leq i \leq k \\ 1 \leq j \leq k} },\;\; \forall i, j \in [k],\; \epsilon_{i,j} \in \frac1n\mathbb{Z},\;
	\; \forall j \in [k],\;  \sum_{i = 1}^k \epsilon_{i j} = \sum_{i=1}^k \epsilon_{j i} = 0  } .
\end{equation}

The following result regards Gaussian summations over matrices in $\cE_n$. 

\begin{lemma}\label{Lem_hessian_wormald}
Let $k \geq 2$, $d^{\prime} < (k-1)^2$ and $D >0$ be fixed. Then
	\begin{equation*}
	\sum_{\epsilon \in \cE_n} \exp \brk{- n \frac{D}{2} \| \epsilon\|_2^2 + o(n^{1/2}) \| \epsilon\|_2}\sim \br{\sqrt{ {2 \pi}{n} }}^{(k-1)^2} D^{- \frac{(k-1)^2}{2}} k^{-(k-1)}.
	\end{equation*}
\end{lemma}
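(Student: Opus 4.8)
The plan is to interpret the sum over $\cE_n$ as a Riemann-sum approximation to a Gaussian integral over the linear subspace $\cE\subseteq\RR^{k\times k}$ of real matrices all of whose row sums and column sums vanish. First I would note that $\cE$ has dimension $(k-1)^2$: the constraints ``each of the $k$ row sums is zero'' and ``each of the $k$ column sums is zero'' amount to $2k-1$ independent linear equations (one is redundant, since the sum of all row sums equals the sum of all column sums), so $\dim\cE=k^2-(2k-1)=(k-1)^2$. The lattice $\cE_n=\cE\cap(\tfrac1n\ZZ)^{k\times k}$ is a scaled copy of a fixed lattice $\Lambda=\cE\cap\ZZ^{k\times k}$ inside $\cE$, namely $\cE_n=\tfrac1n\Lambda$, and the spacing of this lattice in $\cE$ controls the conversion factor between the sum and the integral: as $n\to\infty$,
\begin{align*}
\sum_{\epsilon\in\cE_n}\exp\brk{-n\tfrac D2\|\epsilon\|_2^2}\sim \frac{n^{(k-1)^2}}{\det\Lambda}\int_{\cE}\exp\brk{-n\tfrac D2\|x\|_2^2}\,\dd x,
\end{align*}
where $\dd x$ is $(k-1)^2$-dimensional Lebesgue measure on $\cE$ and $\det\Lambda$ is the covolume of $\Lambda$ in $\cE$ (both computed with respect to the inner product inherited from $\RR^{k\times k}$). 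The error term $o(n^{1/2})\|\epsilon\|_2$ in the exponent is harmless: on the bulk of the sum $\|\epsilon\|_2=O(n^{-1/2}\sqrt{\log n})$, so $o(n^{1/2})\|\epsilon\|_2=o(1)$ there, and the Gaussian tail kills the rest; this is the standard argument used in \cite{Janson, RobinsonWormald} and I would carry it out the same way.

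Next I would evaluate the Gaussian integral. Rescaling $x\mapsto x/\sqrt{nD}$ gives
\begin{align*}
\int_{\cE}\exp\brk{-n\tfrac D2\|x\|_2^2}\,\dd x=(nD)^{-(k-1)^2/2}\int_{\cE}\exp\brk{-\tfrac12\|y\|_2^2}\,\dd y=(nD)^{-(k-1)^2/2}\,(2\pi)^{(k-1)^2/2}.
\end{align*}
Combining this with the lattice-point count yields
\begin{align*}
\sum_{\epsilon\in\cE_n}\exp\brk{-n\tfrac D2\|\epsilon\|_2^2+o(n^{1/2})\|\epsilon\|_2}\sim\frac{1}{\det\Lambda}\br{\sqrt{2\pi n}}^{(k-1)^2}D^{-(k-1)^2/2},
\end{align*}
so the claimed formula is equivalent to the purely arithmetic identity $\det\Lambda=k^{k-1}$. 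I would prove this by exhibiting an explicit basis of $\cE$, or better, by a change-of-variables / projection argument: parametrise $\epsilon\in\cE$ freely by its top-left $(k-1)\times(k-1)$ block $(\epsilon_{ij})_{i,j<k}$ (the last row and last column are then determined by the zero-sum constraints), compute the Jacobian of the linear map from the free block to the full matrix $\epsilon$ with respect to the Euclidean structure, and take a determinant; the factor $k^{k-1}$ arises as $\sqrt{\det(G)}$ where $G$ is the Gram matrix of this parametrisation, and a short computation (the relevant matrix is $I+J$-type on each of the two index sets) gives $\det G=k^{2(k-1)}$.

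The main obstacle is precisely this last determinant computation: getting the covolume factor $k^{k-1}$ exactly right, rather than up to an unidentified constant, requires care about which lattice and which inner product one works with, and it is the only place where the specific number in the statement is pinned down. Everything else—the dimension count, the sum-to-integral passage, the Gaussian evaluation, and the absorption of the $o(n^{1/2})\|\epsilon\|_2$ term—is routine and follows the template of small-subgraph-conditioning arguments in \cite{Janson, RobinsonWormald, aco_Wormald}. An alternative route that avoids computing $\det\Lambda$ by hand would be to recognise the left-hand side as (essentially) the local central limit / Fourier-inversion expression for a sum of independent lattice-valued contributions and read off the normalising constant from a Poisson-summation or characteristic-function identity; this reorganisation may in fact be the cleanest way to present the argument, but either way the constant $k^{k-1}$ is the crux.
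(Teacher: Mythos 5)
Your proposal is correct and is essentially the paper's own argument in different clothing: the paper parametrises $\eps\in\cE_n$ by its free $(k-1)\times(k-1)$ block, so the sum runs over $(\tfrac1n\mathbb{Z})^{(k-1)^2}$ and $\|\eps\|_2^2$ becomes the quadratic form of a matrix $\cH$ with $\det\cH=k^{2(k-1)}$ (quoted from Kemkes, P\'erez-Gim\'enez and Wormald), which is exactly your Gram matrix $G$, so your covolume $\sqrt{\det G}=k^{k-1}$ and the paper's Gaussian factor $(\det\cH)^{-1/2}$ are the same quantity entering in the same way. The Euler--Maclaurin/Gaussian-integration step and the absorption of the $o(n^{1/2})\|\eps\|_2$ term are treated in the paper no more carefully than in your sketch, so there is no gap relative to the paper's own level of detail.
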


\Lem~\ref{Lem_hessian_wormald} and its proof are very similar to an argument used in~\cite[\Sec~3]{Kemkes}. In fact, \Lem~\ref{Lem_hessian_wormald} follows from

\begin{lemma}[{\cite[\Lem~6 (b) and 7 (c)]{Kemkes}}]\label{Lemma_WormaldMatrix}
	There is a $(k-1)^2\times(k-1)^2$-matrix $\cH=(\cH_{(i,j),(k,l)})_{i,j,k,l\in[k-1]}$ such that for any $\eps=(\eps_{ij})_{i,j\in[k]}\in\cE_n$ we have
	$$\sum_{i,j,i',j'\in[k-1]}\cH_{(i,j),(i',j')}\eps_{ij}\eps_{i'j'}=\norm\eps_2^2.$$
	This matrix $\cH$ is positive definite and $\det\cH = k^{2(k-1)} $.
\end{lemma}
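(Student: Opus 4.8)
The plan is to exhibit $\cH$ explicitly as a Kronecker square of a small matrix and to read off positive definiteness and the determinant from that factorisation. First I would coordinatise $\cE_n$: a matrix $\eps=(\eps_{ij})$ with vanishing row and column sums is completely determined by its top-left $(k-1)\times(k-1)$ block $X=(\eps_{ij})_{1\le i,j\le k-1}$, since the remaining entries are forced by $\eps_{ik}=-\sum_{j<k}\eps_{ij}$, $\eps_{kj}=-\sum_{i<k}\eps_{ij}$ and $\eps_{kk}=\sum_{i,j<k}\eps_{ij}$; one checks that with these choices the last row and the last column also sum to $0$. Compactly this reads $\eps=NXN^{\top}$, where $N$ is the $k\times(k-1)$ matrix whose first $k-1$ rows form the identity $I_{k-1}$ and whose last row is $(-1,\dots,-1)$. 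The fact that every column of $N$ sums to $0$ is precisely what forces all row and column sums of $NXN^{\top}$ to vanish, and since $N$ has full column rank, $X\mapsto NXN^{\top}$ is a linear bijection from $\RR^{(k-1)^2}$ onto the space of real matrices with zero line sums, restricting to a bijection onto $\cE_n$.

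Second, under this parametrisation the Frobenius norm of $\eps$ becomes, by cyclicity of the trace,
\[
\norm{\eps}_2^2=\mathrm{tr}\!\big((N^{\top}N)\,X\,(N^{\top}N)\,X^{\top}\big)=\sum_{i,j,i',j'\in[k-1]}(N^{\top}N)_{ii'}(N^{\top}N)_{jj'}\,\eps_{ij}\eps_{i'j'},
\]
so the matrix required by the lemma is $\cH=(N^{\top}N)\otimes(N^{\top}N)$ (renaming the running indices $k,l$ of the statement to $i',j'$ to avoid the clash with the number of colours), i.e. $\cH_{(i,j),(i',j')}=(N^{\top}N)_{ii'}(N^{\top}N)_{jj'}$. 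A one-line computation gives $N^{\top}N=I_{k-1}+\vecone\vecone^{\top}$, with diagonal entries $2$ and off-diagonal entries $1$, so that $\cH_{(i,j),(i',j')}=(1+\delta_{ii'})(1+\delta_{jj'})$ with $\delta$ the Kronecker symbol, and the displayed identity holds for every $\eps\in\cE_n$ — indeed for every $\eps$ with zero line sums. Positive definiteness is then immediate: $\cH=(N\otimes N)^{\top}(N\otimes N)$ with $N\otimes N$ injective (because $N$ is), and equivalently $\sum\cH_{(i,j),(i',j')}\eps_{ij}\eps_{i'j'}=\norm{\eps}_2^2>0$ whenever $X\ne0$.

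Third, for the determinant I would invoke the Kronecker rule $\det(A\otimes B)=(\det A)^{q}(\det B)^{p}$ for $A\in\RR^{p\times p}$, $B\in\RR^{q\times q}$, here with $A=B=N^{\top}N$ and $p=q=k-1$, so that $\det\cH=(\det N^{\top}N)^{2(k-1)}$. Finally $\det(I_{k-1}+\vecone\vecone^{\top})=1+\vecone^{\top}\vecone=k$ by the matrix determinant lemma (equivalently: $\vecone\vecone^{\top}$ has eigenvalue $k-1$ once and $0$ with multiplicity $k-2$, so $I_{k-1}+\vecone\vecone^{\top}$ has eigenvalue $k$ once and $1$ with multiplicity $k-2$), whence $\det\cH=k^{2(k-1)}$.

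The argument is short linear algebra; the only points that demand genuine care are checking that $\eps=NXN^{\top}$ really parametrises all of $\cE_n$ and is compatible with the Frobenius inner product, and keeping the exponents in the Kronecker determinant formula straight — I would regard the verification of the parametrisation as the main obstacle, though it is entirely routine. As a remark, the computation can be phrased invariantly: writing $V=\{v\in\RR^{k}:\sum_i v_i=0\}$, the real span of $\cE_n$ is $V\otimes V$, the Frobenius form restricts to the tensor square of the inner product on $V$, and $\cH$ is the Gram matrix of that inner product in the coordinates ``first $k-1$ entries'' — namely $I_{k-1}+\vecone\vecone^{\top}$ — tensored with itself; but the matrix manipulation above is the most self-contained route.
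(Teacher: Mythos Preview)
Your proof is correct. The parametrisation $\eps=NXN^{\top}$ with $N=\begin{pmatrix}I_{k-1}\\-\vecone^{\top}\end{pmatrix}$ is exactly what is needed; the trace computation yielding $\cH=(N^{\top}N)\otimes(N^{\top}N)=(I_{k-1}+\vecone\vecone^{\top})^{\otimes 2}$ is clean, and both positive definiteness and $\det\cH=k^{2(k-1)}$ follow immediately from the eigenvalues $k,1,\ldots,1$ of $I_{k-1}+\vecone\vecone^{\top}$ together with the Kronecker determinant rule.

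As for comparison: the paper does not prove this lemma at all but simply imports it from \cite[\Lem~6~(b) and 7~(c)]{Kemkes}. Your argument is therefore a genuine addition rather than a reworking of something in the paper. In \cite{Kemkes} the same matrix $I_{k-1}+\vecone\vecone^{\top}$ appears (their $J$), and their Lemma~6(b) records $\det J=k$ while Lemma~7(c) identifies the Hessian of the relevant function as the Kronecker square; your route via $\eps=NXN^{\top}$ and cyclicity of the trace is essentially the same construction written more compactly, with the advantage that positive definiteness comes for free from the factorisation $\cH=(N\otimes N)^{\top}(N\otimes N)$ rather than from a separate eigenvalue argument.
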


The Proof of \Lem~\ref{Lem_hessian_wormald} can be found in \Sec~\ref{Sec_appendix_second}. 
		
Now we are ready to prove \Prop~\ref{Prop_second_mom_balanced_exact}.	

\begin{proof}[Proof of \Prop~\ref{Prop_second_mom_balanced_exact}]
\Lem~\ref{Lem_f_2s} states that for every $\rho \in \Bwnis$ we have
 \begin{align}\label{eq_sm_Erw}
\Erw \brk{\Zrho^{(2)}(\gnm)} &\sim C_n(d,k)\exp\brk{nf_2^s(\rho)+O\br{\frac{\w}{\nu}}}.
 \end{align}
Thus, all we have to do is analysing the function $f_2^s$ for $s \in \Swn$. To this aim, we expand $f_2^s(\rho)$ around $\rho=\rho^s$ where $\rho^s=(\rho^s_{ij})_{i,j}$ with $\rho_{ij}=\rwnik_i\cdot \rwnik_j$. Then with $D(d,k)$ as defined in \eqref{eq_CD} we have
\begin{align}\label{eq_sm_expand}
f_2^s(\rho)=f_2^s\br{\rho^s}+\Theta\br{\frac \w n}\|\rho-\rho^s\|_2-\frac{D(d,k)}2\|\rho-\rho^s\|_2^2+o(n^{-1}).
\end{align}
Combining \eqref{eq_sm_expand} with \eqref{eq_sm_Erw}, we find that
\begin{align}\label{eq_sm_Zrho}
\Erw \brk{\Zrho^{(2)}(\gnm)} &\sim C_n(d,k)\exp\brk{nf_2^s\br{\rho^s}+\Theta\br{\w}\|\rho-\rho^s\|_2-n\frac{D(d,k)}2\|\rho-\rho^s\|_2^2+O\br{\frac{\w}{\nu}}}.
\end{align} 
For two vectors of ``marginals'' $\rho^0, \rho^1 \in \Bwni$, we introduce the set of overlap matrices 
$$\Bwnisr = \{ \rho \in \Bwnis: \rho_{\nix\star}= \rho^0, 
		\rho_{\star\nix} = \rho^1 \}.$$
and observe that with this definition we have
\begin{align} \label{eq_sm_Comp}
 \Erw \brk{\Zwnis(\cG(n,m))} = \sum_{ \rho^0, \rho^1 \in \Bwni} \sum_{\rho \in \Bwnisr} \Erw \brk{\Zrho^{(2)}(\gnm)}.
\end{align}
In particular, the set $\Bwnisr$ contains the ``product'' overlap $\rho^0 \otimes \rho^1$ defined by
$ ( \rho^0 \otimes \rho^1)_{ij} = \rho^0_i \rho^1_j $
for $i,j \in [k]$. 
To proceed, we fix two colour densities $\rho^0, \rho^1 \in \Bwni$ and simplify the notation by writing 
$$ \widehat{\cB} = \Bwnisr, \qquad \widehat{\rho} = \rho^0 \otimes \rho^1. $$
Thus, the inner sum from \eqref{eq_sm_Comp} simplifies to 
$$\cS_1=\sum_{\rho \in \widehat{\cB}} \Erw \brk{\Zrho^{(2)}(\gnm)}.$$
and we are going to evaluate this quantity. We observe that with $\cE_n$ as defined in \eqref{eq_def_cEn}, for each $\rho\in\widehat{\cB}$ we can find  $\eps\in \cE_n$ such that 
 \begin{align*}
 \rho=\widehat\rho+\eps.
 \end{align*}
Hence, this gives $\left\|\rho-\rho^s\right\|_2=\left\|\widehat\rho+\eps-\rho^s\right\|_2$ and the triangle inequality yields
 \begin{align*}
 \left\|\eps\right\|_2-\left\|\widehat\rho-\rho^s\right\|_2\le \left\|\widehat\rho+\eps-\rho^s\right\|_2 \le \left\|\eps\right\|_2+\left\|\widehat\rho-\rho^s\right\|_2.
 \end{align*}
 By definition of $\widehat \rho$ and $\rho^s$, we have $\left\|\widehat\rho-\rho^s\right\|_2\le\frac{1}{\nu\sqrt n}$ and consequently 
 \begin{align}\label{eq_sm_Norm}
 \left\|\rho-\rho^s\right\|_2=\left\|\boldsymbol \eps\right\|_2+O\br{\frac{1}{\nu\sqrt n}}.
 \end{align}
Observing that $f_2^s\br{\rho^s}=\br{f_1(\rwnik)}^2$  and inserting \eqref{eq_sm_Norm} into \eqref{eq_sm_Zrho} while taking first $n \to \infty$ and afterwards $\nu \to \infty$, we obtain
\begin{align} \label{eq_sm_S_1}  
\cS_1
&  \sim_{\nu} C_n(d,k) \exp \brk{2 n f_1^s\br{\rwnik}}\sum_{\substack{\rho \in \widehat{\cB}}}  \exp \brk{- n \frac{D(d,k)}{2}\left\| \eps\right\|_2^2+o(n^{1/2})\left\|\eps\right\|_2 }.
\end{align}
To apply \Lem~\ref{Lem_hessian_wormald}, we have to relate $\rho \in \widehat{\cB}$ to $\eps \in \cE_n$. From the definitions we obtain
$$\left \{ \widehat{\rho} + \eps:  \eps \in \cE_n, \| \eps \|_2  \leq {n^{-3/8}}/2  \right \} \subset 
	\left \{ \rho \in \widehat{\cB}\right \} 
	\subset \left \{ \widehat{\rho} + \eps:  \eps \in \cE_n \right  \}. $$
We show that the contribution of $\eps \in \cE_n$ with $\| \eps \|_2  > {n^{-3/8}}/2 $ is negligible:  
 \begin{align*}
 \cS_2&= C_n(d,k) \exp \brk{2 n f_1^s\br{\rwnik}} \sum_{\substack{ \epsilon \in \cS_n \\ \| \epsilon\|_2 > n^{-3/8}/2}} \exp \brk{- n \frac{D(d,k)}{2} \| \epsilon \|_2^2 (1+o(1))}
  \\
 &= C_n(d,k) \exp \brk{2 n f_1^s\br{\rwnik}} \sum_{\substack{l \in \mathbb{Z}/n \\ l > n^{-3/8}/2}} \sum_{ \substack{\epsilon \in S_n \\ \| \epsilon\|_2 = l}}  \exp \brk{- n l^2 \frac{D(d,k)}{2} (1+o(1))} \\ 
 &= C_n(d,k) \exp \brk{2 n f_1^s\br{\rwnik}} O \br{n^{k^2}} \exp \brk{- \frac{D(d,k)}{2} n^{1/4}} 
 \end{align*}
Consequently, (\ref{eq_sm_S_1}) yields $\Sigma_2=o(\Sigma_1)$.
Thus, we obtain from \Lem~\ref{Lem_hessian_wormald}  that	\begin{align}\label{eq_sm_S_1_1}
\nonumber \cS_1 	&  \sim_{\nu} C_n(d,k) \exp \brk{2 n f_1^s\br{\rwnik}}\sum_{\substack{\rho \in \widehat{\cB}}}   \exp \brk{- n \frac{D(d,k)}{2} \| \epsilon\|_2^2 + o(n^{1/2}) \| \epsilon\|_2}.  \\
&\sim_{\nu} C_n(d,k) \exp \brk{2 n f_1^s\br{\rwnik}}\br{\sqrt{2\pi n}}^{(k-1)^2}k^{-k(k-1)}\br{1-\frac{d}{(k-1)^2}}^{-\frac{(k-1)^2}{2}}. 
	\end{align}
In particular, the last expression  is independent of the choice of the vectors $\rho^0, \rho^1$ that defined $\widehat{\cB}$.
Therefore, substituting~(\ref{eq_sm_S_1_1}) in the decomposition (\ref{eq_sm_Comp}) completes the proof of \Prop~\ref{Prop_second_mom_balanced_exact}.
\end{proof}

 \begin{proof}[Proof of \Prop~\ref{Prop_ratio_sec_first}] 
 First observe that $$ \exp \brk{\sum_{l \geq 2} \lambda_l \delta_l^2}= \left( 1- \frac{d}{(k-1)^2}\right)^{-\frac{(k-1)^2}{2}} \exp \brk{- \frac{d}{2}}.$$
 \Prop~\ref{Prop_ratio_sec_first} is immediately obtained by combining \Lem~\ref{Lem_fm_shift} with \Prop s~\ref{Prop_AchNaor},~\ref{Prop_second_mom_new_v} and \ref{Prop_second_mom_balanced_exact}.
\end{proof}

\subsection{Up to the condensation threshold}

In this last subsection we prove \Prop~\ref{Prop_first_mom_tame_bal}. 
In the regime $2(k-1)\ln(k-1)\leq d^{\prime}<\dc$ for $k\ge k_0$ for some big constant $k_0$, we consider random variables $\Ztame$ instead of $\Zwni$. To prove the proposition we show the following result by adapting our setting in a way that we can apply the second moments argument from~\cite{Danny} and \cite{Cond}.

\begin{proposition}\label{Prop_AcoVil}
Let $\w, \nu \in N$. There is a constant $k_0>3$ such that for
 $k\geq k_0$ and $2(k-1) \ln (k-1) \leq d^{\prime} < \dc$ the following is true.
For each $s \in \Swn$, there exists an integer-valued random variable
	$0 \leq \Ztame \leq \Zwni$ that satisfies $$ \Erw\brk{\Ztame(\gnm)} \sim \Erw \brk{\Zwni(\gnm)}$$ 
and such that for any fixed $\eta >0$ we have $\Erw\brk{\Ztame(\gnm)^2} \leq(1+o(1)) \Erw\brk{\Zwnie(\gnm)}.$
\end{proposition}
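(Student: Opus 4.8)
The plan is to run the ``tame'' (separability) second-moment argument of Coja-Oghlan--Vilenchik~\cite{Danny} and Bapst et al.~\cite{Cond} for the refined variables $\Zwni$ in place of $Z_k$. Concretely, I would take $\Ztame(\gnm)$ to be the number of colourings $\sigma$ with $\rho(\sigma)\in\Awni$ that are \emph{tame} in the precise sense used in~\cite{Danny,Cond}: roughly, $\sigma$ is tame if for all but $o(n)$ vertices $v$ and every colour $c$ the number of $c$-coloured neighbours of $v$ lies within a prescribed window around its typical value, the precise quantitative condition being the one of~\cite[\Sec~2]{Danny}. Since tameness is an additional restriction, $0\le\Ztame\le\Zwni$ holds by construction, and it only remains to verify the first- and second-moment estimates.

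For the first-moment claim $\Erw\brk{\Ztame(\gnm)}\sim\Erw\brk{\Zwni(\gnm)}$ I would use the planted-model identity
\[
\frac{\Erw\brk{\Ztame(\gnm)}}{\Erw\brk{\Zwni(\gnm)}}=\pr\brk{\SIGMA\text{ is tame}},
\]
where $\SIGMA$ is the planted colouring of the planted random graph with colour density conditioned to lie in $\Awni$; this holds because $\pr\brk{\sigma\text{ colours }\gnm}$ depends on $\sigma$ only through $\rho(\sigma)$. Conditioned on $\SIGMA$, the $m$ edges of the planted graph are i.i.d.\ uniform among the bichromatic pairs, so for a fixed vertex the vector of neighbourhood colour counts is a sum of $m$ i.i.d.\ indicators; a Chernoff bound together with a union bound over the $n$ vertices and $k$ colours shows that the expected number of untame vertices is $o(n)$, whence $\pr\brk{\SIGMA\text{ tame}}=1-o(1)$ by Markov's inequality. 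This is exactly the computation carried out in~\cite{Danny,Cond}; the only change is that the per-colour means shift by $O(\w/\sqrt n)$ because of the $\Awni$-restriction, which is harmless. Combined with \Prop~\ref{Prop_first_mom} and \Lem~\ref{Lem_fm_shift} this yields the first assertion.

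For the second moment I would write $\Erw\brk{\Ztame(\gnm)^2}=\sum_{\rho\in\Bwni}\Erw\brk{\widetilde Z^{(2)}_{k,\rho}(\gnm)}$, where $\widetilde Z^{(2)}_{k,\rho}$ counts \emph{pairs of tame} colourings with overlap $\rho$, and split the sum at $\norm{\rho-\bar\rho}_2=\eta$. For the near part, $\widetilde Z^{(2)}_{k,\rho}\le\Zrho^{(2)}$ gives $\sum_{\norm{\rho-\bar\rho}_2\le\eta}\Erw\brk{\widetilde Z^{(2)}_{k,\rho}(\gnm)}\le\Erw\brk{\Zwnie(\gnm)}$ for free, because $\Bwnie=\{\rho\in\Bwni:\norm{\rho-\bar\rho}_2\le\eta\}$. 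For the far part I would invoke the core estimate of~\cite{Danny,Cond}: for $k\ge k_0$ and $2(k-1)\ln(k-1)\le d^{\prime}<\dc$ there is $\xi=\xi(\eta,d,k)>0$ with
\[
\tfrac1n\ln\Erw\brk{\widetilde Z^{(2)}_{k,\rho}(\gnm)}\le 2f_1(\rho^\star)-\xi\qquad\text{for all }\rho\in\Bwni\text{ with }\norm{\rho-\bar\rho}_2>\eta;
\]
as $|\Bwni|=n^{O(1)}$, summing gives a far contribution of order $\exp\brk{2nf_1(\rho^\star)-\xi n+O(\ln n)}$. Since \Lem~\ref{Lem_exp_saddle}, applied to the product overlap $\rwnik\otimes\rwnik\in\Bwnie$, shows that $\Erw\brk{\Zwnie(\gnm)}$ is of order $\exp\brk{2nf_1(\rho^\star)+O(\w^2)}$ up to polynomial factors, the far part is $o\brk{\Erw\brk{\Zwnie(\gnm)}}$, and adding the two bounds gives $\Erw\brk{\Ztame(\gnm)^2}\le(1+o(1))\Erw\brk{\Zwnie(\gnm)}$.

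The main obstacle is the displayed core estimate, i.e.\ transferring the second-moment analysis of~\cite{Danny,Cond}---stated there for $O(1)$-balanced colour densities and without the $(\nu,s)$-refinement---to overlap matrices whose marginals are merely $(\w,n)$-balanced. I would handle this exactly as in \Lem~\ref{Lemma_f2bar}: moving the marginals of $\rho$ from $\rho^\star$ to an arbitrary point of $\Awni$ perturbs the relevant entropy functional, and hence the exponent, by only $O(\w^2/n)=o(1)$ uniformly, and the tameness condition is insensitive to such shifts, so the optimisation over tame overlaps far from $\bar\rho$ is governed by the same rate function as in~\cite{Danny,Cond}. One also checks the bookkeeping point that the per-piece variables $\Ztame$, $s\in\Swn$, together exhaust the tame $(\w,n)$-balanced count and each obeys the first-moment asymptotics, which is immediate from the definition. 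Modulo these points the proof is the argument of~\cite{Danny,Cond} run verbatim for the refined random variables.
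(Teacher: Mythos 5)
There is a genuine gap, and it sits exactly at the step you label ``the displayed core estimate'' and then invoke from \cite{Danny,Cond}. Your tameness notion (neighbourhood colour statistics) is not the one those papers use, and more importantly it cannot deliver the uniform bound $\tfrac1n\ln\Erw\brk{\widetilde Z^{(2)}_{k,\rho}}\le 2f_1(\rho^\star)-\xi$ for \emph{all} $\rho$ with $\norm{\rho-\bar\rho}_2>\eta$ throughout $2(k-1)\ln(k-1)\le d^{\prime}<\dc$. The problematic region is the $k$-stable overlaps, i.e.\ pairs of colourings lying in the same cluster: the rate-function analysis of $\bar f_2$ (\Lem~\ref{Lemma_Dan}) only excludes these for $d^{\prime}\le(2k-1)\ln k-2$ (part (3) plus a convexity argument), and for $(2k-1)\ln k-2<d^{\prime}<\dc$ no entropy/first-moment bound suppresses them --- rare instances with huge clusters dominate the second moment. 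This is precisely why the paper's $\Ztame$ is \emph{defined} through two overlap-based properties: separability (so that only separable overlaps occur among pairs counted by $\Ztame$, letting \Lem~\ref{Lemma_Dan}(1) kill the $s$-stable overlaps with $1\le s<k$), and, in the regime $(2k-1)\ln k-2<d^{\prime}<\dc$, the explicit cluster-size truncation $|\cC(\gnm,\sigma)|\le\Erw\brk{\Zwni(\gnm)}/\exp\brk{\eps n}$, which is what removes the $k$-stable contribution. A local-statistics restriction on single colourings constrains neither the overlap of a pair of tame colourings nor the cluster size, so your far-part bound is unsupported exactly where the proposition is hard (this is the whole point of the ``tweaked'' variable up to $\dc$).

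A second, smaller issue: for the first-moment claim $\Erw\brk{\Ztame}\sim\Erw\brk{\Zwni}$ with the \emph{correct} definition of $\Ztame$, a Chernoff bound in the planted model does not suffice; one needs the planted-model results that the planted colouring is separable \whp\ (\cite[\Lem~3.3]{Danny}) and has small cluster \whp\ (\cite[\Cor~1.1]{Cond}). Those statements are proved for (essentially) perfectly balanced planted colourings, whereas here the colour densities are only $(\w,n)$-balanced with $\w\to\infty$, so dismissing the shift as a harmless $O(\w/\sqrt n)$ perturbation is not enough: the paper transfers the \whp\ statements via an explicit coupling, augmenting $G(n,p',\sigma)$ by $k\lceil\alpha\rceil$ extra vertices to make the planted colouring perfectly balanced (Facts~\ref{Fact_samedist} and~\ref{Fact_samedist_1}), and then controlling the effect of deleting those $O(\sqrt n)$ vertices on cluster sizes by $O(\sqrt n\ln n)=o(n)$ (\Lem s~\ref{Lemma_separable} and~\ref{Lemma_Clustersize}). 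Your proposal would need both ingredients --- the overlap-based separability plus cluster-size truncation in the definition of $\Ztame$, and the balancedness-transfer coupling --- before the reduction to overlaps near $\bar\rho$ (and hence the bound by $(1+o(1))\Erw\brk{\Zwnie(\gnm)}$) goes through.
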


In this section we work with the \Erdos-\Renyi~random graph model $G(n,p)$, which is a random graph on $[n]$ vertices where every possible edge is present with probability $p=d/n$ independently. We further assume from now on that $k$ divides $n$. 

The use of results from \cite{Danny, Cond} is complicated by the fact that we are dealing with $(\w,n)$-balanced $k$-colourings that allow a larger discrepancy between the colour classes than \cite{Danny, Cond}, where balanced colourings are defined such that in each color class only a deviation of at most $\sqrt n$ from the typical value $n/k$ is allowed. To circumvent this problem, we introduce the following:
	
Choose a map $\sigma: [n] \to [k]$ uniformly at random and generate a graph $G(n,p',\sigma)$ on $[n]$ by connecting any two vertices $v,w \in [n]$ such that $\sigma(v) \ne \sigma(w)$ with probability $p'=dk/(n(k-1))$ independently.    

Given $\sigma$ and $G(n,p',\sigma)$, we define
\begin{align*}
\alpha_i=|\sigma^{-1}(i)-n/k| \qquad \text{ for } i\in [k]
\end{align*} 
and let $\alpha=\max_{i\in[k]} \alpha_i$. Thus, by definition $\alpha \le \w\sqrt n$. 
We set $n'=n+k\lceil\alpha\rceil$. Further, we let 
\begin{align*}
\beta_i=|\sigma^{-1}(i)-(n+k\lceil\alpha\rceil)/k| \qquad \text{ for } i\in [k].
\end{align*} 
We then construct a coloured graph $G'_{n',p',\sigma'}$ from $G(n,p',\sigma)$ in the following way: 
\begin{itemize}
	\item Add $k\lceil\alpha\rceil$ vertices to $\gnp$ and denote them by $n+1, n+2,...,n+k\lceil\alpha\rceil$.
	\item Define a colouring $\sigma':[n']\to [k]$ by setting $\sigma'(i)=\sigma(i)$ for $i \in [n]$, $\sigma(i)=1$ for $i \in {n+1,...,n+\beta_1}$ and $\sigma(i)=j$ for $j \in \{2,...,k\}$ and $i \in {n+\beta_{j-1}+1,...,n+\beta_j}$.
	\item Add each possible edge $(i,j)$ with $\sigma'(i)\ne\sigma'(j)$ involving a vertex $i \in \{n+1,...,n+k\lceil\alpha\rceil\}$ with probability $p'=dk/(n(k-1))$.
\end{itemize} 
We call a colouring $\tau:[n]\to [k]$ of a graph $G$ on $[n]$ {\em perfectly balanced} if $|\tau^{-1}(i)|=|\tau^{-1}(j)|$ for all $i,j \in [k]$ and we denote the set of all such perfectly balanced colourings by $\widetilde \cB_k(n)$. 
Then the following holds by construction:
\begin{fact}\label{Fact_samedist}
$G'_{n',p',\sigma'}$ has the same distribution as $G(n', p', \tau)$ conditioned on the event that $\tau: [n']\to k$ is perfectly balanced. 
\end{fact}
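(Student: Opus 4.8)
The statement is purely a matter of unwinding the construction, so the plan is to verify it in three steps. First I would check that $\sigma'\in\widetilde\cB_k(n')$. Since $k\mid n$, every $\alpha_i=\bigl|\,|\sigma^{-1}(i)|-n/k\,\bigr|$ is a non-negative integer, hence $\lceil\alpha\rceil=\alpha$, $k\mid n'$ and $n'/k=n/k+\alpha$. As $\alpha_i\le\alpha$ forces $|\sigma^{-1}(i)|\le n'/k$ for every $i$, the definition of $\beta_i$ gives $\beta_i=n'/k-|\sigma^{-1}(i)|\ge0$, and $\sum_i\beta_i=n'-n=k\alpha$; thus the $k\alpha$ new vertices are allotted to the colour classes exactly so as to bring each one up to size $n'/k$, whence $|\sigma'^{-1}(i)|=|\sigma^{-1}(i)|+\beta_i=n'/k$ for all $i$.

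Second, I would condition on $\sigma$ — equivalently on $\sigma'$, since $\sigma\mapsto\sigma'$ is deterministic — and identify the conditional law of the edges of $G'_{n',p',\sigma'}$. By the definition of $G(n,p',\sigma)$, every pair $\{v,w\}\subseteq[n]$ with $\sigma(v)\ne\sigma(w)$ is present independently with probability $p'$ while no pair with $\sigma(v)=\sigma(w)$ is present; and by the last bullet of the construction the same dichotomy holds, with respect to $\sigma'$, for every pair meeting $\{n+1,\dots,n'\}$. Since $\sigma'$ agrees with $\sigma$ on $[n]$, these two statements together say that, conditionally on $\sigma'$, a pair $\{v,w\}\subseteq[n']$ is present independently with probability $p'$ when $\sigma'(v)\ne\sigma'(w)$ and is absent otherwise — which is precisely the conditional law of $G(n',p',\tau)$ given $\tau=\sigma'$.

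Combining these two steps, the coloured graph $(\sigma',G'_{n',p',\sigma'})$ is produced by sampling a perfectly balanced colouring of $[n']$ and then planting a graph in the usual way, which is the asserted identity. The one point I expect to require a word of care, and would address last, is the marginal law of $\sigma'$: the pushforward of the law of $\sigma$ under $\sigma\mapsto\sigma'$ is not literally uniform on $\widetilde\cB_k(n')$, because $\sigma'$ is pinned to $\sigma$ on $[n]$. As everywhere in this paper, the relabelling-invariant reading makes this harmless: after a uniformly random relabelling of $[n']$ the colouring $\sigma'$ is uniform on $\widetilde\cB_k(n')$, since every perfectly balanced colouring has the same class-size profile $(n'/k,\dots,n'/k)$; equivalently, all that is ever used downstream is that $\sigma'$ is \emph{some} perfectly balanced colouring (Step 1) together with the fact that the second-moment bounds imported from \cite{Cond, Danny} for $G(n',p',\tau\mid\tau\in\widetilde\cB_k(n'))$ depend on the planting only through its class sizes and so transfer verbatim to $G'_{n',p',\sigma'}$. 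I therefore anticipate no genuine technical obstacle; the content lies entirely in the bookkeeping of Steps 1 and 2.
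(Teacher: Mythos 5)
Your proposal is correct and matches the paper's treatment: the paper asserts this Fact ``by construction'', and your three steps (perfect balance of $\sigma'$, the conditional edge law given $\sigma'$, and the symmetry/relabelling remark about the marginal of $\sigma'$) are exactly the bookkeeping that justification compresses. Your closing caveat is well placed, since the Fact is stated loosely (with $n'$ random and $\sigma'$ not literally uniform on $\widetilde\cB_k(n')$), and the downstream uses via \Lem~\ref{Lemma_sep} and \Lem~\ref{Lem_citecond} indeed only need the statement for each fixed perfectly balanced planting, which your symmetry argument supplies.
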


Let $G''_{n,p',\sigma'{|[n]}}$ denote the graph obtained from $G'_{n',p',\sigma'}$ by deleting the vertices $n+1,...,n+k\lceil\alpha\rceil$ and the incident edges. 

\begin{fact}\label{Fact_samedist_1}
	$G''_{n,p',\sigma'{|n}}$ has the same distribution as $G(n, p',\tau)$ conditioned on the event that $\tau$ is $(\w,n)$-balanced.
\end{fact}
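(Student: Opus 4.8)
The plan is to prove Fact~\ref{Fact_samedist_1} by combining Fact~\ref{Fact_samedist} with a symmetry argument about which subset of vertices we keep. First I would observe that, by construction, the pair $(G'_{n',p',\sigma'}, \sigma')$ is produced by: (i) sampling a uniformly random $\sigma:[n]\to[k]$, (ii) reading off the colour class sizes $|\sigma^{-1}(i)|$, padding each class up to the common value $n'/k=(n+k\lceil\alpha\rceil)/k$ by appending fresh vertices, and (iii) inserting each bichromatic edge independently with probability $p'$. By Fact~\ref{Fact_samedist}, this coincides in distribution with $(G(n',p',\tau),\tau)$ conditioned on $\tau$ being perfectly balanced. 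Now $G''_{n,p',\sigma'|[n]}$ is obtained by deleting exactly the $k\lceil\alpha\rceil$ padding vertices and restricting $\sigma'$ to $[n]$, which is the original $\sigma$. So the claim amounts to: if we take $G(n',p',\tau)$ with $\tau$ uniformly random subject to being perfectly balanced, delete a specified set $S$ of $k\lceil\alpha\rceil$ vertices chosen so that $|S\cap\tau^{-1}(i)|$ equals the deficiency $\beta_i$ of colour $i$, and keep the induced subgraph on $[n']\setminus S$ with the restricted colouring, then the result is distributed as $G(n,p',\tau')$ with $\tau'$ conditioned to be $(\w,n)$-balanced.

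The key steps, in order, are as follows. Step one: record precisely the joint law of $(\sigma, G(n,p',\sigma), \text{padding vertices}, \sigma')$ and note that conditioned on the colour class profile of $\sigma$, the edges among $[n]$ and the edges incident to padding vertices are mutually independent, each bichromatic pair present with probability $p'$. Step two: invoke Fact~\ref{Fact_samedist} to replace $(G'_{n',p',\sigma'},\sigma')$ by $(G(n',p',\tau),\tau)$ with $\tau$ conditioned perfectly balanced; here one must check that the induced distribution of $\tau$ restricted to $[n]$ (after we agree which $k\lceil\alpha\rceil$ vertices are the "pad") is exactly uniform among $(\w,n)$-balanced colourings of $[n]$. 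This is the combinatorial heart: among perfectly balanced colourings $\tau:[n']\to[k]$, grouping by the restriction to the last $k\lceil\alpha\rceil$ coordinates and by the profile on $[n]$, the number of completions is the same whenever the $[n]$-profile is $(\w,n)$-balanced, because the deficiency vector $\beta$ is then forced and the number of ways to colour the pad with that fixed profile is a fixed multinomial coefficient independent of which balanced profile on $[n]$ we started from. Step three: conclude that deleting the pad and its incident edges leaves precisely $G(n,p',\tau')$ with $\tau'$ the restriction, and that since edges within $[n]$ were independent of the pad edges, the conditional edge law is still "each bichromatic pair present independently with probability $p'$", i.e.\ exactly $G(n,p',\tau')$.

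The main obstacle I expect is Step two: verifying that the restriction map from perfectly balanced colourings on $[n']$ to their values on $[n]$ pushes the uniform measure to the uniform measure on $(\w,n)$-balanced colourings of $[n]$. The subtlety is that $n'=n+k\lceil\alpha\rceil$ itself depends on $\sigma$ through $\alpha=\alpha(\sigma)$, so one is really comparing measures across different values of $n'$; one must condition on the event $\{\alpha(\sigma)=a\}$ (equivalently fix $n'$) and argue the identity on that slice, then note that both the left-hand construction and the description "$G(n,p',\tau)$ conditioned on $\tau$ being $(\w,n)$-balanced" decompose compatibly over this conditioning. A clean way to handle this is to carry $\lceil\alpha\rceil$ (hence $n'$) as an external parameter and to observe that the number of padding-colourings realizing a given deficiency profile $\beta$ is $\binom{k\lceil\alpha\rceil}{\beta_1,\ldots,\beta_k}$, a quantity that is determined by $\alpha$ alone together with the (fixed, once $\alpha$ is fixed) requirement $\beta_i=\lceil\alpha\rceil + n/k - |\sigma^{-1}(i)|$; hence it is the \emph{same} for every $(\w,n)$-balanced profile with the prescribed $\alpha$, giving the desired uniformity. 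Everything else is bookkeeping about independence of edge sets, which follows directly from the product structure of $G(n,p')$-type models.
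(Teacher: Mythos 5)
There is a genuine gap, and it sits exactly where you predicted: Step two. The restriction of a uniformly random perfectly balanced colouring of $[n']$ to $[n]$ is \emph{not} uniform among $(\w,n)$-balanced colourings of $[n]$. The number of ways to colour the $k\lceil\alpha\rceil$ pad vertices so as to complete a given profile on $[n]$ to a perfectly balanced one is $\binom{k\lceil\alpha\rceil}{\beta_1,\ldots,\beta_k}$, and this does depend on the full deficiency vector $\beta$, not only on $\alpha$: fixing $\alpha$ fixes the maximal deviation, not the vector of deviations. For instance (with $k=3$, $n/k=3$, so $n'=n+6$), the profiles with colour class sizes $(n/k+2,\,n/k,\,n/k-2)$ and $(n/k+2,\,n/k-1,\,n/k-1)$ both have $\alpha=2$, but their completion counts are $\binom{6}{0,2,4}=15$ and $\binom{6}{0,3,3}=20$. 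So the pushforward measure weights profiles unequally; moreover it is not even supported on $(\w,n)$-balanced colourings, since a restriction can have deviations as large as $(k-1)\lceil\alpha\rceil$. A further structural problem is that your procedure "delete a set $S$ with $|S\cap\tau^{-1}(i)|=\beta_i$" is not a well-defined operation on $(G(n',p',\tau),\tau)$, because $\beta$ is defined through the restriction you are trying to produce; in the actual construction the pad is the fixed vertex set $\{n+1,\ldots,n'\}$ with a deterministic block colouring, so trying to re-import it through Fact~\ref{Fact_samedist} loses exactly the information that identifies it.

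The fact does not need this detour at all, and the paper does not derive it from Fact~\ref{Fact_samedist}; both facts are read off directly from the construction. By construction, $G'_{n',p',\sigma'}$ is obtained from $G(n,p',\sigma)$ by adding the vertices $n+1,\ldots,n+k\lceil\alpha\rceil$ and only edges incident to them, each present independently with probability $p'$. Hence deleting these vertices and their incident edges returns \emph{exactly} the graph $G(n,p',\sigma)$, and $\sigma'|_{[n]}=\sigma$ by definition. Since $\sigma$ is a uniformly random $(\w,n)$-balanced assignment (this is the reading under which the paper's remark $\alpha\le\w\sqrt n$ holds), the pair $\bigl(G''_{n,p',\sigma'|_{[n]}},\sigma'|_{[n]}\bigr)$ coincides with $(G(n,p',\sigma),\sigma)$ not merely in distribution but pointwise, which is precisely the statement of Fact~\ref{Fact_samedist_1}. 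If you want to keep your framework, you must drop the uniformity claim and argue on each slice $\{\alpha(\sigma)=a\}$ that adding and then removing the pad is the identity on the edge set within $[n]$ --- at which point you have reproduced the direct argument and Fact~\ref{Fact_samedist} plays no role.
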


To proceed, we adopt the following notation from \cite{Danny}: Let $\rho\in\cB_k$ be called {\em $s$-stable} if it has precisely $s$ entries bigger than $0.51/k$. Further, let $\bar \cB_k$ be the set of all $\rho\in\cB_k$ such that
$$\sum_{j=1}^k\rho_{ij}=\sum_{j=1}^k\rho_{ji}=1/k\quad\mbox{for all }i\in[k].$$

Then any $\rho\in\bar\cB_k$ is $s$-stable for some $s\in\{0,1,\ldots,k\}$.
In addition, let $\kappa=\ln^{20}k/k$ and let us call $\rho\in\cB_k$ {\em separable} if $k\rho_{ij}\not\in(0.51,1-\kappa)$ for all $i,j\in[k]$.
A $k$-colouring $\sigma$ of a graph $G$ on $[n]$ is called {\em separable}
if for any other $k$-colouring $\tau$ of $G$ the overlap matrix $\rho(\sigma,\tau)$ is separable. We have the following result: 

\begin{lemma}	\label{Lemma_separable}
	Let $s \in \Swn$. There is $k_0>0$ such that for all $k>k_0$ 
	and all $d'$ such that $2(k-1)\ln(k-1)\leq d'\leq(2k-1)\ln k$ the following is true.
	Let $\Ztame(\gnm)$ denote the number of $(\omega,n)$-balanced $k$-colourings of $\gnm$ that fail to be separable.
	Then
	$\Erw[\Ztame(\gnm)]=o(\Erw[\Zwni(\gnm)])$.
\end{lemma}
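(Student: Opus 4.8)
The plan is to condition on $\sigma$ being a colouring --- thereby passing to the planted model --- and then to import the rigidity (``frozen variables'') estimate that underlies the truncated second moment argument of \cite{Danny} and \cite{Cond}. By the linearity of expectation,
$$
\Erw\brk{\Ztame(\gnm)}=\sum_{\sigma}\pr\brk{\sigma\text{ is a colouring of }\gnm}\cdot\pr\brk{\sigma\text{ is not separable}\mid\sigma\text{ is a colouring of }\gnm},
$$
where $\sigma$ ranges over the $(\w,n)$-balanced maps $[n]\to[k]$. The prefactor sum equals $\Erw\brk{\Zw(\gnm)}$, which for fixed $\w,\nu$ and $s$ is of the same order as $\Erw\brk{\Zwni(\gnm)}$ by \Lem~\ref{Lem_fm_shift} (as $\Zw$ is a sum of $|\Swn|=O(1)$ comparable contributions). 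Hence it suffices to prove
$$
\max\cbc{\pr\brk{\sigma\text{ is not separable}\mid\sigma\text{ is a colouring of }\gnm}:\sigma\text{ is }(\w,n)\text{-balanced}}=o(1).
$$
Conditioning $\gnm$ on $\sigma$ being a colouring yields the planted graph, which after the standard translation between $\cG(n,m)$, $G(n,m)$ and $G(n,p')$ with $p'=dk/(n(k-1))$ (Fact~\ref{Fact_doubleEdge} together with contiguity) is distributed as $G(n,p',\sigma)$.

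A union bound over pairs of colourings does not suffice here: bounding $\Erw\brk{\Ztame(\gnm)}\le\sum_{\rho}\Erw\brk{\Zrho^{(2)}(\gnm)}$ over non-separable $\rho$ with $(\w,n)$-balanced row marginals, and using $\Erw\brk{\Zrho^{(2)}(\gnm)}=\exp\brk{nf_2(\rho)+O(\ln n)}$ from Fact~\ref{Fact_varZ}, only gives $\Erw\brk{\Ztame(\gnm)}\le n^{O(1)}\exp\brk{n\sup f_2}$; but there are non-separable $\rho$ --- close to $\bar\rho$ except for a single entry with $k\rho_{ij}$ inside $(0.51,1-\kappa)$ --- with $f_2(\rho)$ as large as $2f_1(\rho^\star)-O(\ln k/k)$, so for $d^{\prime}$ near $2(k-1)\ln(k-1)$ this bound exceeds $\Erw\brk{Z_k(\gnm)}$. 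Instead I would invoke the structural input of \cite{Danny} (see also \cite{Cond}): for $k\ge k_0$ and $2(k-1)\ln(k-1)\le d^{\prime}\le(2k-1)\ln k$, in the planted model a positive fraction of the vertices of the planted colouring is frozen, forcing \emph{every} colouring $\tau$ of the planted graph to overlap the planted colouring either essentially trivially or, up to a permutation of the colours, essentially as the identity; in particular the overlap matrix $\rho(\sigma,\tau)$ has no entry with $k\rho_{ij}\in(0.51,1-\kappa)$. This is exactly what makes the truncation to separable colourings in \cite{Danny} asymptotically lossless, and it is the statement I would cite.

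It remains to bridge the gap between the \emph{perfectly balanced} model in which \cite{Danny} and \cite{Cond} are phrased and the \emph{$(\w,n)$-balanced} planted model; this is the purpose of the coupling set up just before the lemma. By Facts~\ref{Fact_samedist} and~\ref{Fact_samedist_1}, for an $(\w,n)$-balanced $\sigma$ the graph $G(n,p',\sigma)$ arises from $G'_{n',p',\sigma'}$ --- which by Fact~\ref{Fact_samedist} has the law of $G(n',p',\tau)$ conditioned on $\tau$ being perfectly balanced, on $n'=n+k\lceil\alpha\rceil$ vertices with $n'-n=O(k\w\sqrt n)=o(n)$ --- by deleting the $k\lceil\alpha\rceil$ added vertices and their incident edges. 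The frozen-variables estimates of \cite{Danny} are local (they concern bounded-depth neighbourhoods of the planted graph and the frozen structure they force), hence insensitive both to relaxing ``perfectly balanced'' to ``every colour class is within $\w\sqrt n$ of $n/k$'' and to adding or deleting $o(n)$ vertices; thus the planted colouring of $G'_{n',p',\sigma'}$ is separable with probability $1-o(1)$. Since deleting the $o(n)$ added vertices and passing to the induced subgraph perturbs each entry of an overlap matrix by only $O((n'-n)/n)=o(1)$, while ``separable'' is an open condition (bad entries must lie strictly inside $(0.51,1-\kappa)$), separability of the planted colouring survives the restriction to $[n]$; together with Fact~\ref{Fact_samedist_1} this gives the displayed bound uniformly over $(\w,n)$-balanced $\sigma$, and the lemma follows. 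The only genuinely hard ingredient is the rigidity estimate, imported verbatim from \cite{Danny,Cond}; within the present paper all that is needed is the two model reductions above and the routine verification that widening the balancedness window by the factor $\w$ --- and adding the corresponding $o(n)$ padding vertices --- affects none of those estimates.
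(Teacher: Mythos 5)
Your proposal is correct and follows essentially the same route as the paper: pass to the planted model, pad the $(\w,n)$-balanced planted colouring to a perfectly balanced one on $n'=n+k\lceil\alpha\rceil$ vertices (Facts~\ref{Fact_samedist} and~\ref{Fact_samedist_1}), import the separability statement of \cite{Danny} (the paper's \Lem~\ref{Lemma_sep}), and transfer back to $[n]$ at the cost of slightly relaxing the separability constant, exactly as the paper does with $\kappa'=\ln^{21}k/k$. The Bayes/first-moment reduction to a uniform $o(1)$ bound on the conditional probability, which you spell out, is left implicit in the paper's proof but is the same argument.
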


To prove this lemma, we combine Fact~\ref{Fact_samedist} with \cite[\Lem~3.3]{Danny}. This yields the following\footnote{As a matter of fact, \Lem~3.2 in \cite{Danny} also holds for densities $2(k-1)\ln(k-1)\leq d^{\prime} \le 2(k-1)\ln k-2$, as all steps in the proof are also valid in this regime.}.

\begin{lemma}[\cite{Danny}]	\label{Lemma_sep}
There is $k_0>0$ such that for all $k\ge k_0$ and all $d^{\prime} $ with $2(k-1)\ln(k-1)\leq d^{\prime} \leq(2k-1)\ln k$ each $\tau \in \widetilde \cB_k(n')$ is separable in $G'_{n', p', \tau}$ \whp.
\end{lemma}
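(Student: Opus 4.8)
The plan is to deduce \Lem~\ref{Lemma_sep} from the separability analysis of the perfectly balanced planted $k$-colouring model carried out in \cite{Danny}, using the distributional identity of Fact~\ref{Fact_samedist} to identify the graphs $G^{\prime}_{n^{\prime},p^{\prime},\tau}$ with the model studied there. First I would recall the definitions at play: an overlap matrix $\rho\in\cB_k$ is \emph{separable} precisely when $k\rho_{ij}\notin(0.51,1-\kappa)$ for all $i,j\in[k]$, with $\kappa=\ln^{20}k/k$, and a colouring is separable when its overlap with every other colouring of the same graph is a separable matrix. Then I would invoke \cite[\Lem~3.3]{Danny} (which rests on \cite[\Lem~3.2]{Danny}): for $k$ above a suitable constant and $d^{\prime}$ in the relevant window, if one plants a perfectly balanced colouring $\tau$ and then inserts each bichromatic edge independently with probability $p^{\prime}=dk/(n(k-1))$, then \whp\ the planted colouring is separable, i.e.\ no $k$-colouring $\tau^{\prime}$ of the resulting graph produces an overlap entry $\rho_{ij}(\tau,\tau^{\prime})$ with $k\rho_{ij}\in(0.51,1-\kappa)$.

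The transfer step is short. By construction $\sigma^{\prime}$ is a perfectly balanced colouring of $[n^{\prime}]$, and every $\tau\in\widetilde\cB_k(n^{\prime})$ differs from $\sigma^{\prime}$ only by a permutation of the vertices together with a permutation of the colours; such relabellings leave both the law of the random bichromatic edges and the property of being separable invariant, so it suffices to treat $\sigma^{\prime}$. Now Fact~\ref{Fact_samedist} asserts that $G^{\prime}_{n^{\prime},p^{\prime},\sigma^{\prime}}$ has exactly the law of $G(n^{\prime},p^{\prime},\tau)$ conditioned on $\tau$ being perfectly balanced, which is precisely the planted model analysed in \cite{Danny}. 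Hence \cite[\Lem~3.3]{Danny} applies verbatim and yields that $\sigma^{\prime}$ — equivalently every $\tau\in\widetilde\cB_k(n^{\prime})$ — is separable in $G^{\prime}_{n^{\prime},p^{\prime},\tau}$ \whp.

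The one genuinely non-mechanical point — the step I expect to be the main obstacle — is that the degree range we need, $2(k-1)\ln(k-1)\le d^{\prime}\le(2k-1)\ln k$, reaches slightly past the window for which \cite{Danny} phrases its separability result. As flagged in the footnote above, I would re-examine the proof of \cite[\Lem~3.2]{Danny}: it bounds the expected number of colourings $\tau^{\prime}$ of the planted graph whose overlap with the planted colouring has an entry with $k\rho_{ij}\in(0.51,1-\kappa)$ by a union over $n^{O(1)}$ overlap types of first-moment terms, each of order $\exp(-\Omega(n))$, the exponential rate being the difference between an entropy contribution and an energy contribution coming from the (asymptotically Poisson) number of bichromatic edges. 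Using the asymptotic expansions in $k$ already developed in \cite{Cond,Danny}, I would check that this rate stays bounded away from $0$ throughout the stated range once $k_0$ is taken sufficiently large — the slack available at the colourability threshold being $\Omega(1)$ per colour even after the $(k-1)^2$ normalisation, so that raising the ceiling on $d^{\prime}$ by the bounded amount needed here cannot flip its sign. This re-running of \cite{Danny}'s first-moment estimate with the enlarged ceiling is the only part of the argument that is not pure bookkeeping; the remainder is just the change of probability space supplied by Fact~\ref{Fact_samedist}.
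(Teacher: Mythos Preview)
Your proposal is correct and matches the paper's approach exactly: the paper does not give a standalone proof of this lemma but simply states that it follows by combining Fact~\ref{Fact_samedist} with \cite[\Lem~3.3]{Danny}, together with the footnoted observation that the relevant first-moment argument from \cite{Danny} remains valid across the extended density window. Your write-up merely spells out these two ingredients in more detail than the paper does.
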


\begin{proof}[Proof of \Lem~\ref{Lemma_separable}]
Choose a map $\sigma: [n] \to [k]$ uniformly at random and generate a graph $G(n,p',\sigma)$ on $[n]$ by connecting any two vertices $v,w \in [n]$ such that $\sigma(v) \ne \sigma(w)$ with probability $p'$ independently. Construct  $G'_{n',p',\sigma'}$ from $G(n,p',\sigma)$ in the way defined above. Then $\sigma' \in \widetilde\cB_k(n)$.  By \Lem~\ref{Lemma_sep}, $\sigma'$ is separable in $G'_{n',p',\sigma'}$ \whp.
Thus, $\sigma$ is separable in $G''_{n,p',\sigma'{|n}}$ if we define separability using $\kappa'=\frac{\ln^{21} k}{k}$. By choosing $k_0$ large enough and applying Fact~\ref{Fact_samedist_1}, the assertion follows.  	
\end{proof}

For the next ingredient to the proof of \Prop~\ref{Prop_AcoVil}, we need the following definition.
For a graph $G$ on $[n]$ and a $k$-colouring $\sigma$ of $G$, we let
$\cC(G,\sigma)$ be the set of all $\tau\in\cB_k$ that are $k$-colourings of $G$
such that $\rho(\sigma,\tau)$ is $k$-stable.

\begin{lemma} \label{Lemma_Clustersize}
	Let $s \in \Swn$. There is $k_0>0$ such that for all $k>k_0$ 
	and all $d^{\prime} $ such that $(2k-1)\ln k-2\leq d^{\prime} \leq\dc$ the following is true.
	There exists an $\eps>0$ such that if $\Ztame(\gnm)$ denotes the number of $(\omega,n)$-balanced $k$-colourings $\sigma$ of $\gnm$ satisfying
	$|\cC(\gnm,\sigma)|>\linebreak \Erw\brk{\Zwni(\gnm)}/\exp\brk{\eps n}$ , then
	$\Erw\brk{\Ztame(\gnm)}=o\br{\Erw\brk{\Zwni(\gnm)}}$.
\end{lemma}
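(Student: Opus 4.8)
The plan is to follow the route of \Lem~\ref{Lemma_separable}: transfer the statement to the perfectly balanced planted model via the padding construction of this section together with Facts~\ref{Fact_samedist} and~\ref{Fact_samedist_1}, import a cluster-size estimate from \cite{Cond}, and convert the resulting per-colouring bound into a bound on $\Erw\brk{\Ztame(\gnm)}$ by Markov's inequality. The one new feature compared with \Lem~\ref{Lemma_separable} is that we now need a \emph{quantitative} control on the number of colourings $\tau$ whose overlap with a fixed $\sigma$ is $k$-stable, rather than the qualitative separability property; but this is precisely what the condensation analysis of \cite{Cond} supplies in the regime $d^{\prime}<\dc$.

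First I would write the expectation as a sum over planted colourings. For fixed $(\w,n)$-balanced $\sigma\colon[n]\to[k]$, the random graph $\gnm$ conditioned on $\sigma$ being a $k$-colouring is — up to the standard coupling between $\gnm$ and $\gnp$, and after passing to exactly balanced colourings of $G'_{n',p',\sigma'}$ through the construction $\gnp\rightsquigarrow G'_{n',p',\sigma'}\rightsquigarrow G''_{n,p',\sigma'}$ together with Facts~\ref{Fact_samedist} and~\ref{Fact_samedist_1} — distributed as $G(n,p',\sigma)$. Hence, writing $q(\sigma)=\pr_{G(n,p',\sigma)}\brk{|\cC(G(n,p',\sigma),\sigma)|>\Erw\brk{\Zwni(\gnm)}\exp\brk{-\eps n}}$, one gets
\[
\Erw\brk{\Ztame(\gnm)}=\sum_{\sigma}\pr\brk{\sigma\text{ is a }k\text{-colouring of }\gnm}\,q(\sigma),
\]
the sum ranging over all $(\w,n)$-balanced $\sigma$. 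Since $\sum_{\sigma}\pr\brk{\sigma\text{ is a }k\text{-colouring of }\gnm}=\Erw\brk{\Zw(\gnm)}$ and $\Erw\brk{\Zw(\gnm)}=\Theta_\nu\br{\Erw\brk{\Zwni(\gnm)}}$ by \Prop~\ref{Prop_first_mom} and \Lem~\ref{Lem_fm_shift}, it suffices to prove that $q(\sigma)=o(1)$ uniformly over $(\w,n)$-balanced $\sigma$.

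The key input is the cluster-size bound, which I would cite from \cite{Cond}: for $k\ge k_0$ and $d^{\prime}<\dc$ there is $\eps_1=\eps_1(d^{\prime},k)>0$ such that, for every $(\w,n)$-balanced $\sigma$,
\[
\Erw_{G(n,p',\sigma)}\brk{\,|\cC(G(n,p',\sigma),\sigma)|\,}\le \Erw\brk{\Zwni(\gnm)}\exp\brk{-\eps_1 n}.
\]
In substance this is the assertion that the ``complexity'' functional whose vanishing defines $\dc$ is strictly positive below $\dc$; it is proved in \cite{Cond} for perfectly balanced colourings, and — exactly as in the footnote to \Lem~\ref{Lemma_sep} — one checks that it survives the passage to $(\w,n)$-balanced colourings through the auxiliary construction with only a subexponential loss, and that none of the steps requires $d^{\prime}$ below $(2k-1)\ln k-2$. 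Granting this, set $\eps=\eps_1/2$: Markov's inequality gives $q(\sigma)\le\exp\brk{(\eps-\eps_1)n}=\exp\brk{-\eps_1 n/2}$ uniformly in $\sigma$, and substituting into the display above yields $\Erw\brk{\Ztame(\gnm)}\le\exp\brk{-\eps_1 n/2}\,\Erw\brk{\Zw(\gnm)}=o\br{\Erw\brk{\Zwni(\gnm)}}$, as required.

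The hard part is the cluster-size bound itself; everything else above is bookkeeping. It is not a consequence of the crude ``frozen core'' picture alone — that would only give $|\cC|\le\exp\brk{(1-\Omega(1))n\ln k}$, which for fixed $k$ exceeds $\Erw\brk{\Zwni(\gnm)}$ — and instead rests on the sharp optimisation over $k$-stable overlap matrices carried out in \cite{Cond} and the comparison of its value with $2f_1(\rho^\star)$. Thus the genuine work is (i) to verify that this optimisation and its conclusion are unaffected by enlarging the admissible discrepancy between colour classes from $\sqrt n$ to $\w\sqrt n$, which is what the construction $G'_{n',p',\sigma'}$ of this section is designed to handle (precisely as in \Lem~\ref{Lemma_separable}), and (ii) to confirm that the range $(2k-1)\ln k-2\le d^{\prime}\le\dc$ is covered by the arguments of \cite{Cond, Danny}. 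Both are routine adaptations of results already present in those references.
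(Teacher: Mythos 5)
Your reduction of the lemma to a statement about the planted model is fine and matches the paper's route in spirit (the paper does the same transfer via Facts~\ref{Fact_samedist} and~\ref{Fact_samedist_1}). The genuine gap is your ``key input'': you assert an \emph{expected} cluster-size bound $\Erw\brk{|\cC(G(n,p',\sigma),\sigma)|}\le\Erw\brk{\Zwni(\gnm)}\exp\brk{-\eps_1 n}$ in the planted model and then apply Markov. This is not what \cite{Cond} provides, and in the regime $(2k-1)\ln k-2\le d^{\prime}\le\dc$ it is exactly the quantity that cannot be controlled: by the planting correspondence, the planted-model expectation of $|\cC|$ equals (up to subexponential factors) the contribution of $k$-stable overlap matrices to $\Erw\brk{\Zwni(\gnm)^2}/\Erw\brk{\Zwni(\gnm)}$, i.e.\ its exponential order is $\max\{\bar f_2(\rho):\rho\ k\mbox{-stable}\}-f_1(\rho^\star)$. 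The bound you need is therefore equivalent to $\max_{k\mathrm{-stable}}\bar f_2<2f_1(\rho^\star)-\eps_1$, which is available (via \Lem~\ref{Lemma_Dan}(3) and convexity) only up to $d^{\prime}=(2k-1)\ln k-2$; beyond that point the second moment is dominated by rare graphs with huge clusters, which is precisely why \cite{Danny,Cond} and Case~2 of the proof of \Prop~\ref{Prop_AcoVil} introduce the cluster-size truncation in the first place. If your expected-value bound held, the truncation defining $\Ztame$ would be superfluous and the plain second moment would work up to $\dc$, which is not the case. What \cite[\Cor~1.1]{Cond} gives is the \emph{with-high-probability} statement that the planted colouring's cluster is at most $\Erw\brk{\Zwni}/\exp\brk{\eps n}$, equivalently that the expected number of colourings with a large cluster is $o(\Erw\brk{\Zwni})$ — and that probability statement is what must be transferred, not derived from a first moment of $|\cC|$.

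The second, smaller omission is the quantitative bridge between the padded graph and the original one. The paper's proof needs that passing from $G'_{n',p',\sigma'}$ back to $G''_{n,p',\sigma'_{|[n]}}$ (deleting the $O(\sqrt n)$ auxiliary vertices) changes $\ln|\cC|$ by only $O(\sqrt n\ln n)=o(n)$, which follows from the estimate in \cite[Section~6]{Cond} that deleting a single vertex changes $\ln|\cC|$ by $O(\ln n)$; this is what lets the threshold $\Erw\brk{\Zwni}/\exp\brk{\eps n}$ survive the construction. Your item~(i) acknowledges that something must be checked here but offers no mechanism; without this step even the correct w.h.p.\ input for perfectly balanced colourings does not transfer to $(\w,n)$-balanced ones. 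So the skeleton (planting transfer plus a cluster-size result from \cite{Cond}) is right, but the specific input you rely on is false in part of the stated range, and the Markov step should be replaced by the w.h.p.\ statement together with the vertex-deletion estimate.
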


To prove this lemma, we combine~\ref{Fact_samedist} with \cite[\Cor~1.1]{Cond} and obtain the following:

\begin{lemma}[{\cite{Cond}}]\label{Lem_citecond}
	Let $s \in \Swn$. There is $k_0>0$ such that for all $k>k_0$ 
	and all $d^{\prime} $ such that $(2k-1)\ln k-2\leq d^{\prime} \leq\dc$ the following is true.
	Let $\tau \in \widetilde \cB_k(n')$ be a perfectly balanced colour assignment. Then there exists $\eps>0$ such that if $\Ztame(G'_{n',p',\tau})$ denotes the number of $(\omega,n)$-balanced $k$-colourings $\tau$ of $G'_{n',p',\tau}$ satisfying
	$|\cC(G'_{n',p',\tau},\tau)|>\Erw\brk{\Zwni(G'_{n',p',\tau})}/\exp\brk{\eps n}$, then
	$\Erw\brk{\Ztame(G'_{n', p', \tau})}=o\br{\Erw\brk{\Zwni(G'_{n',p',\tau})}}$.
\end{lemma}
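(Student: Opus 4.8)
The plan is to transcribe Corollary~1.1 of~\cite{Cond} into the present notation; no genuinely new argument is needed.
First I would recall that corollary in the form we use: for $k\ge k_0$ and $(2k-1)\ln k-2\le d^{\prime}\le\dc$, if $\tau$ is a perfectly balanced colour assignment of $[n']$ and $G(n',p',\tau)$ denotes the planted random graph obtained by joining each $\tau$-bichromatic pair of vertices with probability $p'$ independently, then there is an $\eps=\eps(k,d^{\prime})>0$ such that the expected number of balanced $k$-colourings $\sigma$ of $G(n',p',\tau)$ with $|\cC(G(n',p',\tau),\sigma)|>\Erw\brk{Z_k(G(n',p',\tau))}/\exp\brk{\eps n'}$ is $o\br{\Erw\brk{Z_k(G(n',p',\tau))}}$. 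Here I would check that the density window assumed in \Lem~\ref{Lem_citecond} lies inside the regime treated in~\cite{Cond} and that $\tau\in\widetilde\cB_k(n')$ is exactly the perfect-balance hypothesis used there.

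Two points then remain to be reconciled: the graph model and the balance notion. Since $\tau$ is perfectly balanced, $G'_{n',p',\tau}$ is nothing but the planted random graph $G(n',p',\tau)$; more precisely this identification is what Fact~\ref{Fact_samedist} records, in the degenerate case where the conditioning on perfect balance is vacuous. As for the balance notion, Facts~\ref{Fact_samedist} and~\ref{Fact_samedist_1} together with the padding construction are designed exactly so that $(\omega,n)$-balanced colourings on $[n]$ match perfectly balanced colourings on $[n']$; hence the random variable $\Ztame(G'_{n',p',\tau})$ counting $(\omega,n)$-balanced colourings with an oversized cluster is (up to this translation) the quantity bounded in Corollary~1.1, and $\Erw\brk{\Zwni(G'_{n',p',\tau})}$ is comparable to $\Erw\brk{Z_k(G(n',p',\tau))}$.

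The main --- and essentially only --- obstacle is the bookkeeping between the $n$-vertex and $n'$-vertex normalisations. Because $\alpha\le\w\sqrt n$ one has $n'=n+O(\w\sqrt n)=(1+o(1))n$, so $\Erw\brk{Z_k(G(n',p',\tau))}=\exp\brk{O(\w\sqrt n)}\,\Erw\brk{\Zwni(G'_{n',p',\tau})}$ and the thresholds $\exp\brk{-\eps n'}$ and $\exp\brk{-\eps n}$ differ by a factor $\exp\brk{o(n)}$; both discrepancies are absorbed by replacing $\eps$ with a slightly smaller positive constant. The only other routine checks are that the notions of ``$k$-stable'' overlap and of balance in~\cite{Cond} coincide with those fixed here and, if Corollary~1.1 is phrased as a high-probability statement about the planted colouring rather than as a first-moment bound, a one-line conversion via linearity of expectation and colour-relabelling symmetry; I would dispatch these by quoting the construction and Facts~\ref{Fact_samedist}--\ref{Fact_samedist_1}, rather than re-running any part of~\cite{Cond}.
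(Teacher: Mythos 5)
Your proposal matches the paper's treatment: the paper gives no independent argument for \Lem~\ref{Lem_citecond}, but obtains it precisely by importing \cite[\Cor~1.1]{Cond} through the distributional identification of Fact~\ref{Fact_samedist}, which is exactly your route. Your additional bookkeeping (the $n$ versus $n'=n+O(\w\sqrt n)$ normalisation and absorbing the $\exp[o(n)]$ discrepancy by shrinking $\eps$) is correct and only makes explicit what the paper leaves implicit.
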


\begin{proof}[Proof of \Lem~\ref{Lemma_Clustersize}]
	Choose a map $\sigma: [n] \to [k]$ uniformly at random and generate a graph $G(n,p',\sigma)$ on $[n]$ by connecting any two vertices $v,w \in [n]$ such that $\sigma(v) \ne \sigma(w)$ with probability $p'$ independently. Construct  $G'_{n',p',\sigma'}$ from $G(n,p',\sigma)$ in the way defined above. To construct $G''_{n,p',\sigma'_{|[n]}}$ from $G'_{n',p',\sigma'}$, we have to delete $O(\sqrt n)$ many vertices. By \cite[Section 6]{Cond}, for each of these vertices $v$ we can bound the logarithm of the number of colourings that emerge when deleting $v$ by $O(\ln n)$. Thus, 
	\begin{align}\label{eq_cluster_bigger}
	\ln |\cC(G''_{n,p',\sigma'_{|[n]}},\sigma'_{|[n]})|=\ln |\cC(G'_{n',p',\sigma'},\sigma')|+O(\sqrt n \ln n)=\ln |\cC(G'_{n',p',\sigma'},\sigma')|+o(n).
	\end{align}	
	Then \Lem~\ref{Lemma_Clustersize} follows by combining \Lem~\ref{Lem_citecond} with \eqref{eq_cluster_bigger} and Fact~\ref{Fact_samedist_1}.
\end{proof}

To complete the proof, we have to analyse the function $f_2$ defined in \eqref{eq_def_f_2}, as we know from \eqref{eq_erw_Z_rho} that
\begin{align*}
\Erw\brk{\Zrho^{(2)}(\gnm)}=\exp\brk{nf_2(\rho) +O(\ln n)}.
\end{align*}
The following lemma shows that we can confine ourselves to the investigation  of the function $\bar f_2$ defined in \eqref{eq_f}. 

\begin{lemma}
	Let $\lim_{n\to\infty}(\rho_n)_n=\rho_0$. Then $\lim_{n\to \infty} \ln \Erw\brk{Z^{(2)}_{k,\rho_n}(\gnm)}\le \bar f_2(\rho_0)$. 
\end{lemma}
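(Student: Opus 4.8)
The plan is to read the statement with the normalisation $\tfrac1n\ln$ (by~\eqref{eq_erw_Z_rho} the left-hand side grows linearly in $n$, so this is the only sensible reading) and to reduce it to an elementary Cauchy--Schwarz comparison between $f_2$ and $\bar f_2$. First I would invoke~\eqref{eq_erw_Z_rho}: for every $n$ we have $\Erw\brk{Z^{(2)}_{k,\rho_n}(\gnm)}=\exp\brk{n f_2(\rho_n)+O(\ln n)}$, so that $\tfrac1n\ln\Erw\brk{Z^{(2)}_{k,\rho_n}(\gnm)}=f_2(\rho_n)+o(1)$. Since $\cH$ is continuous on the simplex and $\ln$ is continuous on $(0,\infty)$, $f_2$ is continuous at $\rho_0$ as soon as $1-\normA^2-\normB^2+\normC^2>0$ there; combined with $\rho_n\to\rho_0$ and $d=d(n)\to d'$, this yields $\tfrac1n\ln\Erw\brk{Z^{(2)}_{k,\rho_n}(\gnm)}\to f_2(\rho_0)$. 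If instead that argument vanishes at $\rho_0$, then $f_2(\rho_0)=-\infty$, $\Erw\brk{Z^{(2)}_{k,\rho_n}(\gnm)}$ is subexponential, and the claimed bound is vacuous; so I may assume positivity.

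The core of the argument is the pointwise inequality $f_2(\rho)\le\bar f_2(\rho)$. Since $\rho_{\nix\star}$ and $\rho_{\star\nix}$ are probability distributions on $[k]$, Cauchy--Schwarz gives $\normA^2=\sum_i\rho_{i\star}^2\ge\tfrac1k$ and, symmetrically, $\normB^2\ge\tfrac1k$. Hence $1-\normA^2-\normB^2+\normC^2\le 1-\tfrac2k+\normC^2$, and because $d>0$ and $t\mapsto\ln t$ is increasing while the entropy terms of $f_2$ and $\bar f_2$ coincide, we conclude $f_2(\rho)\le\bar f_2(\rho)$ for every $\rho\in\cB_k$. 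Here $1-\tfrac2k+\normC^2\ge(1-1/k)^2>0$, so $\bar f_2$ is finite and continuous throughout, in particular continuous in $d$.

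Putting these together, $\lim_{n\to\infty}\tfrac1n\ln\Erw\brk{Z^{(2)}_{k,\rho_n}(\gnm)}=f_2(\rho_0)\le\bar f_2(\rho_0)$, where $\bar f_2$ is evaluated at the limiting density $d'$ (equivalently at $d(n)$, which makes no difference in the limit). I do not expect any genuine obstacle: the substantive content is the one-line Cauchy--Schwarz bound $\normA^2,\normB^2\ge\tfrac1k$, and the only care needed is the bookkeeping in the boundary case where the logarithm's argument degenerates (handled by the vacuous-bound convention) and the remark that $\ln$ in the statement is to be understood as $\tfrac1n\ln$.
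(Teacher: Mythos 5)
Your proposal is correct, but it reaches the conclusion by a genuinely different route than the paper. The paper's own (very terse) proof cites \Lem~\ref{Lemma_f2bar}, i.e.\ the expansion showing $f_2(\rho)=\bar f_2(\rho)+O(\w^2/n)$, which only applies to $\rho\in\Bw$ (marginals within $\w/\sqrt n$ of uniform), and then appeals to the uniform continuity of $\bar f_2$; implicitly it is tailored to the nearly balanced overlaps $\rho\in\Bwni$ for which the lemma is actually invoked in the proof of \Prop~\ref{Prop_AcoVil}. You instead prove the \emph{global pointwise} inequality $f_2(\rho)\le\bar f_2(\rho)$ on all of $\cB_k$: since $\normA^2,\normB^2\ge 1/k$ by Cauchy--Schwarz and $d>0$, the logarithm's argument in $f_2$ is dominated by that in $\bar f_2$, and combining this with \eqref{eq_erw_Z_rho} and continuity of $f_2$ at $\rho_0$ (the degenerate case where the argument vanishes only makes the left-hand side $-\infty$, so the bound holds trivially there rather than being ``vacuous'' --- a harmless wording slip) gives $\lim_n \frac1n\ln\Erw\brk{Z^{(2)}_{k,\rho_n}(\gnm)}=f_2(\rho_0)\le\bar f_2(\rho_0)$. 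What your route buys is generality and robustness: no balancedness hypothesis on $\rho_n$ is needed, and the argument is a one-line convexity bound instead of the perturbative estimate of \Lem~\ref{Lemma_f2bar}; what the paper's route buys is the stronger conclusion of asymptotic \emph{equality} of the two exponents on $\Bw$, which it has already established anyway for the second-moment computation. You also correctly note that the statement must be read with the $\frac1n$ normalisation and that replacing $d=2m/n$ by its limit $d^{\prime}$ is immaterial; both points are consistent with the paper's intent.
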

\begin{proof}
	\Lem~\ref{Lemma_f2bar} yields that 
		\begin{align*}
		\exp\brk{nf_2(\rho)} \sim \exp\brk{n\bar f_2(\rho)+O\br{\w^2}}.
		\end{align*}
		Together with the uniform continuity of $\bar f_2$ this proves the assertion. 
\end{proof}

We use results from \cite{Danny} where an analysis of $\bar f_2$ was performed. The following lemma summarizes this analysis from~\cite[\Sec~4]{Danny}. The same result was used in \cite{aco_plantsil}.

\begin{lemma}\label{Lemma_Dan}
	For any $c>0$, there is $k_0>0$ such that for all $k>k_0$ 
	and all $d$ such that $(2k-1)\ln k-c\leq d^{\prime} \leq(2k-1)\ln k$ the following statements are true.
	\begin{enumerate}
		\item If $1\leq s<k$, then for all separable $s$-stable $\rho\in\bar\cB_k$ we have $\bar f_2(\rho)<\bar f_2(\bar\rho)$.
		\item If $\rho\in\bar\cB_k$ is $0$-stable and $\rho\neq\bar\rho$, then $\bar f_2(\rho)<\bar f_2(\bar\rho)$.
		\item If $d'=(2k-1)\ln k-2$, then for all separable, $k$-stable $\rho\in\bar\cB_k$ 
		we have $\bar f_2(\rho)<\bar f_2(\bar\rho)$.
	\end{enumerate}
\end{lemma}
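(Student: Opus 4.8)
This lemma summarises the analysis of the function $\bar f_2$ on $\bar\cB_k$ carried out in \cite[\Sec~4]{Danny} (and reused in \cite{aco_plantsil}); the plan is to read off each of the three assertions from that analysis and to recall why the argument splits in this way. Throughout one uses $\cH(\bar\rho)=2\ln k$, that $1-2/k+\|\bar\rho\|_2^2=(1-1/k)^2$ so that $\bar f_2(\bar\rho)=2f_1(\rho^\star)$, and that $d^{\prime}\le(2k-1)\ln k$ gives $d<(k-1)^2$ once $k\ge k_0$ and $n$ is large.

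For the $0$-stable claim~(2), the point is that $0$-stability forces every entry of $\rho$ to be at most $0.51/k$, so that $\rho$ ranges over a compact neighbourhood of $\bar\rho$ on which $\bar f_2$ is smooth. A second-order expansion about $\bar\rho$, of the type used in~\eqref{eq_exp_f}, shows that the Hessian of $\bar f_2$ restricted to $\bar\cB_k$ equals $-D(d,k)$ times the identity with $D(d,k)=k^2(1-d/(k-1)^2)>0$, so $\bar\rho$ is a strict local maximum; that $\bar f_2$ stays strictly below $\bar f_2(\bar\rho)$ on the rest of the $0$-stable region is then the Achlioptas--Naor type estimate of \cite{AchNaor,Danny}.

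For the $s$-stable claims with $1\le s\le k$ one first uses separability to fix the shape of $\rho$: each of the $s$ entries exceeding $0.51/k$ in fact exceeds $(1-\kappa)/k$, and since every row and column of $\rho\in\bar\cB_k$ sums to $1/k$, these entries occupy $s$ distinct rows and $s$ distinct columns, while the remaining mass $(k-s)/k$ is carried, up to $O(\kappa)$, by the complementary $(k-s)\times(k-s)$ block. One then maximises $\bar f_2$ over this restricted family. The delicate feature, and the reason a cruder estimate does not suffice, is that the entropy term $\cH(\rho)$ and the term $\tfrac d2\ln(1-2/k+\|\rho\|_2^2)$ must be kept together: shifting mass off the large entries increases $\cH(\rho)$ but decreases $\|\rho\|_2^2$ by cancelling amounts, so the two contributions to $\bar f_2$ largely offset. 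Carrying out this optimisation, \cite[\Sec~4]{Danny} shows that for $1\le s<k$ the maximum of $\bar f_2$ over separable $s$-stable $\rho\in\bar\cB_k$ is strictly below $\bar f_2(\bar\rho)$ for every $d^{\prime}$ in $[(2k-1)\ln k-c,(2k-1)\ln k]$, which is~(1).

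The case $s=k$, claim~(3), is the main obstacle, and it is the reason the lemma restricts to the single density $d^{\prime}=(2k-1)\ln k-2$: there $\rho$ is, up to perturbations of size $\kappa/k$, a scaled permutation matrix, and $\bar f_2(\rho)$ differs from $\bar f_2(\bar\rho)$ by a difference of quantities that agree to leading order in $k$. The sign of this difference is precisely the quantity controlled by the second-moment computation that locates the condensation threshold $\dc$ in \cite{Cond,Danny}, and $d^{\prime}=(2k-1)\ln k-2<\dc$ is exactly the regime in which it delivers a strict inequality. I would invoke that computation as an external input rather than reprove it here.
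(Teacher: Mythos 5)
Your proposal is correct and takes essentially the same route as the paper: the paper gives no independent proof of this lemma, stating it explicitly as a summary of the analysis of $\bar f_2$ in \cite[\Sec~4]{Danny} (as also used in \cite{aco_plantsil}), which is exactly what you do by deferring each of the three claims to that external analysis. Your additional sketches of the $0$-stable, $s$-stable and $k$-stable cases are reasonable exposition but carry no proof burden beyond the citation, matching the paper's treatment.
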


\begin{proof}[Proof of~\Prop~\ref{Prop_AcoVil}]
Assume that $k\geq k_0$ for a large enough number $k_0$ and that $d^{\prime} \geq2(k-1)\ln(k-1)$.
We consider two different cases.
\begin{description}
\item[Case 1: $d^{\prime} \leq(2k-1)\ln k \kern-.2ex -\kern-.2ex 2$]
 Let $\Ztame$ be the number of $(\w,n)$-balanced separable $k$-colourings of $\gnm$.
 Then \Lem~\ref{Lemma_sep} implies that 
$ \Erw[ \Ztame(\gnm) ] \sim \Erw \brk{\Zwni(\gnm)}$.
Furthermore, in the case that $d^{\prime} =(2k-1)\ln k-2$, the combination of the statements of \Lem~\ref{Lemma_Dan} imply that
$\bar f_2(\rho)<\bar f_2(\bar\rho)$ for any separable $\rho\in\bar \cB_k\setminus\cbc{\bar\rho}$.
As $\bar f_2(\rho)$ is the sum of the concave function $\rho\mapsto \cH(\rho)$ and the convex function $\rho\mapsto\frac d2\ln(1-2/k\norm\rho_2^2)$,
this implies that, in fact,
for any $d'\leq(2k-1)\ln k-2$ we have  $\bar f_2(\rho)<\bar f_2(\bar\rho)$ for any separable $\rho\in\bar \cB_k\setminus\cbc{\bar\rho}$.
	Hence, the uniform continuity of $\bar f_2$ on $\cB_k$ 
	and \eqref{eq_erw_Z_rho} yield
		\begin{align}\label{eq_Prop_aco_vil_1}
		\Erw[\Zwni(\gnm)^2]\leq(1+o(1))
			\sum_{ \substack{\rho \in \Bwni\\\rho\mbox{\ \scriptsize is $0$-stable}}}
				\Erw \brk{\Zrho^{(2)}(\gnm)}.
		\end{align}
	Additionally, as $\bar \cB_k$ is a compact set, with the second statement of \Lem~\ref{Lemma_Dan} it follows that for any $\eta>0$ there exists $\eps>0$ such that 
	\begin{align}\label{eq_Dan_1}
	\max_{ \substack{\rho \in \Bwni\\\rho\mbox{\ \scriptsize is $0$-stable}\\\norm{\rho-\bar\rho}_2>\eta}}\exp\brk{n\bar f_2(\rho)} \le \exp\brk{n(\bar f_2(\bar \rho)-\eps)}.
	\end{align} 
	As on the other hand it holds that
	\begin{align}\label{eq_Dan_2}
	\Erw\brk{\Zwnie(\gnm)} \ge \exp\brk{n\bar f_2(\bar \rho)}/\mathrm{poly}(n),
	\end{align}
	combining~\eqref{eq_Dan_1} and \eqref{eq_Dan_2} with \eqref{eq_erw_Z_rho} and the observation that $|\Bwni|\le n^{k^2}$, we see that for any $\eta>0$,
			\begin{align}\label{eq_Prop_aco_vil_2}
			\sum_{ \substack{\rho \in \Bwni\\\rho\mbox{\ \scriptsize is $0$-stable}\\\norm{\rho-\bar\rho}_2>\eta}}
				\Erw \brk{\Zrho^{(2)}(\gnm)}&\leq
					\sum_{ \substack{\rho \in \Bwni\\\rho\mbox{\ \scriptsize is $0$-stable}\\\norm{\rho-\bar\rho}_2>\eta}}
				\exp\brk{n\bar f_2(\rho)+O(\ln n)}=o\br{\Erw\brk{\Zwnie(\gnm)}}.
		\end{align}
\item[Case 2: $(2k-1)\ln k-2<d^{\prime} <\dc$]
	 For an appropriate $\eps>0$ let $\Ztame$ be the number of $(\w,n)$-balanced separable $k$-colourings $\sigma$ of $\gnm$
	 such that $|\cC(\gnm,\sigma)|\leq\Erw\brk{\Zwni(\gnm)}/\exp\brk{\eps n}$.
	 Then \Lem s~\ref{Lemma_sep} and~\ref{Lemma_Clustersize} imply that
	 		$ \Erw[ \Ztame(\gnm) ] \sim \Erw \brk{ \Zwni(\gnm)}$.
	Furthermore, the 
	first part of \Lem~\ref{Lemma_Dan} and equation \eqref{eq_erw_Z_rho} entail that (\ref{eq_Prop_aco_vil_1}) holds for this random variable $\Ztame$.
	Moreover, as in the previous case
	(\ref{eq_Dan_1}), (\ref{eq_Dan_2}), \eqref{eq_erw_Z_rho} and the second part of \Lem~\ref{Lemma_Dan} show that~(\ref{eq_Prop_aco_vil_2}) holds true for any fixed $\eta>0$.
\end{description}
In either case the assertion follows by combining~(\ref{eq_Prop_aco_vil_1}) and~(\ref{eq_Prop_aco_vil_2}).
\end{proof}
 \begin{proof}[Proof of \Prop~\ref{Prop_first_mom_tame_bal}]
 	The assertion is obtained by combining \Prop~\ref{Prop_first_mom} with \Prop s~\ref{Prop_AcoVil},~\ref{Prop_second_mom_new_v} and \ref{Prop_second_mom_balanced_exact}.
 \end{proof}

\subsubsection*{Acknowledgements}
I thank my PhD advisor Amin Coja-Oghlan for constant support and valuable suggestions and Samuel Hetterich for helpful discussions.

\appendix
\section{}\label{appendix}

\begin{proof}[Proof of \Cor~\ref{Cor_first_moment_tame_bal}] This proof is a close adaption of the analogous proof in \cite{aco_plantsil}. We let $\cE$ denote the event $\cbc{\forall 2 \leq l\leq L:C_{l,n}=c_l}$ and fix $s \in \Swn$. Let $\cZ_n=\Ztame(\gnm)$ for the sake of brevity.
	Since $\cZ_n\leq\Zwni(\gnm)$, equation (\ref{eq_first_moment_tame_bal}) yields the upper bound
	\begin{align}\label{eq_cor_first_moment}
	\frac{\Erw\brk{\cZ_n|\cE}}{\Erw\brk{\cZ_n}}
	&\leq \frac{\Erw\brk{\Zwni(\gnm)|\cE}}{(1+o(1))\Erw\brk{\Zwni(\gnm)}}
	\sim\prod_{l=2}^L\brk{1+\delta_l}^{c_l}\exp\brk{-\delta_l\lambda_l}.
	\end{align}
	We show the following matching lower bound:
	\begin{align}\label{eq_Cor_count}
	\Erw\brk{\cZ_n|\cE}\geq(1-o(1))\Erw\brk{\Zwni(\gnm)|\cE}. 
	\end{align}
	Indeed, assume for contradiction that~(\ref{eq_Cor_count}) is false. Then we can find an $n$-independent $\eps>0$ such that for infinitely many~$n$,
	\begin{align}\label{eq_Cor_count_1}
	\Erw\brk{\cZ_n|\cE}<(1-\eps)\Erw\brk{\Zwni(\gnm)|\cE}. 
	\end{align}
	By Fact~\ref{Fact_cyc} there exists an $n$-independent $\xi=\xi(c_2,\ldots,c_L)>0$ such that $\pr\brk{\cE}\geq\xi$.
	Hence, (\ref{eq_Cor_count_1}) and Bayes' formula imply that
	\begin{align}\label{eq_Cor_count_2}
	\nonumber \Erw\brk{\cZ_n}&=\Erw\brk{\cZ_n|\cE}\pr\brk \cE+\Erw\brk{\cZ_n|\neg \cE}\pr\brk{\neg \cE}\\
	&\leq(1-\eps)\Erw\brk{\Zwni(\gnm)|\cE}\pr\brk \cE+\Erw\brk{\Zwni(\gnm)|\neg \cE}\pr\brk{\neg \cE}\nonumber\\
	&\leq\Erw[\Zwni(\gnm)]-\eps\xi\cdot\Erw[\Zwni(\gnm)|\cE]\nonumber\\
	&=\Erw[\Zwni(\gnm)]\cdot\bc{1+o(1)-\eps\xi\prod_{l=2}^L[1+\delta_l]^{c_l}\exp\brk{-\delta_l\lambda_l}}\nonumber\\
	&=(1-\Omega(1))\Erw\brk{\Zwni(\gnm)},
	\end{align}
	where the last equality holds since $\delta_l,\lambda_l$ and $c_l$ remain fixed as $n\to\infty$. As~(\ref{eq_Cor_count_2}) contradicts~(\ref{eq_first_moment_tame_bal}), we have established~(\ref{eq_Cor_count}).
	Finally, combining~(\ref{eq_Cor_count}) with~(\ref{eq_CondRatio1stMom}) and~(\ref{eq_first_moment_tame_bal}), we get
	\begin{align}\label{eq_Cor_count_3}
	\frac{\Erw\brk{\cZ_n|S}}{\Erw\brk{\cZ_n}}
	&\geq \frac{(1-o(1))\Erw\brk{\Zwni(\gnm)|S}}{(1+o(1))\Erw\brk{\Zwni(\gnm)}}\sim
	\prod_{l=2}^L\brk{1+\delta_l}^{c_l}\exp\brk{-\delta_l\lambda_l},
	\end{align}
	and the assertion follows from~(\ref{eq_cor_first_moment}) and~(\ref{eq_Cor_count_3}).
\end{proof}

\subsection{Calculating the first moment}\label{Sec_appendix_first}
The following proofs are very close to analogous proofs in \cite{aco_plantsil}.	

\begin{proof}[Proof of \Lem~\ref{Lem_first_mom_balanced}]

	As the edges in $\gnm$ are independent by construction, the expected number of $k$-colourings with colour density $\rho$ is given by
	\begin{equation}\label{eq_proof_first_moment_balanced_1}
	\Erw\brk{\Zrho(\gnm)}= {n \choose \rho_1 n , \dots, \rho_k n }\bc{1-\frac1N\sum_{i=1}^k\bink{\rho_in}{2}}^m,\quad\mbox{where }N=\bink n2.
	\end{equation}
	Further, the number of forbidden edges is given by
	\begin{equation*} 
	\sum_{i=1}^k { \rho_{i } n \choose 2} = N \br{\sum_{i=1}^k \rho_{i }^2}+ \frac{n}{2} \br{\sum_{i=1}^k \rho_{i }^2-1}+ O(1)
	\end{equation*}
	and thus
	\begin{align}\label{eq_proof_first_moment_balanced_2}
	\nonumber m \ln \br{1- \frac{1}{N} \sum_{i=1}^k { \rho_{i } n \choose 2}}& = m \ln \brk{\br{1 + \frac{n}{2N}} \br{1- \sum_{i=1}^k \rho_{i }^2}}+ o(1)\\ 
	& = n \frac{d}{2} \ln\br{1-\sum_{i=1}^k \rho_i^2} + \frac{d}{2} + o(1).
	\end{align}
	
	Equation \eqref{eq_first_mom_balanced_1} follows from (\ref{eq_proof_first_moment_balanced_1}), (\ref{eq_proof_first_moment_balanced_2}) and Stirling's formula.
	Moreover, (\ref{eq_first_mom_balanced_2}) follows from~(\ref{eq_proof_first_moment_balanced_1}) 
	and~(\ref{eq_proof_first_moment_balanced_2}) because $\norm{\rho-\rho^\star}_2=o(1)$ implies that $\sum_{i=1}^k\rho_i^2\sim1/k$ and
	\begin{equation*} 
	{ n \choose \rho_1 n , \dots, \rho_k n } \sim \br{{2 \pi n}}^{\frac{1-k}{2}} k^{k/2} \exp\brk{n \cH(\rho)}.
	\end{equation*}
\end{proof}

\begin{proof}[Proof of \Cor~\ref{Cor_first_mom_total}]
	The functions  $\rho\in\cA_k \mapsto \cH(\rho)$ and $\rho\in\cA_k \mapsto \frac{d}{2} \ln ( 1 - \sum_{i=1}^k \rho_i^2 )$
	are both concave and attain their maximum at $\rho = \rho^\star$.
	Consequently, setting
	$B(d,k) = k(1 + \frac{d}{k-1})$ and expanding around $\rho = \rho^\star$, we obtain
	\begin{align}\label{eq_proof_fm_tot}
	f_1\br{\rho^\star} - \frac{B(d,k)}{2} \| \rho - \rho^\star \|_2^2 - O\br{\| \rho - \rho^\star \|_2^3} \leq  f_1(\rho) \leq f_1\br{\rho^\star}  - \frac{B(d,k)}{2} \| \rho - \rho^\star \|_2^2. 
	\end{align}
	Plugging the upper bound from~(\ref{eq_proof_fm_tot}) into~(\ref{eq_first_mom_balanced_1}) and observing that $|\cA_k(n)|\leq n^{k}=\exp\brk{o(n)}$, we find
	
	\begin{align}\label{eq_proof_fm_tot_2}
	S_1=\sum_{\substack{ \rho \in \cA_k(n) \\\| \rho - \rho^\star \|_2 > n^{-3/8}} } 
	\Erw \brk{\Zrho(\gnm)} \leq C_2   \exp \brk{ f_1\br{\rho^\star}} \exp \brk{- \frac{B(d,k)}{2} n^{1/6}}. 
	\end{align}
	On the other hand, (\ref{eq_first_mom_balanced_2}) implies that
	\begin{align} \label{eq_proof_fm_tot_3} 
	\nonumber 	S_2&=\sum_{\substack{ \rho \in \cA_k(n) \\\| \rho - \rho^\star \|_2 \leq n^{-3/8}} } \Erw \brk{\Zrho(\gnm)}
	\sim  \sum_{\substack{ \rho \in \cA_k(n) \\
			\| \rho - \rho^\star \|_2 \leq n^{-3/8}} } \br{{2 \pi n}}^{\frac{1-k}{2}} k^{k/2} \exp\brk{d/2}  \exp \brk{n f_1(\rho)}\\ 
	& \sim \br{{2 \pi n}}^{\frac{1-k}{2}} k^{k/2} \exp\brk{d/2+ n f_1\br{\rho^\star}}		 	\sum_{\substack{ \rho \in\cA_k(n)}} \exp \brk{-n \frac{B(d,k)}{2} \| \rho - \rho^\star \|_2^2}. 
	\end{align}
	The last sum is nearly in the standard form of a Gaussian summation, merely the vectors $\rho\in\cA_k(n)$ that we sum over are subject to the linear constraint $\rho_1+\cdots+\rho_k=1$.
	We rid ourselves of this constraint by substituting $\rho_k=1-\rho_1-\cdots-\rho_{k-1}$.
	Formally, let $J$ be the $(k-1)\times(k-1)$-matrix whit diagonal entries equal to $2$ and remaining entries equal to $1$. We observe that $\det J=k$. Then 
	\begin{align}\label{eq_proof_fm_tot_4}
	\nonumber \sum_{\substack{\rho \in \cA_k(n)}} \exp\brk{-n \frac{B(d,k)}{2} \| \rho - \rho^\star \|_2^2}   
	&\sim 	\sum_{y \in \frac1n\mathbb{Z}^k} \exp \brk{-n \frac{B(d,k)}{2}\scal{Jy}y}\\
	&\sim\br{2 \pi n}^{\frac{k-1}{2}} k^{-\frac{k}{2}}\br{1 + \frac{d}{k-1}}^{- \frac{k-1}{2}}.
	\end{align}
	Plugging~\eqref{eq_proof_fm_tot_4} into \eqref{eq_proof_fm_tot_3}, we obtain
	\begin{align}\label{eq_proof_fm_tot_5} 
	\nonumber    S_2 &\sim \br{{2 \pi n}}^{\frac{1-k}{2}} k^{k/2} \exp \brk{d/2+ n f_1(\rho^\star)} \br{2 \pi n}^{\frac{k-1}{2}} k^{- \frac{k}{2}} \br{1 + \frac{d}{k-1}}^{- \frac{k-1}{2}}\\
	&=\exp \brk{d/2+ n f_1(\rho^\star)}\br{1 + \frac{d}{k-1}}^{- \frac{k-1}{2}}.
	\end{align}
	Finally, comparing~(\ref{eq_proof_fm_tot_2}) and~(\ref{eq_proof_fm_tot_5}), 
	we see that $S_1=o(S_2)$.
	Thus, $\Erw [ Z_k(\gnm)] =S_1+S_2\sim S_2$, and the assertion follows from~(\ref{eq_proof_fm_tot_5}).
\end{proof}

\subsection{Calculating the second moment}\label{Sec_appendix_second}
The following proofs are very close to analogous proofs in \cite{aco_plantsil}.	

\begin{proof}[Proof of \ref{Fact_varZ}]
	To calculate the expected number of pairs of colourings $\sigma, \tau$ with overlap $\rho \in \cB_k(n)$, we first observe that
	\begin{align*}
	\pr\brk{\sigma, \tau \textrm{ are $k$-colourings of $\gnm$}}=\br{1-\frac{\cF(\sigma, \tau)}{N}}^m,
	\end{align*}
	where $\cF(\sigma, \tau)$ is the number of ``forbidden'' edges joining two vertices with the same colour under either $\sigma$ or $\tau$ and  $N=\binom{n}{2}$. We have
	\begin{align*}
	\cF(\sigma, \tau) &= \sum_{i=1}^k { \rho_{i \star} n \choose 2}+ \sum_{j=1}^k { \rho_{\star j} n \choose 2} - \sum_{i,j=1}^k { \rho_{ij} n \choose 2} 
	\\ & = N \br{\sum_{i=1}^k \rho_{i \star}^2 + \sum_{j=1}^k \rho_{\star j}^2 - \sum_{i,j = 1}^k \rho_{ij}^2} + \frac{n}{2} \br{\sum_{i=1}^k \rho_{i \star}^2+\sum_{j=1}^k \rho_{\star j}^2 - \sum_{i,j=1}^k \rho_{ij}^2-1}+ O(1)
	\end{align*}
	and thus, the probability that $\sigma$ and $\tau$ are both colourings of $\gnm$ only depends on their overlap $\rho$ and is given by
	\begin{align}\label{eq_Z1} 
	\pr\brk{\sigma, \tau \textrm{ are $k$-colourings of $\gnm$}}
	& \sim  \exp\brk{m \ln \br{1 - \sum_{i=1}^k  \rho_{i \star}^2- \sum_{j=1}^k  \rho_{\star j}^2  + \sum_{i,j=1}^k \rho_{ij}^2} + \frac{d}{2}}.
	\end{align}
	
	It remains to multiply this by the total number of $\sigma, \tau$ with overlap $\rho \in \cB_k(n)$. By Stirling's formula, this number is given by
	\begin{align} \label{eq_Z2}
	{ n \choose \rho_{11} n, \dots, \rho_{kk} n} \sim \sqrt{2 \pi} n^{- \frac{k^2-1}{2}} \br{\prod_{i,j} \frac{1}{\sqrt{2 \pi \rho_{ij}}}} \exp\brk{n  \cH(\rho)}.
	\end{align}
	
	Equation (\ref{eq_Zrho}) is obtained by combining (\ref{eq_Z1}) and (\ref{eq_Z2}). To prove (\ref{eq_Z_om}), we observe that if $\|\rho-\bar \rho\|_2^2=o(1)$, we have
	\begin{align*}
	\frac{\sqrt{2 \pi} n^{\frac{1-k^2}{2}}} {\prod_{i,j=1}^k \sqrt{2 \pi \rho_{ij}}} \sim k^{k^2} \br{2 \pi n}^{\frac{1-k^2}{2}}
	\end{align*}
	and the statement follows.
\end{proof}	

\begin{proof}[Proof of \Lem~\ref{Lem_hessian_wormald}] 
	Together with the Euler-Maclaurin formula and \Lem~\ref{Lemma_WormaldMatrix}, a Gaussian integration yields
	\begin{align*}
	\sum_{\epsilon \in S_n} \exp &\brk{- n \frac{D}{2} \| \epsilon\|_2^2 + o(n^{1/2}) \| \epsilon\|_2}=
	\sum_{\epsilon \in \left( \mathbb{Z}/n \right)^{(k-1)^2} }
	\exp \brk{- n \frac{D}{2}\sum_{i,j,i',j'\in[k-1]}\cH_{(i,j),(i',j')}\eps_{ij}\eps_{i'j'} + o(n^{1/2}) \| \epsilon \|_2} \\ 
	& \sim n^{(k-1)^2} \int \dots  \int 
	\exp \brk{- n \frac{D}{2}\sum_{i,j,i',j'\in[k-1]}\cH_{(i,j),(i',j')}\eps_{ij}\eps_{i'j'}} 
	\mathrm d\eps_{11}\cdots\mathrm d\eps_{(k-1)(k-1)}
	\\ & \sim \left( \sqrt{2 \pi n} \right)^{(k-1)^2} D^{\frac{-(k-1)^2}{2}} (\det\cH)^{-1/2} 
	\sim \left( \sqrt{2 \pi n} \right)^{(k-1)^2} D^{\frac{-(k-1)^2}{2}} k^{-(k-1)} %
	, \end{align*} 
	as desired.
\end{proof}		

\subsection{Counting short cycles}\label{Sec_Short_cyc}

\noindent
In this section we count the number of cycles of a short fixed length in order to prove \Prop~\ref{Prop_CondRatio1stMom}. The results in this section were already obtained in \cite{aco_plantsil} and the proofs are a very close adaption of the ones in \cite{aco_plantsil}. We recall that for $l=2,\ldots,L$ we denoted by $C_{l,n}$ the number of cycles of length exactly $l$ in $\gnm$. We let $c_2,\ldots,c_L$ be a sequence of non-negative integers and $\cE$ the event that $C_{l,n}=c_l$ for $l=2,\ldots,L$. We recall $\lambda_l,\delta_l$ from~(\ref{eq_lambdelt}). For a map $\sigma:[n]\mapsto [k]$, we define $\cV(\sigma)$ as the event that $\sigma$ is a $k$-colouring of the random graph $\gnm$. Our starting point is the following lemma concerning the distribution of the random variables $C_{l,n}$ given $\cV(\sigma)$.

\begin{lemma}\label{Lem_planted_cyc}
	Let $\mu_l=\frac{d^l}{2l}\brk{1+\frac{(-1)^l}{(k-1)^{l-1}}}$.
	Then
	$\pr[\cE|\cV(\sigma)] \sim \prod_{l=2}^{L}\frac{\exp\brk{-\mu_l}}{c_l!}\mu_l^{c_l}$ for any $\sigma$ with $\rho(\sigma)\in\Aw$.
\end{lemma}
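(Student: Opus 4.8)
The plan is to compute the conditional probability $\pr[\cE\mid\cV(\sigma)]$ by switching to the \emph{planted model}: fix a $\sigma$ with $\rho(\sigma)\in\Aw$, condition on $\sigma$ being a proper $k$-colouring of $\gnm$, and observe that under this conditioning each of the $m$ edges is drawn uniformly and independently from the set of \emph{bichromatic} pairs $\{u,w\}$ with $\sigma(u)\neq\sigma(w)$. The number of such bichromatic pairs is $M_\sigma=\binom n2-\sum_{i=1}^k\binom{\rho_i(\sigma)n}{2}=\frac{n^2}{2}\bigl(1-\sum_i\rho_i(\sigma)^2\bigr)+O(n)$, and since $\rho(\sigma)\in\Aw$ we have $\sum_i\rho_i(\sigma)^2=\frac1k+O(\w^2/n)$, so $M_\sigma\sim\frac{n^2}{2}(1-1/k)$ uniformly over the relevant $\sigma$. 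Thus $\gnm$ conditioned on $\cV(\sigma)$ is the multigraph obtained by throwing $m$ edges uniformly into the complete bipartite-like host graph $\cK_\sigma$ on $[n]$ with edge set all bichromatic pairs.

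First I would set up the first/second moment (factorial moment) computation for the cycle counts in this conditioned model. For a fixed potential cycle $\gamma$ of length $l$ on a specified ordered set of $l$ vertices, all of whose $l$ edges are bichromatic under $\sigma$, the probability that all $l$ of its edges appear among the $m$ sampled edges is asymptotically $(m/M_\sigma)^l\cdot l!\,/(\text{overcount})$; more precisely $C_{l,n}$ is, in the limit, a sum of indicators that is amenable to the method of moments exactly as in the proof of Fact~\ref{Fact_cyc}. The only new ingredient is counting, among all $\frac{(n)_l}{2l}$ potential $l$-cycles, the \emph{expected} number whose vertex colours (under $\sigma$) form a closed bichromatic walk. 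Since $\rho(\sigma)\approx\rho^\star$, the colours along a uniformly random $l$-cycle behave like an $l$-step simple random walk on $K_k$ (the complete graph on the $k$ colours with the no-repeat constraint), and the probability that such a walk closes up is the $(l)$-th relevant entry, which is $k^{-1}\bigl((k-1)^{l}+(k-1)(-1)^{l}\bigr)/(k-1)^{l}=(k-1)^{-(l-1)}\cdot\frac{(k-1)^{l}+(k-1)(-1)^{l}}{k(k-1)}$; cleaning this up gives the factor $1+\frac{(-1)^l}{(k-1)^{l-1}}$ (this is the standard eigenvalue computation: the transition matrix $\tfrac1{k-1}(J-I)$ on $k$ states has eigenvalue $1$ once and $-\tfrac1{k-1}$ with multiplicity $k-1$, so the return probability of the non-backtracking/proper walk of length $l$ is $\tfrac1k+\tfrac{k-1}{k}\bigl(\tfrac{-1}{k-1}\bigr)^l$, and dividing by the unconditional closure probability $1/(k-1)$ — wait, rather: the count of proper closed walks relative to all proper walks — yields exactly $1+(-1)^l(k-1)^{1-l}$). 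Combining, the expected number of $l$-cycles in $\gnm$ given $\cV(\sigma)$ tends to
\[
\mu_l=\frac{d^l}{2l}\left(1+\frac{(-1)^l}{(k-1)^{l-1}}\right),
\]
and the same computation for disjoint unions of cycles shows the joint factorial moments factorize.

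Then I would invoke the standard Poisson approximation via the method of moments: the $(C_{l,n})_{2\le l\le L}$ converge jointly in distribution, given $\cV(\sigma)$, to independent Poisson variables $\Po(\mu_l)$ — this is verbatim the argument behind Fact~\ref{Fact_cyc} (Bollob\'as, \Thm~5.16) but carried out in the conditioned (planted) model, and since $L$ is a fixed constant while $n\to\infty$ the finitely many moments suffice. This yields $\pr[\cE\mid\cV(\sigma)]\sim\prod_{l=2}^L\frac{\mu_l^{c_l}}{c_l!}\eul^{-\mu_l}$, which is the claim. I expect the main obstacle to be the bookkeeping in the moment computation for unions of cycles: one must check that overlapping configurations contribute negligibly and that the leading term has exactly the claimed constant, and in particular that the $O(\w^2/n)$ slack from $\rho(\sigma)\in\Aw$ (rather than exactly balanced) does not affect the limit — it does not, because $\w=\w(n)$ grows slower than any power of $n$ and the cycle lengths $l\le L$ are bounded, so all the colour-profile corrections are $o(1)$ uniformly. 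The rest is routine and, as the excerpt notes, follows \cite{aco_plantsil} closely.
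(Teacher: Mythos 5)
Your proposal is correct and follows essentially the same route as the paper: condition on $\cV(\sigma)$ so that edges are uniform among bichromatic pairs, compute the conditional expected cycle counts (getting $\mu_l$ from the count of proper closed colour sequences of length $l$, which you obtain as $\mathrm{tr}\bigl((J-I)^l\bigr)=(k-1)^l+(-1)^l(k-1)$ via eigenvalues, where the paper derives the same number by the recurrence $T_l+T_{l-1}=k(k-1)^{l-1}$), show overlapping cycle configurations are negligible, and conclude by the method of moments for joint factorial moments. The only cosmetic difference is the eigenvalue versus recurrence computation of the type count, and the harmless misstatement that $\w$ depends on $n$ (it is fixed here, with $\w\to\infty$ taken only after $n\to\infty$), neither of which affects the argument.
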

\noindent

\begin{proof}
	All we have to show is that for any fixed sequence of integers $m_2, \ldots, m_L\geq0$, the joint factorial moments satisfy 
	\begin{align}\label{eq_JointFactMom}
	\Erw\brk{(C_{2,n})_{m_2}\cdots (C_{L,n})_{m_L}|\cV(\sigma)}\sim
	\prod_{l=2}^L \mu_l^{m_l}. 
	\end{align}
	Then \Lem~\ref{Lem_planted_cyc} follows from~\cite[\Thm~1.23]{Bollobas}.
	
	To establish \eqref{eq_JointFactMom}, we interpret $(C_{2,n})_{m_2}\cdots (C_{L,n})_{m_L}$ as the number of sequences of $m_2+\cdots+m_L$ distinct cycles such that 
	$m_2$ is the number of cycles of length $2$, and so on. 
	We let $Y$ be the number of those sequences of cycles such that any two cycles are vertex-disjoint and $Y'$ be the number of sequences having intersecting cycles. Obviously, we have 
	\begin{align}\label{eq_SplitToDisjointJoint}
	\Erw\brk{(C_{2,n})_{m_2}\cdots (C_{L,n})_{m_L}| \cV(\sigma)}=\Erw\brk{Y|\cV(\sigma)}+\Erw\brk{Y'|\cV(\sigma)}.
	\end{align}
	For $\Erw\brk{Y'|\cV(\sigma)}$, we use the following claim that we prove at the end of this section.
	
	\begin{claim}\label{Claim_OverlapCyc}
		It holds that $\Erw\brk{Y'|\cV(\sigma)}=O(n^{-1})$.
	\end{claim}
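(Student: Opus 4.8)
The plan is to bound $Y'$ by the standard first‑moment / subgraph‑counting argument that underlies \cite[\Thm~1.23]{Bollobas}, carried out exactly as in \cite{aco_plantsil}. Recall that $Y'$ counts the ordered $t$‑tuples of pairwise distinct cycles of $\gnm$, with $t=m_2+\cdots+m_L$ a fixed constant and prescribed length profile $(m_l)_{l\le L}$, that are \emph{not} pairwise vertex‑disjoint. I would classify such a tuple by the \emph{overlap pattern} it forms, i.e.\ by the isomorphism type of the graph $H=\bigcup_i C_i$ together with the data recording which vertices and edges belong to each of the $t$ cycles. Since every cycle has length at most $L$ and $t$ is constant, $H$ has at most $tL$ vertices, so there are only $O(1)$ possible overlap patterns; it then suffices to show that, for each fixed pattern, the conditional expected number of its occurrences given $\cV(\sigma)$ is $O(n^{-1})$, and to sum over the $O(1)$ patterns.

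For a single pattern I would combine two ingredients. The first is a conditional subgraph‑counting bound: since $\cV(\sigma)=\bigcap_{j=1}^m\{\vec e_j\text{ is bichromatic under }\sigma\}$ is an intersection of independent one‑edge events, conditionally on $\cV(\sigma)$ the edges $\vec e_1,\dots,\vec e_m$ are still i.i.d., now uniform on the set of $\bar N:=\binom n2-\sum_{i=1}^k\binom{|\sigma^{-1}(i)|}{2}$ bichromatic pairs; and because $\rho(\sigma)\in\Aw$ we have $\bar N=(1-1/k+o(1))\binom n2=\Theta(n^2)$ \emph{uniformly} over all $(\omega,n)$‑balanced $\sigma$. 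Hence for any fixed set $F$ of $e$ bichromatic pairs, $\pr[F\subseteq\{\vec e_1,\dots,\vec e_m\}\mid\cV(\sigma)]\le(m)_e\bar N^{-e}=O(n^{-e})$, using $m=O(n)$; summing over the at most $n^{v}$ injections of the vertex set shows that the conditional expected number of copies of a fixed graph on $v$ vertices and $e$ edges in $\gnm$ is $O(n^{v-e})$ (and the same holds for a multigraph pattern containing a double edge, each such edge forcing two sample slots and so contributing an extra factor $O(n^{-2})$ against only two vertices).

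The second ingredient is purely combinatorial: if the union $H$ of $t$ pairwise distinct cycles fails to be pairwise vertex‑disjoint, then $e_H\ge v_H+1$. Indeed, every component of $H$ is covered by cycles, hence contains a cycle, so $e\ge v$ componentwise; moreover two of the $C_i$ share a vertex and therefore lie in a common component $K$, and being distinct nonzero elements of the $\mathbb F_2$‑cycle space of $K$ they are linearly independent, so $K$ has cyclomatic number $\ge 2$ and contributes one additional edge. Combining the two ingredients, a pattern with union $H$ contributes $O(n^{v_H-e_H})=O(n^{-1})$ to $\Erw[Y'\mid\cV(\sigma)]$, and summing over the $O(1)$ patterns gives $\Erw[Y'\mid\cV(\sigma)]=O(n^{-1})$, as claimed.

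There is no serious obstacle here; the one thing that needs genuine care is the bookkeeping in the subgraph‑counting step — namely that the bound $\pr[\,\cdot\,\mid\cV(\sigma)]=O(n^{-e})$ is uniform over the $(\omega,n)$‑balanced $\sigma$, which is exactly where one uses $\bar N=\Theta(n^2)$ — together with the degenerate length‑$2$ case (double edges) in the multigraph model $\gnm$, where ``$C$ is a cycle of $\gnm$'' means the underlying pair is sampled at least twice; this only strengthens the codimension count. Everything else is routine and follows \cite{aco_plantsil} line by line.
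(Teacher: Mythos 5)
Your argument is correct, and it delivers the claim by a route that differs in implementation from the paper's. The paper does not classify overlap patterns: it notes that a tuple with intersecting cycles forces a vertex set $R$ of bounded size spanning at least $|R|+1$ edges, introduces the indicator $\mathbb{I}_R$ of this event and the event $H_L$ that some such $R$ exists, and then bounds $\pr\brk{H_L|\cV(\sigma)}$ by Markov's inequality together with a union bound over the $\binom{\binom{l}{2}}{l+1}$ placements of $l+1$ edges inside $R$, using the estimate $m/(N-\cF(\sigma))=O(n^{-1})$ uniformly over $(\w,n)$-balanced $\sigma$ --- exactly the uniformity you isolate via $\bar N=\Theta(n^2)$. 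Your version differs in two respects: (i) you enumerate the $O(1)$ isomorphism types of the union multigraph $H$ (with the cycle-membership data) and prove the excess-edge inequality $e_H\ge v_H+1$ via the $\mathbb{F}_2$-cycle space (two distinct cycles sharing a vertex are linearly independent, so their common component has cyclomatic number at least $2$), whereas in the paper this inequality is effectively built into the definition of $\mathbb{I}_R$; and (ii) you bound the conditional expectation $\Erw\brk{Y'|\cV(\sigma)}$ of the tuple count directly, pattern by pattern, while the paper's displayed computation bounds $\pr\brk{Y'>0|\cV(\sigma)}$ through the auxiliary event $H_L$; since the claim is stated for the expectation (and is used that way in \eqref{eq_SplitToDisjointJoint}), your route matches the statement more literally, the passage from the probability of $H_L$ to the expectation of $Y'$ being left implicit in the paper (it is harmless because the number of patterns and the multiplicities $m_2,\dots,m_L$ are fixed). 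Both proofs rest on the same two pillars --- conditionally on $\cV(\sigma)$ the $m$ edges are i.i.d.\ uniform on the $\Theta(n^2)$ bichromatic pairs, and intersecting short cycles create a bounded subgraph with more edges than vertices --- so they buy essentially the same bound; yours is more self-contained on the combinatorial side and handles the multigraph degeneracies explicitly, the paper's avoids any discussion of isomorphism types at the cost of a slightly coarser reduction.
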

	\noindent
	
	Thus, it remains to count the number of vertex disjoint cycles conditioned on $\cV(\sigma)$. The line of arguments we use is similar to \cite[\Sec~2]{Kemkes}. To simplify the calculations, we define $D_{l,n}$ as the number of rooted, directed cycles of length $l$ in $\gnm$, implying that $D_{l,n}=2lC_{l,n}$.
	
	For a rooted directed cycle $(v_1,\ldots,v_l)$ of length $l$, we call $(\sigma(v_1),\ldots,\sigma(v_l))$ the {\em type} of the cycle under $\sigma$. Let $D^t_{l,n}$ denote the number of rooted, directed cycles of length $l$ and type $t=(t_1,...,t_l)$.  
	We claim that
	\begin{align}\label{eq_disjoint_cyc}
	\Erw\brk{D^t_{l,n} |\cV(\sigma)}
	\sim\bcfr nk^l\frac{(m)_l}{(N-\Forb(\sigma))^l}\sim\bcfr{d}{k-1}^l\quad\mbox{with }N=\bink n2.
	\end{align}
	Indeed, as $\sigma$ is $(\w,n)$-balanced, the number of ways of choosing $l$ vertices $(v_1,\ldots,v_l)$ such that $\sigma(v_i)=t_i$ for all $i$ is $(1+o(1))(n/k)^l$ and each edge $\cbc{v_i,v_{i+1}}$ of the cycle is present in the graph with a probability asymptotically equal to $m / (N-\Forb(\sigma))$. This explains the first asymptotic equality in~(\ref{eq_disjoint_cyc}).
	The second one follows because $m=dn/2$ and $\Forb(\sigma)\sim N/k$.
	
	In particular, the r.h.s.\ of~(\ref{eq_disjoint_cyc}) is independent of the type $t$.
	For a given $l$, let $T_l$ signify the number of all possible types of cycles of length $l$.
	Thus, $T_l$ is the set of all sequences $(t_1,\ldots,t_l)$ such that $t_{i+1}\neq t_i$ for all $1\leq i<l$ and $t_l\neq t_1$.
	Let $T_1=0$.
	Then $T_l$ satisfies the recurrence 
	\begin{align}\label{eq_rec}
	T_l+T_{l-1}=k(k-1)^{l-1}.
	\end{align}
	To see this, observe that $k(k-1)^{l-1}$ is the number of all sequences
	$(t_1,\ldots,t_l)$ such that $t_{i+1}\neq t_i$ for all $1\leq i<l$.
	Any such sequence either satisfies $t_l\neq t_1$, which is accounted for by $T_l$, or $t_l=t_1$ and $t_{l-1}\neq t_1$,
	in which case it is contained in $T_{l-1}$. 
	
	Hence, iterating \eqref{eq_rec} gives $T_l=(k-1)^l+(-1)^l(k-1)$.
	Combining this formula with~\eqref{eq_disjoint_cyc}, we obtain
	\begin{align*} 
	\Erw\brk{D_{l,n} |\cV(\sigma)}&\sim T_l \cdot \Erw\br{D^t_{l,n} |\cV(\sigma)} 
	\sim d^{l}\br{1+\frac{(-1)^l}{(k-1)^{l-1}}}.
	\end{align*}
	Recalling that $C_{l,n}=D_{l,n}/(2l)$, we get
	\begin{align}\label{eq_disjoint_cyc_2}
	\Erw\brk{C_{l,n} |\cV(\sigma)}&\sim \frac{d^l}{2l}\br{1+\frac{(-1)^l}{(k-1)^{l-1}}}.
	\end{align}
	Since $Y$ considers only vertex disjoint cycles and $l$, $m_2,\ldots, m_L$ remain fixed as $n\ra\infty$, equation \eqref{eq_disjoint_cyc_2} yields
	\begin{align*}
	\Erw\brk{Y|\cV(\sigma)} \sim\prod_{l=2}^L \br{\frac{d^l}{2l}\br{1+\frac{(-1)^l}{(k-1)^{l-1}}}}^{m_l}.
	\end{align*}
	Plugging the above relation and Claim \ref{Claim_OverlapCyc} into (\ref{eq_SplitToDisjointJoint}), we get 
	(\ref{eq_JointFactMom}) and the assertion follows. 
\end{proof}

\begin{propositionproof}{ \ref{Prop_CondRatio1stMom}} 
	Let $s \in \Swn$. By Bayes' rule and \Lem~\ref{Lem_planted_cyc} we have
	\begin{align*}
	\Erw\brk{\Zwni(\gnm)| \cE}&=\frac{1}{\pr[\cE]}\sum_{\tau\in\Awni}\pr[\cV(\tau)] \pr\brk{\cE|\cV(\tau)} \\
	&\sim \frac{
		\prod_{l=2}^{L}\frac{\exp\brk{-\mu_l}}{c_l!}\mu_l^{c_l}}{\pr[\cE]}
	\sum_{\tau\in \Awni}\pr\brk{\cV(\tau)}\\
	&\sim \frac{\prod_{l=2}^{L}\frac{\exp\brk{-\mu_l}}{c_l!}\mu_l^{c_l}}{\pr[\cE]}\Erw\brk{\Zwni(\gnm)}. 
	\end{align*}
	From \Lem~\ref{Lem_planted_cyc} and Fact~\ref{Fact_cyc} we get that
	\begin{align*}
	\frac{\prod_{l=2}^{L}\frac{\exp\brk{-\mu_l}}{c_l!}\mu_l^{c_l}}{\pr[\cE]}\sim\prod_{l=2}^L\brk{1+\delta_l}^{c_l}\exp\brk{-\delta_l\lambda_l}, 
	\end{align*}
	whence \Prop~\ref{Prop_CondRatio1stMom} follows. 
\end{propositionproof}

\begin{claimproof}{\ref{Claim_OverlapCyc}}
	For every subset $R$ of $l\le L$ vertices, let  $\mathbb{I}_{R}$ be equal to 1 if the number of edges with both ends in $R$ is at least $|R|+1$. 
	Let $H_L$ be the event that $\{\sum_{R:|R|\leq L}\mathbb{I}_{R}>0\}$. By definition, if $Y'>0$ then the event $H_L$ occurs. This implies that
	\begin{align*}
	\pr\brk{Y'>0|\cV(\sigma)}\leq \pr\brk{H_L|\cV(\sigma)}.
	\end{align*}
	Thus, it suffices to appropriately bound $\pr[H_L|\cV(\sigma)]$. Markov's inequality yields 
	\begin{align*}
	\pr\brk{H_L|\cV(\sigma)}&\leq \Erw\brk{\sum_{R:|R|\leq L}\mathbb{I}_{R}|\cV(\sigma)}=\sum_{l=2}^L\sum_{R:|R|=l}\Erw\brk{\mathbb{I}_{R}|\cV(\sigma)}.
	\end{align*}
	For any set $R$ such that $|R|=l$, 
	we can put $l+1$ edges inside  the set in at most ${{l\choose 2}\choose l+1}$ ways.
	Clearly conditioning on $\cV(\sigma)$ can only reduce the  number of different placings of the edges.
	For a fixed set $R$ of cardinality $l$, we get, using inclusion/exclusion and the Binomial theorem as well as the fact that $\cF\bc{\sigma}\sim N/k$:
	\begin{align*}
	\Erw\brk{\mathbb{I}_{R}|\cV(\sigma)}&\leq {{l\choose 2}\choose l+1 }    
	{\sum_{i=0}^{l+1}{l+1 \choose i}(-1)^i\left(1-\frac{i}{N-\cF\bc{\sigma}}\right)^m}\\
	&\leq {{l\choose 2}\choose l+1} \br{\frac{m}{N-\cF\bc{\sigma}}}^{l+1}
	\sim {{l\choose 2}\choose l+1} \br{\frac{d}{n(1-1/k)}}^{l+1}.
	\end{align*}
	As ${i\choose j}\leq \left(ie/j\right)^j$, it follows that
	\begin{align*}
	\pr\brk{H_L|\cV(\sigma)} &\leq (1+o(1))\sum_{l=2}^L{n \choose l}{{l\choose 2}\choose l+1} \br{\frac{d}{n(1-1/k)}}^{l+1}\\
	&\leq(1+o(1)) \sum_{l=2}^L\br{\frac{ne}{l}}^l\br{\frac{le}{2}}^{l+1} \br{\frac{d}{n(1-1/k)}}^{l+1}\\
	&\leq \frac{1+o(1)}{n} \sum_{l=2}^L\frac{led}{2(1-1/k)}\br{\frac{e^2d}{2(1-1/k)}}^l  =O(n^{-1}),
	\end{align*}
	where the last equality holds since $L$ is a fixed number. The proves the claim.
\end{claimproof}

\end{document}